\patchcmd{\ttlh@hang}{\parindent\z@}{\parindent\z@\leavevmode}{}{}
\patchcmd{\ttlh@hang}{\noindent}{}{}{}
\newcommand\numberthis{\addtocounter{equation}{1}\tag{\theequation}}
\newtheorem{theorem}{Theorem}[section]
\newtheorem{lemma}[theorem]{Lemma}
\newtheorem{proposition}[theorem]{Proposition}
\newtheorem{corollary}[theorem]{Corollary}
\theoremstyle{definition}
\newtheorem{definition}[theorem]{Definition}
\newenvironment{example}
  {\pushQED{\qed}\examplex}
  {\popQED\endexamplex}
\theoremstyle{remark}
\newtheorem{remark}[theorem]{Remark}
\numberwithin{equation}{section}
\def\paragraph{\@startsection{paragraph}{4}%
  \z@\z@{-\fontdimen2\font}%
  {\normalfont\bfseries}}
\DeclareMathOperator*{\Co}{Co}
\DeclareMathOperator*{\loc}{loc}
\DeclareMathOperator*{\supp}{supp}
\DeclareMathOperator*{\esssup}{ess\,sup}
\newcommand{\SOdual}{\overline{\mathcal{S}}_{\mathcal{O}}'(\RR^d)}
\newcommand{\Schwartz}{\mathcal{S}}
\newcommand{\RR}{\mathbb{R}}
\newcommand{\CC}{\mathbb{C}}
\newcommand{\RHat}{\widehat{\mathbb{R}}}
\newcommand{\CoLw}{\Co (L^{p,q}_v(G))}
\newcommand{\CoSY}{\Co (Y)}
\newcommand{\CoHLw}{\sideset{}{_\mathcal{H}} \Co (L^{p,q}_v (G))}
\newcommand{\CoSYdual}{\Co (Y')}
\newcommand{\CoSYpsi}{\sideset{}{_{\psi}} \Co (Y)}
\newcommand{\CoSYone}{\sideset{}{_{\psi_1}} \Co (Y)}
\newcommand{\CoSYtwo}{\sideset{}{_{\psi_2}} \Co (Y)}
\author{Hartmut F\"uhr} 
\address{Lehrstuhl A f\"ur Mathematik, RWTH Aachen University, D-52056 Aachen, Germany}
\email{fuehr@matha.rwth-aachen.de}
\author{Jordy Timo van Velthoven}
\address{Faculty of Mathematics, University of Vienna, Oskar-Morgenstern-Platz 1, A-1090 Vienna, Austria.}
\email{jordy-timo.van-velthoven@univie.ac.at}
\thanks{
The authors thank the referee for his/her careful reading and the thoughtful comments.
J.~v.~V. acknowledges support from the Austrian Science Fund (FWF) P 29462 - N35 and Y-1199.}
\subjclass[2010]{22D10, 42C15, 42C40, 46E15}
\keywords{Admissible dilation group, coorbit space theory, induced coverings, integrable group representation, smooth admissible vectors.}
\title{Coorbit spaces associated to integrably admissible dilation groups}
\date{}
\begin{document}

\maketitle

\begin{abstract}
This paper considers coorbit spaces parametrized by mixed, weighted Lebesgue spaces
with respect to the quasi-regular representation of the semi-direct product 
of Euclidean space and a suitable matrix dilation group. 
The class of dilation groups that we allow, the so-called
integrably admissible dilation groups, 
 contains the matrix groups 
yielding an irreducible, square-integrable quasi-regular representation as a proper subclass.
The obtained scale of coorbit spaces extends therefore the well-studied wavelet coorbit spaces 
associated to discrete series representations. 
We show that for any integrably admissible dilation group there exists
a convienent space of smooth, admissible analyzing vectors
that can be used to define a consistent coorbit space possessing all the essential 
properties that are known to hold in the setting of discrete series representations.  
In particular, the obtained coorbit spaces can be realized
as Besov-type decomposition spaces by means of a Littlewood-Paley type characterization. 
The classes of anisotropic
Besov spaces associated to expansive matrices are shown to coincide precisely with the coorbit spaces 
induced by the integrably admissible one-parameter groups. 
\end{abstract}

\section{Introduction}
Let $H \leq \mathrm{GL}(d, \RR)$ be a closed subgroup and consider the semi-direct product
$G = \RR^d \rtimes H$. The group $G$ acts unitarily on $L^2 (\RR^d)$ through the 
quasi-regular representation $(\pi, L^2 (\RR^d))$ by
\[
\pi (x,h) f = |\det (h) |^{-\frac{1}{2}} f(h^{-1} (\cdot -x)).
\]
Given a non-zero $\psi \in L^2 (\RR^d)$, the associated \emph{wavelet transform} 
$W_{\psi} : L^2 (\RR^d) \to C_b (G)$ 
is defined by the representation coefficients
\[
W_{\psi} f (x,h) = \langle f, \pi (x,h) \psi \rangle. 
\]
A vector $\psi \in L^2 (\RR^d)$ is called \emph{admissible} if $W_{\psi} : L^2 (\RR^d) \to L^2(G)$ 
is an isometry. 
An important consequence of admissibility of 
$\psi \in L^2 (\RR^d)$ is the \emph{reproducing formula}
\begin{align} \label{eq:reproducing_intro}
W_{\psi} f = W_{\psi} f \ast W_{\psi} \psi
\end{align} 
valid for any $f \in L^2 (\RR^d)$. 
The existence of admissible vectors and associated groups has been studied in numerous papers,
including 
\cite{fuehr2002continuous, bernier1996wavelets, fuehr2010generalized, laugesen2002characterization}.
See also \cite{currey2007admissible, oussa2013admissible, liu1997admissible} for similar investigations, 
in which $\RR^d$ is replaced by a non-commutative nilpotent Lie group $N$.  

\subsection{Integrable wavelet transforms}
This paper focuses on wavelet transforms associated with so-called
\emph{integrably admissible dilation groups}; 
see Section \ref{sec:integrably_admissible} for the precise definition. 
An important property of such a dilation group $H \leq \mathrm{GL}(d, \RR)$ is that 
the quasi-regular representation $(\pi, L^2 (\RR^d))$ of $G = \RR^d \rtimes H$
is not merely square-integrable, but also integrable in the sense that there exists an admissible 
$\psi \in L^2 (\RR^d)$ such that $W_{\psi} \psi \in L^1_v (G)$ for some suitable weighting function
$v$ on $G$. 
The integrability of $(\pi, L^2 (\RR^d))$ is a property that is of interest for at least two reasons.
On the one hand, it is exploited in abstract harmonic analysis 
for constructing
projections in the convolution algebra $L^1 (G)$. 
Indeed, the reproducing formula \eqref{eq:reproducing_intro} yields 
that $W_{\psi} \psi = (W_{\psi} \psi)^{\sharp} = W_{\psi} \psi \ast W_{\psi} \psi$, 
with $(W_{\psi} \psi)^{\sharp} (x) = \overline{W_{\psi} \psi (x^{-1})}$. 
Thus, if $F := \Delta_G^{-1/2} W_{\psi} \psi \in L^1 (G)$, then 
$F = F^* = F \ast F$, where $F^* (x) = \Delta_G (x)^{-1} \overline{F(x^{-1})}$ 
denotes the usual involution in $L^1 (G)$,
showing that $F$ is a projection in $L^1 (G)$. 
The existence of projections in $L^1 (G)$ is related to the existence of compact open sets
in the unitary dual $\widehat{G}$, equipped with the so-called Fell topology, and is
an area of ongoing research \cite{currey2016integrable, kaniuth1996minimal, grochenig1992compact, kaniuth2013induced}.  
In particular, we mention the recent paper \cite{currey2016integrable}, 
in which the Kirillov-Bernat correspondence between the dual and coadjoint orbits is used, 
to obtain sufficient and necessary conditions for integrable wavelet transforms 
phrased in terms of dual orbit spaces $\mathcal{O} \subset \RHat^d$. 
Building on \cite{kaniuth1996minimal}, the results in \cite{currey2016integrable} paved the way 
for the notion of an integrably admissible dilations group as used in this paper. 

Integrable representations also play an essential role in the coorbit space theory  
developed by Feichtinger and Gr\"ochenig 
\cite{feichtinger1989banach1, feichtinger1989banach2, feichtinger1988unified} 
and it is in this context that we investigate the integrable wavelet transform in this paper.
The coorbit spaces associated to semi-direct products are defined in terms of decay properties
of the (extended) matrix coefficients $W_{\psi} f (x,h) = \langle f, \pi (x,h) \psi \rangle$
and provide a family of Banach function spaces induced by the action of the group. 
The main ingredient for defining coorbit spaces is a reproducing formula as in
\eqref{eq:reproducing_intro}. The formula \eqref{eq:reproducing_intro}
is well-known to hold for any discrete series representation \cite{duflo1976regular, grossmann1985transforms}, but also holds for numerous
reducible square-integrable representations, 
including the representations considered here. 
If, in addition to \eqref{eq:reproducing_intro}, 
the  representation $\pi$ is integrable, 
then the reproducing formula can be discretized 
to obtain atomic decompositions for coorbit spaces  
 \cite{grochenig1991describing}. 

\subsection{Coorbit spaces associated to integrably admissible dilation groups}
In this paper we consider coorbit spaces associated 
to integrably admissible dilation groups.
Since this setting contains the class of irreducibly admissible dilation groups as 
a proper subclass, the quasi-regular representation might be reducible.
The scale of coorbit spaces that we obtain extends therefore the wavelet coorbit spaces 
associated to discrete series representations. 
A key property of the original coorbit spaces 
that is guaranteed by the irreducibility is 
its independence of the choice of the admissible analyzing vector used to define the space.  
This property might fail dramatically if the assumption of irreducibility of the representation is dropped. 
Indeed, in $d = 2$, the quasi-regular representation 
$(\pi, L^2 (\RR^2))$ of $G = \RR^2 \rtimes \RR^+$ acts reducibly and 
\cite[Section 2.1]{fuehr2015coorbit} provides an example of two integrably admissible vectors
for which the associated coorbit spaces do not coincide. 
This indicates that for
reducible representations additional considerations
are required in defining the coorbit spaces. 
More precisely, to compensate for the reducible representations, 
the requirements on the space of analyzing vectors need to be more restrictive 
than in the setting of discrete series representations. 
It turns out that this is the only formal adjustment needed to obtain a theory 
with the same features as for the irreducible setup, 
although the arguments used for general integrably admissible groups 
are at places more involved.
A key example that can be covered in the setup considered here, 
but not by the discrete series case, 
is the space induced by the  action of $G = \RR^d \rtimes H$
with $H = \exp(\RR A)$ being the one-parameter subgroup generated by a suitable matrix  
$A \in \mathrm{GL}(d, \mathbb{R})$. 
The obtained space turns out to coincide, up to suitable identifications,
with the well-known anisotropic Besov spaces \cite{bownik2005atomic, barrios2011characterizations}.

The representation-theoretic nature of the definition of coorbit spaces, 
together with a reproducing formula such as \eqref{eq:reproducing_intro},  
allows to transfer questions concerning these spaces 
into related questions concerning functions on the group $G = \RR^d \rtimes H$. 
This has the advantage that questions that do not rely on the geometry underlying $G$
can be approached in a unified manner using general tools from abstract harmonic analysis.
On the other hand, questions that rely on the structure of $G = \RR^d \rtimes H$,
and hence of $H$,
might not even be well-posed in the usual realization of the coorbit space.
Canonical questions of this type are, for example, the dilation-invariance of the 
coorbit space under the action of dilation groups other than $H$, 
and the embeddings between coorbit spaces that are defined using different
dilations groups. 
For the investigation of these questions, the realization of 
the coorbit space as a Besov-type space defined by decomposition methods 
\cite{triebel1977general, triebel1978spaces}, 
a so-called \emph{Besov-type decomposition space} 
\cite{stroeckert1979decomposition, feichtinger1985banach}, is highly beneficial 
as this realization encodes the essential properties of the coorbit space that are determined 
by the dual action of the dilation group $H$. 
The
coorbit spaces associated to the affine group $ \RR \rtimes \RR^+$ 
and the isotropic Besov spaces \cite{peetre1976besov, triebel1977fourier} 
 have been identified from the very beginning
 \cite[Section 7.2]{feichtinger1988unified}, 
and is similar in spirit to the $\varphi$-transform 
characterization of Besov spaces by Frazier and Jawerth 
\cite{Frazier1985Besov, Frazier1990Triebel}. 
The coorbit spaces associated to irreducibly admissible dilation groups 
have been identified with suitable Besov-type decomposition spaces only more recently \cite{fuehr2015wavelet}. 
In this paper we extend this identification to the more general class of 
integrably admissible dilation groups. 
As a consequence, the powerful embedding theory 
for Besov-type decomposition spaces developed in \cite{voigtlaender_embeddings} can be used 
to obtain embedding theorems for these coorbit spaces. 
Other applications of this identification are discussed in Section \ref{sec:examples}. 

\subsection{Related work}
\emph{Wavelet coorbit theory.}
The papers that are closest in spirit to the work presented here
are the predecessors \cite{fuehr2016vanishing,fuehr2015coorbit, fuehr2015wavelet} 
by the first named author. The papers \cite{fuehr2016vanishing,fuehr2015coorbit} 
establish explicit and verifiable criteria on continuous wavelet transforms 
for applying the abstract theory of \cite{feichtinger1989banach1, feichtinger1989banach2}. 
In this paper we extend the results of \cite{fuehr2015coorbit}
 to the class of integrably admissible dilation groups. 
In doing so, some auxillary results in \cite{fuehr2016vanishing,fuehr2015coorbit} will be used by citation, 
i.e., the explicit construction of a suitable control weight
(see Section \ref{sec:norm-estimates} below). 
Moreover, some of our proof methods are based on these in \cite{fuehr2016vanishing,fuehr2015coorbit}, 
but they are non-trivial adaptions in general. 
The difficulties arising in these adaptions are caused by the fact that for
irreducibly admissible dilation groups, the dual actions 
have a unique, singly generated open orbit, whereas for the dilation groups 
considered in this paper, the (possibly non-unique) open orbit might be generated by an 
arbitrary compact set rather than just a singleton; see Section \ref{sec:integrably_admissible} for the details.
On the other hand, we like to emphasize that several of the results in this paper are obtained 
relatively easily, compared to the proofs of analogous results in \cite{fuehr2016vanishing,fuehr2015coorbit}.
In part, this is due to recent progress in decomposition space theory.  
In any case, given the fact that coorbit theory is generally viewed as somewhat technical and cumbersome, 
we regard this gain in effectiveness as an important asset of our approach. 

The paper \cite{fuehr2015wavelet} identifies the wavelet coorbit spaces considered in 
\cite{fuehr2016vanishing,fuehr2015coorbit} as certain Besov-type spaces 
defined by decomposition methods in the sense of 
\cite{stroeckert1979decomposition, triebel1978spaces, feichtinger1985banach, feichtinger1987banach}.
The approach and techniques used in \cite{fuehr2015wavelet} are followed in 
the identification established in Section \ref{sec:identification} 
below. Several preliminary results of \cite{fuehr2015wavelet} that do not rely
on special properties of the dilation group are used here by citation. 
However, the extension of several auxiliary results of \cite{fuehr2015wavelet} 
to integrably admissible dilation groups
are quite subtle and technically more involved compared to the original arguments in \cite{fuehr2015wavelet}. 
These additional technicalities are most apparent in the construction of an induced 
cover and a suitable partition of unity subordinate to this cover; see the results in Section \ref{sec:decomposition}. 
Again, these technicalities are caused by the fact that the open dual orbits of the dilation groups 
considered in this paper might be generated by an arbitrary compact set instead of a singleton.
Lastly, we mention that the actual identification of the  spaces
is much simpler and more transparent than in \cite{fuehr2015wavelet} as the reservoir of the coorbit spaces 
considered in this paper
can be canonically identified with the one used to define decomposition spaces, in contrast to  \cite{fuehr2015wavelet}. 
Moreover, we simplify the proof of the norm equivalence by using a delicate density result 
recently established in \cite{romero_invertibility}. 
\\\\
\emph{Coorbit theory for dual pairs.}
A framework for coorbit spaces associated with possibly reducible, non-integrable representations 
have been developed by Christensen and \'{O}lafsson \cite{christensen2009examples, christensen2011coorbit}. 
The spaces in \cite{christensen2009examples, christensen2011coorbit} 
are defined under suitable continuity and smoothness conditions of the representation and
its associated matrix coefficients. In particular, several desired discretization results
have been obtained for this setting in \cite{christensen2012sampling}. 
The coorbit spaces considered in this paper do formally not fit in the framework for dual pairs
 since the space of analyzing vectors we consider does not form a Fr\'{e}chet space, 
which is a standing assumption in \cite{christensen2009examples, christensen2011coorbit}. 
However, the analyzing vectors chosen in this paper are very beneficial for our purposes
as their anti-dual space can be canonically identified with so-called \emph{Fourier distributions} \cite{triebel1977fourier} that are used in defining Besov-type  spaces.  
As in \cite{christensen2009examples, christensen2011coorbit}, 
the basic properties of the coorbit spaces in this paper are also established 
using continuity and smoothness arguments.

\subsection{Aims and contributions}
The central motivation of this paper is to extend, unify and synthesize results from \cite{fuehr2015coorbit,fuehr2015wavelet,currey2016integrable,fuehr_classification}, 
using the language of coorbit spaces and decomposition spaces as developed in \cite{feichtinger1985banach,stroeckert1979decomposition, feichtinger1989banach1}. 
In particular, we generalize the explicit criteria for a wavelet coorbit theory obtained in \cite{fuehr2015coorbit} 
to the setting of integrably admissible dilation groups. 
The fact that we are able to extend a fairly large portion of the results of the precursor papers in comparatively little space, while developing a somewhat novel version of coorbit space theory in the process, is testament both to the versatility and generality of the arguments in the original sources \cite{grochenig1991describing,feichtinger1989banach2, triebel1978spaces, triebel1977general} underlying our adaptation, and to the recent progress in the theory of Besov-type decomposition spaces, e.g., in \cite{voigtlaender_embeddings,voigtlaender_Sobolev,romero_invertibility}. 
As will be seen below, our simpler approach is made possible by the topological conditions underlying the notion of an integrably admissible 
dilation group, which are weak enough to cover a wide variety of settings, 
but sufficiently stringent to provide a class of analyzing wavelets that is very convenient to work with. 

\subsection{Organization}

The paper is organized as follows. 
In Section \ref{sec:wavelet_integrably} the class of integrably admissible dilation groups is introduced
Moreover, several important properties of their associated wavelet transforms are obtained.
Section \ref{sec:coorbit} is devoted to the coorbit spaces associated to 
integrably admissible dilation groups and their basic properties. 
In particular, it is demonstrated that the standard results on discretization 
of convolution operators can be applied to discretize the reproducing formula 
and obtain atomic decompositions. 
Section \ref{sec:decomposition} consists of the construction 
of induced covers and an explicit partition of unity subordinate 
to such covers.
The identification of a coorbit space as a Besov-type decomposition space
is carried out in Section \ref{sec:identification}. Examples (of classes of)
coorbit spaces associated with integrably admissible dilation groups are provided in Section
 \ref{sec:examples}. In particular, it is shown that the coorbit spaces considered in this paper 
 coincide with the original coorbit spaces provided that the quasi-regular representation is a discrete series representation. 
 
 \subsection{Notation and normalization}
Given functions $f,g : X \to [0,\infty)$, we write $f \lesssim g$ provided there exists a constant
$C>0$ such that $f(x) \leq C g(x)$ for all $x \in X$. We write $f \asymp g$ for $f \lesssim g$ 
and $g \lesssim f$. 
For a matrix $h \in \mathrm{GL}(d, \mathbb{R})$, we denote by $\|h\|_{\infty}$ 
the operator norm of the induced map $h : \mathbb{R}^d \to \mathbb{R}^d$. 
We let $|\cdot | : \mathbb{R}^d \to \mathbb{R}$ be the Euclidean norm on $\mathbb{R}^d$. 

For an open subset $\mathcal{O} \subset \mathbb{R}^d$, 
the  space of compactly supported, smooth functions is denoted by 
$\mathcal{D} (\mathcal{O}):= C_c^{\infty} (\mathcal{O})$. The space of Schwartz functions 
is denoted by $\mathcal{S} (\mathbb{R}^d)$. 
The space of distributions on $\mathcal{O}$ is denoted by $\mathcal{D}'(\mathcal{O})$ 
and the space of tempered distributions by $\mathcal{S}'(\mathbb{R}^d)$. 
The Fourier transform $\mathcal{F} : \mathcal{S}(\mathbb{R}^d) \to \mathcal{S}(\mathbb{R}^d)$ 
is normalized as $\hat{f} (\xi) = \int_{\mathbb{R}^d} f(x) e^{-2\pi ix \cdot \xi} \; dx$.
Its inverse $\mathcal{F}^{-1} f := \hat{f}(- \cdot)$ 
will also be denoted by $\check{f}$. 
Similar notations will be used for the unitary Fourier-Plancherel transform 
$\mathcal{F} : L^2 (\mathbb{R}^d) \to L^2 (\mathbb{R}^d)$ and its inverse.

\section{Wavelet transforms with semi-direct products} 
\label{sec:wavelet_integrably}

For a linear Lie group $H \leq \mathrm{GL}(d, \mathbb{R})$, 
define $G = \RR^d \rtimes H$. The admissibility of a vector $\psi \in L^2 (\RR^d)$
can be conveniently characterized by use of the \emph{dual action} of $H$
on the Fourier domain $\RHat^d \cong \mathbb{R}^d$, defined by 
$H \times \RHat^d \ni (h, \xi) \mapsto h^T \xi$, where $h^T$ denotes the transpose of $h \in H$.

\begin{lemma}[\cite{fuehr2002continuous, laugesen2002characterization}]
A vector $\psi \in L^2 (\mathbb{R}^d)$ is  admissible if, and only if,
\begin{align} \label{eq:Calderon}
 \int_H | \widehat{\psi} (h^T \xi)|^2 \; d\mu_H (h) = 1
\end{align}
for a.e. $\xi \in \RHat^d$. 
\end{lemma}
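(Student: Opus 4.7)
My plan is to reduce the condition $\|W_\psi f\|_{L^2(G)} = \|f\|_{L^2}$ for all $f \in L^2(\RR^d)$ to a pointwise identity on $\widehat{\RR}^d$ by two applications of Plancherel and one of Fubini. The starting observation is that for each fixed $h \in H$, the map $y \mapsto \pi(x,h)\psi(y) = |\det h|^{-1/2}\psi(h^{-1}(y-x))$ has Fourier transform $|\det h|^{1/2} e^{-2\pi i x \cdot \xi}\, \widehat{\psi}(h^T \xi)$, as follows from the substitution $u = h^{-1}(y-x)$. Applying Plancherel to $W_\psi f(x,h) = \langle f, \pi(x,h)\psi\rangle$ on $\RR^d$ therefore yields the representation
\[
W_\psi f(x,h) \;=\; |\det h|^{1/2} \int_{\widehat{\RR}^d} \widehat{f}(\xi)\, \overline{\widehat{\psi}(h^T \xi)}\, e^{2\pi i x \cdot \xi}\, d\xi,
\]
which identifies $W_\psi f(\,\cdot\,,h)$ as $|\det h|^{1/2}$ times the inverse Fourier transform of the $L^2$-function $\xi \mapsto \widehat{f}(\xi)\, \overline{\widehat{\psi}(h^T \xi)}$.

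Next, I would fix the left Haar measure on $G = \RR^d \rtimes H$ as $d\mu_G(x,h) = |\det h|^{-1}\, dx\, d\mu_H(h)$ (a short verification using the group law $(y,k)(x,h) = (y+kx,\,kh)$ confirms left invariance). Plancherel in the $x$ variable then gives
\[
\int_{\RR^d} |W_\psi f(x,h)|^2\, dx \;=\; |\det h| \int_{\widehat{\RR}^d} |\widehat{f}(\xi)|^2\, |\widehat{\psi}(h^T \xi)|^2\, d\xi.
\]
The modular factor $|\det h|$ cancels against $|\det h|^{-1}$ from the Haar measure, and Tonelli (the integrand being nonnegative) lets me swap the order of integration to obtain
\[
\|W_\psi f\|_{L^2(G)}^2 \;=\; \int_{\widehat{\RR}^d} |\widehat{f}(\xi)|^2 \left[ \int_H |\widehat{\psi}(h^T \xi)|^2\, d\mu_H(h) \right] d\xi.
\]

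Write $A_\psi(\xi) := \int_H |\widehat{\psi}(h^T\xi)|^2\, d\mu_H(h)$, an a.e.\ well-defined nonnegative measurable function on $\widehat{\RR}^d$. Since $\|f\|_{L^2(\RR^d)}^2 = \int |\widehat{f}(\xi)|^2\, d\xi$, admissibility of $\psi$ is equivalent to
\[
\int_{\widehat{\RR}^d} |\widehat{f}(\xi)|^2\, \bigl(A_\psi(\xi) - 1\bigr)\, d\xi \;=\; 0 \qquad \text{for all } f \in L^2(\RR^d).
\]
Sufficiency of \eqref{eq:Calderon} is now immediate. For necessity, I would use that $\widehat{f}$ ranges over all of $L^2(\widehat{\RR}^d)$, so $|\widehat{f}|^2$ can be chosen equal to $\mathbf{1}_E$ for any measurable set $E \subset \widehat{\RR}^d$ of finite measure; this forces $\int_E (A_\psi - 1)\, d\xi = 0$ for all such $E$, hence $A_\psi = 1$ almost everywhere.

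The only genuinely delicate point in this plan is bookkeeping of the two Jacobian factors (one from the Fourier transform change of variables, one from the Haar measure of $G$), together with justifying the use of Fubini; the latter causes no issue here because the integrand is nonnegative once absolute values are taken. Beyond that, the argument is a direct computation.
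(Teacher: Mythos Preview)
Your proof is correct and follows the standard route: Plancherel on $\RR^d$ to move to the Fourier side, Plancherel again in the $x$-variable, Tonelli to exchange integrals, and then the identification of $A_\psi \equiv 1$ a.e.\ via testing against indicator functions. The paper does not actually give its own proof of this lemma; it simply cites \cite{fuehr2002continuous, laugesen2002characterization}, so there is no in-paper argument to compare against. Your computation is essentially what one finds in those references, and the Jacobian bookkeeping and Tonelli justification you flag are exactly the points that need care.
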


Given two vectors $\psi_1, \psi_2 \in L^2 (\mathbb{R}^d)$ satisfying
 the admissibility condition \eqref{eq:Calderon},  
the property that the wavelet transform 
$W_{\psi_i} : L^2 (\mathbb{R}^d) \to L^2 (G)$, with $i \in \{1,2\}$,
isometrically intertwines $(\pi, L^2 (\mathbb{R}^d))$ and the left-regular representation 
$(\lambda_{G}, L^2 (G))$, 
yields the reproducing formula
\begin{align} \label{eq:L2_reproducingformula}
 W_{\psi_1} f \ast W_{\psi_2} \psi_1 = W_{\psi_2} f 
\end{align}
for all $f \in L^2 (\RR^d)$. 

\subsection{Integrably admissible dilation groups} \label{sec:integrably_admissible}
The following definition introduces the class of dilation groups 
that will be treated in this paper. 

\begin{definition} \label{def:integrably_admissible}
A closed subgroup $H \leq \mathrm{GL}(d, \mathbb{R})$ is called \emph{integrably admissible with essential frequency support $\mathcal{O}$}
if  $\mathcal{O} \subset \RHat^d$ is an open set satisfying:
\begin{enumerate}[(i)] 
\item The set $\mathcal{O}$ is co-null and $H^T$-invariant;
\item The dual action of $H$ on $\mathcal{O}$ is proper, i.e., 
for all compact subsets $K \subset \mathcal{O}$, the set
\[
\{ (h, \xi) \in H \times \mathcal{O} \; : \; (h^T \xi, \xi) \in K \times K \} 
\subset H \times \mathcal{O} 
\]
is compact;
\item \label{it:compactness_criterium} 
There exists a compact subset $C \subset \mathcal{O}$ such that
$\mathcal{O} = H^T C$.
\end{enumerate}
\end{definition}

\begin{remark}
\begin{enumerate}[(a)]
\item
 The condition that the dilation group $H \leq \mathrm{GL}(d, \mathbb{R})$ 
 is closed in the relative topology is rather natural 
 and is necessary for the existence of an admissible vector, 
 cf. \cite[Proposition 5.1]{fuehr2005abstract} and \cite[Proposition 5]{MR1633179}.
\item
The compactness criterion \eqref{it:compactness_criterium} 
in Definition \ref{def:integrably_admissible} 
is equivalent to the orbit space $\mathcal{O} / H^T$ being compact with respect to the quotient topology, see
\cite[Corollary 4.2]{kaniuth1996minimal}. 
Condition (ii) therefore entails, in particular,
that the isotropy subgroups $H_{\xi} := \{ h \in H \; | \; h^T \xi = \xi \}$ 
are compact for all $\xi \in \mathcal{O}$. 
Moreover, the orbit maps $p_{\xi} : H \to \mathcal{O}, \; h \mapsto h^T \xi$
are proper for  $\xi \in \mathcal{O}$.  
\end{enumerate}
\end{remark}

It is currently not well-understood whether the essential frequency support associated to an integrably admissible matrix group is uniquely defined by the group or not. It is conceivable that given such a group, two different sets $\mathcal{O}$ and $\mathcal{O}'$ could serve as essential frequency support, which would necessarily only differ by a nullset. However, note that essential frequency support is described in terms of topological conditions, which cannot be expected to be stable under addition or removal of a set of measure zero. This somewhat subtle point is relevant, since the essential frequency support enters into the decomposition space description of coorbit spaces. 

In order to state a convenient characterization of integrably admissible dilation groups, 
we denote, following \cite{palais1961on}, for given $Y, Z \subset \mathbb{R}^d$, 
\[
((Y,Z)) := \{ h \in H \; | \; h^T Y \cap Z \neq \emptyset \}. 
\]
The following result is \cite[Proposition 12]{currey2016integrable}, rephrased in 
the terminology of the current paper. 

\begin{lemma}[\cite{currey2016integrable}] \label{lem:characterization_integrably}
Let $H \leq \mathrm{GL}(d,\mathbb{R})$ and let $\mathcal{O} \subset \RHat^d$ be open, co-null and $H^T$-invariant. 
Then the following are equivalent:
\begin{enumerate}
\item[(i)] The group $H$ is integrably admissible with essential frequency support $\mathcal{O}$.
\item[(ii)] There exists some open, relatively compact $C \subset \mathcal{O}$ with $H^T C = \mathcal{O}$ and for any 
such set $C$, the set $((C,C))$ is relatively compact in $H$. 
\end{enumerate}
\end{lemma}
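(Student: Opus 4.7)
\textbf{Proof plan for Lemma \ref{lem:characterization_integrably}.}

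The plan is to prove the two implications separately, with the properness condition being the substantive content in each direction.

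\textbf{$(i) \Rightarrow (ii)$.} I first upgrade the compact generating set supplied by Definition \ref{def:integrably_admissible}\,(iii) to an open relatively compact one. Starting from a compact $C_0 \subset \mathcal{O}$ with $H^T C_0 = \mathcal{O}$, I cover $C_0$ by finitely many open subsets of $\mathcal{O}$ each of whose closure is compactly contained in $\mathcal{O}$, and take $C$ to be their union. By $H^T$-invariance of $\mathcal{O}$ and since $C \supset C_0$, we have $H^T C = \mathcal{O}$. Now let $C \subset \mathcal{O}$ be any open relatively compact set with $H^T C = \mathcal{O}$. The set $\overline{C}$ is compact in $\mathcal{O}$, so by the properness condition Definition \ref{def:integrably_admissible}\,(ii), the set
\[
S := \{(h,\xi) \in H \times \mathcal{O} : (h^T\xi, \xi) \in \overline{C} \times \overline{C}\}
\]
is compact. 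Since $((C,C)) \subset \mathrm{pr}_H(S)$, it is relatively compact in $H$.

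\textbf{$(ii) \Rightarrow (i)$.} Here conditions (i) and (iii) of Definition \ref{def:integrably_admissible} are essentially given: the assumption on $\mathcal{O}$ provides (i), while $\overline{C}$ is a compact set with $H^T \overline{C} \supset H^T C = \mathcal{O}$, yielding (iii). The main task is to verify properness. Fix a compact $K \subset \mathcal{O}$. Since $H^T C = \mathcal{O}$ and each $h^T C$ is open, the family $\{h^T C\}_{h \in H}$ covers $K$, so there exist $h_1, \ldots, h_n \in H$ with $K \subset \bigcup_{i=1}^n h_i^T C$. Suppose $(h,\xi) \in H \times \mathcal{O}$ satisfies $\xi, h^T\xi \in K$. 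Then there exist indices $i,j$ and $\eta_1, \eta_2 \in C$ with $\xi = h_i^T \eta_1$ and $h^T \xi = h_j^T \eta_2$. Substituting the first equality into the second yields $(h_j^{-1} h_i h)^T \eta_1 = \eta_2 \in C$ with $\eta_1 \in C$, so $h_j^{-1} h_i h \in ((C,C))$, and hence
\[
h \in \bigcup_{1 \le i, j \le n} h_i^{-1} h_j \cdot ((C,C)) =: L.
\]
By hypothesis $((C,C))$ is relatively compact, so $L$ is relatively compact in $H$. Consequently, the set $\{(h,\xi) \in H \times \mathcal{O} : (h^T\xi, \xi) \in K \times K\}$ is contained in the compact set $\overline{L} \times K \subset H \times \mathcal{O}$. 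Being the preimage of the closed set $K \times K$ under the continuous map $(h,\xi) \mapsto (h^T\xi, \xi)$, it is closed in $H \times \mathcal{O}$, and hence compact.

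The key computational step — translating the orbit condition $\xi, h^T\xi \in K$ into the statement $h_j^{-1} h_i h \in ((C,C))$ via the finite covering $K \subset \bigcup h_i^T C$ — is the only non-formal ingredient; everything else is point-set bookkeeping around openness, relative compactness, and the properness definition. I expect the minor subtlety to be making sure the compact generating set in (iii) can indeed be enlarged to an open relatively compact one in $\mathcal{O}$, which relies critically on $\mathcal{O}$ being open.
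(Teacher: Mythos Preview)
The paper does not actually prove this lemma; it simply cites \cite[Proposition 12]{currey2016integrable} and moves on. Your proposal therefore supplies a self-contained argument where the paper defers to the literature, and the approach you outline --- upgrading the compact generator to an open relatively compact one, and conversely covering an arbitrary compact $K$ by finitely many translates $h_i^T C$ to trap $h$ in a finite union of two-sided translates of $((C,C))$ --- is correct and is essentially the natural way to do it.

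One small algebraic slip: from $\xi = h_i^T \eta_1$ and $h^T \xi = h_j^T \eta_2$ you get $(h_j^{-1})^T h^T h_i^T \eta_1 = \eta_2$, which is $(h_i h h_j^{-1})^T \eta_1 = \eta_2$, so the element landing in $((C,C))$ is $h_i h h_j^{-1}$ rather than $h_j^{-1} h_i h$, and hence $h \in h_i^{-1}\,((C,C))\,h_j$. This does not affect the conclusion, since a finite union of two-sided translates of a relatively compact set is still relatively compact. You should also make explicit that in the ``for any such $C$'' part you are using $\overline{C} \subset \mathcal{O}$ (i.e., relative compactness in $\mathcal{O}$), so that properness applies to $K = \overline{C}$.
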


\begin{corollary} \label{cor:KKcp}
If $H \leq \mathrm{GL}(d,\mathbb{R})$ is integrably admissible with essential frequency support $\mathcal{O}$, then, given compact sets $C_1, C_2 \subseteq \mathcal{O}$, the set
$((C_1, C_2))$ is relatively compact in $H$. 
\end{corollary}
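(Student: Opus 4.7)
The plan is to deduce the corollary directly from the properness condition (ii) in Definition \ref{def:integrably_admissible}, without needing the open/relatively-compact generating set from Lemma \ref{lem:characterization_integrably}. The key observation is that the set $((C_1,C_2))$ is naturally the projection onto $H$ of a slice of the set whose compactness is postulated by properness of the dual action.

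Concretely, I would set $K := C_1 \cup C_2 \subseteq \mathcal{O}$, which is compact since $C_1$ and $C_2$ are. Properness of the dual action then gives that
\[
S \;:=\; \{ (h,\xi) \in H \times \mathcal{O} \;:\; (h^T\xi, \xi) \in K \times K \}
\]
is compact in $H \times \mathcal{O}$. Any $h \in ((C_1,C_2))$ admits, by definition, some $\xi \in C_1$ with $h^T\xi \in C_2$; for such a pair one has $\xi \in K$ and $h^T\xi \in K$, so $(h,\xi) \in S$. Consequently
\[
((C_1,C_2)) \;\subseteq\; \mathrm{pr}_H(S),
\]
where $\mathrm{pr}_H \colon H \times \mathcal{O} \to H$ is the continuous first projection. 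Since the continuous image of a compact set is compact, $\mathrm{pr}_H(S)$ is compact in $H$, and therefore $((C_1,C_2))$ is relatively compact.

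There is no real obstacle here; the statement is essentially a reformulation of properness of the dual action. The only minor point worth flagging is that the properness hypothesis is stated in terms of symmetric pairs $(h^T\xi,\xi) \in K \times K$ with a single compact $K$, so one must take the union $C_1 \cup C_2$ (rather than treating $C_1$ and $C_2$ separately) in order to apply the definition verbatim. Beyond that, the argument is a one-step projection.
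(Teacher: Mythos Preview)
Your proof is correct, and it takes a genuinely different route from the paper's argument. The paper invokes Lemma~\ref{lem:characterization_integrably}: it fixes a generating compact set $C \subset \mathcal{O}$ with $H^T C = \mathcal{O}$ and $((C,C))$ relatively compact, covers each $C_i$ by finitely many $H$-translates of $C$ to obtain finite sets $F_i \subseteq H$ with $C_i \subset F_i C$, and then concludes via the inclusion $((C_1,C_2)) \subseteq F_2\,((C,C))\,F_1^{-1}$.

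Your argument is more direct: you bypass Lemma~\ref{lem:characterization_integrably} and use only condition~(ii) of Definition~\ref{def:integrably_admissible}, observing that $((C_1,C_2))$ sits inside the $H$-projection of the compact set $S$ furnished by properness. This shows, in particular, that the corollary is a consequence of properness alone and does not require the compactly generated orbit condition~(iii). The paper's approach, by contrast, yields the slightly more explicit containment $((C_1,C_2)) \subseteq F_2\,((C,C))\,F_1^{-1}$, which could in principle be useful for tracking constants, though the paper does not exploit this further.
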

\begin{proof}
Let $C \subset \mathcal{O}$ be a compact set
such that $H^T C = \mathcal{O}$. Then  $((C, C))$ is relatively compact by Lemma \ref{lem:characterization_integrably}. For $i \in \{1,2\}$, the compactness of $C_i$
yields finite sets
$F_i \subseteq H$ such that $C_i \subset F_i C$. 
But $((C_1, C_2)) \subseteq F_2 ((C,C)) F_1^{-1}$, and thus $((C_1, C_2))$ is relatively compact. 
\end{proof}

Unless otherwise specified, throughout this paper, the subgroup $H$ of  $\mathrm{GL}(d, \mathbb{R})$ 
will denote an integrably admissible dilation group with essential frequency support $\mathcal{O}$. 

We next list several classes of integrably admissible dilation groups. The associated coorbit spaces are discussed in more detail in Section \ref{sec:examples}. 

\begin{example} \label{ex:integrablyadmissible}
\begin{enumerate}[(a)]
\item If $H \leq \mathrm{GL}(d, \mathbb{R})$ is (irreducibly) \emph{admissible}, i.e., 
  there exists a single open orbit 
$\mathcal{O} = H^T \xi_0$ of full Lebesgue measure for which the associated 
 isotropy group $H_{\xi_0}$ is compact in $H$,
 then $H$ is readily seen to be integrably admissible.  
The irreducibly admissible dilation groups are precisely the ones for which
the associated quasi-regular representation $(\pi , L^2 (\RR^d))$ of $G$ 
is a discrete series representation \cite{fuehr2010generalized}. 
A full classification, up to conjugacy classes, in dimension three can be found in 
\cite{currey_classification}. 
Explicit necessary and sufficient conditions for abelian dilation groups 
$H \leq \mathrm{GL}(d, \mathbb{R})$ can be found in \cite{bruna2015characterizing}. 
\item If $H = \exp(\RR A)$, then $H$ is integrably admissible if, and only if, 
the real parts of all eigenvalues of $A$ are either strictly negative or strictly positive, 
see \cite{kaniuth2013induced, grochenig1992compact}. 
The essential frequency support can then be taken as $\mathcal{O} = \RR^d \setminus \{0\}$. 
\item \label{ex:two-parameter} If $d = 3$ and $H \leq \mathrm{GL}(d, \mathbb{R})$ is connected and abelian, 
then a complete classification of the integrably admissible dilation groups is given in \cite[Proposition 18]{currey2016integrable}.  
As a concrete example, we mention $H = \exp(\mathbb{R} A) \exp(\mathbb{R}B)$, with 
$$
A = \left[ \begin{array}{ccc} 1 && \\ &0&\\&&\alpha\end{array} \right]
$$
and 
$$
B = \left[ \begin{array}{ccc} 0 && \\ &1&\\&&\beta \end{array} \right].
$$ 
Then $H$ is an abelian, two-dimensional closed subgroup of $GL(3,\mathbb{R})$. By \cite[Proposition 4.1]{currey2016integrable}, $H$ is integrably admissible if and only if $\alpha \beta = 0$ and $\alpha + \beta >0$, i.e., one of the two parameters is zero, and the other one strictly positive. The essential frequency support is given by 
\[
 \mathcal{O} = \{ (x_1,x_2,x_3) : x_1 \not= 0 \not=  x_2^2+x_3^2 \} \mbox{ for } \alpha = 0
\] and
 \[
 \mathcal{O} = \{ (x_1,x_2,x_3) : x_1^2+x_3^2 \not= 0 \not=  x_2 \} \mbox{ for } \beta = 0.
\] 

Picking parameters $\alpha, \beta$ with $\alpha \beta < 0$ provides an intriguing example of semidirect product groups $\mathbb{R}^3 \rtimes H$ with the property that the support of the quasi-regular representation $\pi$ is an open compact subset of the dual. However, it is currently unknown whether there exists an admissible vector $\psi$ such that $\Delta_G^{-1/2} \mathcal{W}_\psi \psi \in L^1(G)$. Since in this case $H$ is not integrably admissible, the methods for constructing these vectors that this paper relies on are not available.  
\end{enumerate}
\end{example}

The significance of an integrably admissible dilation group is that it guarantees 
the existence of suitable admissible vectors, as proven in \cite[Proposition 10]{currey2016integrable}:

\begin{theorem}[\cite{currey2016integrable}] \label{thm:admissible_existence}
Suppose $\mathcal{O} = H^T C$ is an essential frequency support of $H \leq \mathrm{GL}(d, \mathbb{R})$.  
Then there exists an admissible vector 
$\psi \in \mathcal{F}^{-1} (C_c^{\infty} (\mathcal{O}))$, with $\widehat{\psi}^{-1} (\mathbb{C} \setminus \{0\}) \supset C$. 
\end{theorem}

\subsection{Extended matrix coefficients}
This section is concerned with extending the action of the quasi-regular representation 
to suitable distribution spaces. In particular, a point-wise estimate for the decay of the extended matrix 
coefficients will be obtained and the reproducing formula \eqref{eq:L2_reproducingformula}
will be extended. 

\begin{definition}
The space of \emph{analyzing vectors} $\mathcal{S}_{\mathcal{O}} (\mathbb{R}^d)$
is defined as the inverse Fourier image 
$\mathcal{S}_{\mathcal{O}} (\mathbb{R}^d) := \mathcal{F}^{-1} (\mathcal{D}(\mathcal{O}))$,
where $\mathcal{D}(\mathcal{O}) := C_c^{\infty} (\mathcal{O})$ is equipped with the usual inductive limit topology.
The space $\mathcal{S}_{\mathcal{O}} (\mathbb{R}^d)$ will be equipped with 
the unique topology making $\mathcal{F}^{-1} : \mathcal{D} (\mathcal{O}) \to \mathcal{S}_{\mathcal{O}} (\mathbb{R}^d)$ into a homeomorphism. 
The anti-dual space of $\mathcal{S}_{\mathcal{O}} (\mathbb{R}^d)$, 
i.e., the space of all continuous conjugate-linear functionals, 
 will be written as $\SOdual$.
The space $\SOdual$ is assumed to be equipped with the weak$^*$-topology
and the sesquilinear pairing
$\langle \cdot , \cdot \rangle : \SOdual \times \mathcal{S}_{\mathcal{O}} (\mathbb{R}^d) \to \mathbb{C}$.  
\end{definition}

The (extended) Fourier transform $\mathcal{F} f := \widehat{f} \in \mathcal{D}' (\mathcal{O})$ of $f \in \SOdual$ is defined 
by duality; i.e.,
\[
\mathcal{F} : \SOdual \to \mathcal{D}'(\mathcal{O}), \quad f \mapsto \hat{f} := f \circ \mathcal{F}^{-1}.
\]

We mention the following simple properties of $\mathcal{S}_{\mathcal{O}} (\RR^d)$ without proof. 

\begin{lemma}
\begin{enumerate}[(i)]
\item The space $\mathcal{S}_{\mathcal{O}} (\mathbb{R}^d)$ is $\pi$-invariant. 
\item The space $\mathcal{S}_{\mathcal{O}} (\mathbb{R}^d)$ is weak$^*$-dense in 
$\SOdual$
\end{enumerate}
\end{lemma}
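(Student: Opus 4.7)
The plan is to handle the two parts separately, with (i) reducing to a direct computation on the Fourier side and (ii) to a standard Hahn--Banach argument.

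For part (i), I would work entirely on the Fourier side, since by definition $\psi \in \mathcal{S}_{\mathcal{O}}(\mathbb{R}^d)$ iff $\widehat{\psi} \in \mathcal{D}(\mathcal{O})$. A short computation with the definition of $\pi(x,h)$ gives
\[
\widehat{\pi(x,h)\psi}(\xi) \;=\; |\det h|^{1/2}\, e^{-2\pi i \langle x,\xi\rangle}\, \widehat{\psi}(h^T \xi),
\]
so it suffices to show that the right-hand side lies in $\mathcal{D}(\mathcal{O})$. The function $\xi \mapsto \widehat{\psi}(h^T\xi)$ is smooth with support $(h^T)^{-1}(\supp \widehat{\psi})$, which is compact since $h^T$ is a linear homeomorphism, and is contained in $\mathcal{O}$ because $\mathcal{O}$ is $H^T$-invariant. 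Multiplication by the smooth function $\xi\mapsto e^{-2\pi i\langle x,\xi\rangle}$ and by the constant $|\det h|^{1/2}$ preserves both smoothness and compact support, giving $\widehat{\pi(x,h)\psi} \in \mathcal{D}(\mathcal{O})$, hence $\pi(x,h)\psi \in \mathcal{S}_{\mathcal{O}}(\mathbb{R}^d)$.

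For part (ii), I would first make explicit the canonical embedding $\iota: \mathcal{S}_{\mathcal{O}}(\mathbb{R}^d) \hookrightarrow \SOdual$ given via the $L^2$ sesquilinear pairing
\[
\iota(\psi)(\phi) \;:=\; \int_{\mathbb{R}^d} \psi(y)\,\overline{\phi(y)}\, dy,
\]
which is implicit in the sesquilinear pairing mentioned in the definition. Continuity of $\iota(\psi)$ follows from the fact that convergence in $\mathcal{S}_{\mathcal{O}}(\mathbb{R}^d)$ implies convergence of the Fourier transforms in $\mathcal{D}(\mathcal{O})$, hence in $L^2$, and injectivity of $\iota$ follows by testing against $\phi = \psi$.

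Next, I would invoke the standard weak$^*$-density criterion: a subspace $V$ of $\SOdual$ is weak$^*$-dense iff it separates the points of the (pre)dual $\mathcal{S}_{\mathcal{O}}(\mathbb{R}^d)$, that is, iff the only $\phi \in \mathcal{S}_{\mathcal{O}}(\mathbb{R}^d)$ satisfying $T(\phi) = 0$ for every $T \in V$ is $\phi = 0$. Applied to $V = \iota(\mathcal{S}_{\mathcal{O}}(\mathbb{R}^d))$, this condition reads: if $\int \psi\,\overline{\phi}\,dy = 0$ for every $\psi \in \mathcal{S}_{\mathcal{O}}(\mathbb{R}^d)$, then $\phi = 0$; taking $\psi = \phi$ yields $\|\phi\|_2 = 0$ and the conclusion follows.

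Neither part has a genuine obstacle; the only minor care required is to pin down the embedding $\mathcal{S}_{\mathcal{O}}(\mathbb{R}^d) \hookrightarrow \SOdual$ used implicitly in the statement and to verify that convergence in the inductive-limit topology on $\mathcal{D}(\mathcal{O})$ implies $L^2$ convergence, which ensures that the $L^2$ pairing is continuous on $\mathcal{S}_{\mathcal{O}}(\mathbb{R}^d)$. The integrable admissibility hypothesis plays essentially no role beyond guaranteeing that $\mathcal{S}_{\mathcal{O}}(\mathbb{R}^d)$ is nontrivial (via Theorem \ref{thm:admissible_existence}) and that $\mathcal{O}$ is $H^T$-invariant.
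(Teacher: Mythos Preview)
Your proposal is correct. The paper in fact states this lemma \emph{without proof}, calling the properties ``simple,'' so there is no argument to compare against; your write-up supplies exactly the expected details. For part (i) the Fourier-side computation and the use of $H^T$-invariance of $\mathcal{O}$ are the natural route, and for part (ii) the Hahn--Banach/bipolar argument via the $L^2$ pairing is the standard way to obtain weak$^*$-density. One small remark: since $\SOdual$ is the \emph{anti}-dual, the density criterion you invoke formally requires either the conjugate-linear version of Hahn--Banach or passing to the ordinary dual via complex conjugation; this is routine and does not affect your argument.
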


The action of $\pi$ can be extended from $\mathcal{S}_{\mathcal{O}} (\mathbb{R}^d)$ to $\SOdual$ by the contragradient: For $f \in \SOdual$ 
and $\psi \in \mathcal{S}_{\mathcal{O}} (\mathbb{R}^d)$, 
\[
\langle \pi(g) f, \psi \rangle = \langle f, \pi(g^{-1}) \psi \rangle, \quad g \in G. 
\]
The (extended) wavelet transform $W_{\psi} f$ of $f \in \SOdual$, relative to $\psi \in \mathcal{S}_{\mathcal{O}} (\mathbb{R}^d)$, is defined by
\[
W_{\psi} f : G \to \mathbb{C}, \quad g \mapsto \langle f , \pi(g) \psi \rangle. 
\]
Point-wise estimates of the coefficient decay are provided by the following result.
Here, given a compact set $K \subset \RR^d$ and  $N >0$, 
the semi-norm of $f \in C^{\infty}(K)$ is as usual denoted by 
$\| f \|_{K,N} := \max_{|\alpha| \leq N} \| (\partial^{\alpha} f) \cdot \mathds{1}_K \|_{\infty}$. 

\begin{lemma} \label{lem:coefficient_estimate}
Let $\psi \in \mathcal{S}_{\mathcal{O}} (\mathbb{R}^d)$ with $K_1 := \supp \widehat{\psi}$.
\begin{enumerate}[(i)]
\item Suppose $\phi \in \mathcal{S}_{\mathcal{O}} (\mathbb{R}^d)$ with $K_2 := \supp \widehat{\phi}$.
Then, for all $N \in \mathbb{N}$, there exists a $C = C(N) > 0$ such that
\[
| W_{\psi} \phi (x,h)| \leq C (1 + |x|)^{-N} (1 + \|h \|_{\infty})^{N+1/2} \| \widehat{\psi} \|_{K_1, N} \| \widehat{\phi} \|_{K_2, N} 
\mathds{1}_{((K_1, K_2))} (h) 
\]
for all $(x,h) \in \mathbb{R}^d \rtimes H$. 
\item Suppose $f \in \SOdual$. Then there exist measurable functions
$m_1: H \to \mathbb{R}^+$ and $m_2 : H \to \mathbb{N}_0$ that are bounded on compact sets,
such that
\[
|W_{\psi} f (x,h) | \leq m_1(h) (1 + |x|)^{m_2(h)} (1 + \| h \|_{\infty})^{m_2(h)} \| \widehat{\psi} \|_{K_1, m_2(h)}
\]
for all $(x,h) \in \mathbb{R}^d \rtimes H$
\end{enumerate}
\end{lemma}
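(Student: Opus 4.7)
The plan is to compute both estimates by passing to the Fourier side and using integration by parts; the essential ingredient is Parseval's identity together with the explicit formula $\widehat{\pi(x,h)\psi}(\xi) = |\det h|^{1/2} e^{-2\pi i x\cdot\xi}\,\widehat{\psi}(h^T \xi)$.

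For part (i), I would apply Parseval to write
\[
W_\psi \phi(x,h) = |\det h|^{1/2}\int_{\RHat^d} \widehat{\phi}(\xi)\,\overline{\widehat{\psi}(h^T\xi)}\,e^{2\pi i x\cdot \xi}\,d\xi.
\]
The integrand vanishes identically unless $\xi \in K_2$ and $h^T \xi \in K_1$, i.e., unless $h^T K_2 \cap K_1 \neq \emptyset$; up to the convention/inversion of $((\cdot,\cdot))$ this is precisely the indicator in the statement, and by Corollary \ref{cor:KKcp} the relevant set is relatively compact in $H$. Integrating by parts $N$ times against the phase $e^{2\pi i x\cdot\xi}$ yields a factor $(1+|x|)^{-N}$, while the chain rule applied to $\xi\mapsto \widehat{\psi}(h^T \xi)$ contributes at most $\|h\|^N$ from the $N$ differentiations. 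Combined with the prefactor $|\det h|^{1/2}$, which accounts for the additional exponent $1/2$ in $(1+\|h\|)^{N+1/2}$, this gives the asserted bound in terms of $\|\widehat{\psi}\|_{K_1, N}$ and $\|\widehat{\phi}\|_{K_2, N}$.

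For part (ii), I would exploit the continuity of $f \in \SOdual$ with respect to the inductive limit topology: for every compact $K \subset \mathcal{O}$ there exist $C_K > 0$ and $N_K \in \mathbb{N}$ such that
\[
|\langle f, \varphi\rangle| \leq C_K\,\|\widehat{\varphi}\|_{K, N_K}
\]
for every $\varphi \in \mathcal{S}_{\mathcal{O}}(\mathbb{R}^d)$ with $\supp\widehat{\varphi}\subseteq K$. For fixed $h$, the Fourier support of $\pi(x,h)\psi$ equals $K(h) := (h^T)^{-1} K_1 \subseteq \mathcal{O}$ by $H^T$-invariance of $\mathcal{O}$. Choosing an exhaustion $L_1 \subset L_2 \subset \cdots$ of $H$ by compact sets with $H=\bigcup_n L_n$, the sets $\widetilde{K}_n := \bigcup_{h \in L_n}(h^T)^{-1} K_1$ are compact subsets of $\mathcal{O}$ by continuity of the action, and with $C_n := C_{\widetilde{K}_n}$, $N_n := N_{\widetilde{K}_n}$ the continuity bound is uniform on $L_n$. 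Applying Leibniz and the chain rule to $\widehat{\pi(x,h)\psi}$ exactly as in part (i) gives
\[
\|\widehat{\pi(x,h)\psi}\|_{K(h), N_n} \leq C_n'\,|\det h|^{1/2}(1+|x|)^{N_n}(1+\|h\|)^{N_n}\,\|\widehat{\psi}\|_{K_1, N_n}.
\]
Setting $m(h) := N_n$ and $\alpha(h) := C_n C_n'\,|\det h|^{1/2}$ for $h \in L_n \setminus L_{n-1}$ (with $L_0 := \emptyset$) then defines measurable functions that are bounded on compact subsets of $H$.

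The main obstacle is precisely this $h$-dependence of the continuity seminorms in part (ii): since the Fourier support of $\pi(x,h)\psi$ moves with $h$ under the dual action, so do the constants $C_{K(h)}$ and $N_{K(h)}$, and there is no \emph{a priori} uniform choice of $(C, N)$ valid for all $h \in H$. The exhaustion argument sketched above sidesteps this by producing the constants piecewise on compacta in $H$, which is exactly what the qualitative local boundedness requirement in the statement allows.
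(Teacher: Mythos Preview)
Your proposal is correct and follows essentially the same route as the paper. Part (i) is identical in spirit: the paper writes $W_\psi\phi(x,h)$ as the inverse Fourier transform of $|\det h|^{1/2}\,\widehat{\phi}\cdot(D_h\overline{\widehat{\psi}})$, estimates the $\partial^\alpha$-derivatives via Leibniz and the chain rule (producing the $(1+\|h\|)^{|\alpha|}$ factor), and then uses the standard Fourier-decay bound to obtain $(1+|x|)^{-N}$; your integration-by-parts formulation is the same computation. For part (ii) the paper also invokes the local continuity estimate $|\langle f,\varphi\rangle|\le C(K)\|\widehat{\varphi}\|_{K,N(K)}$ and handles the $h$-dependence of the Fourier support by an exhaustion argument. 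The only cosmetic difference is that the paper exhausts $\mathcal{O}$ by relatively compact open sets $(A_\ell)$ and sets $\ell(h)=\inf\{n:h^{-T}K_1\subset\overline{A_n}\}$, whereas you exhaust $H$ by compacta $(L_n)$ and push forward to $\widetilde{K}_n=L_n^{-T}K_1\subset\mathcal{O}$; both produce locally bounded measurable $\alpha,m$.
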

\begin{proof}
The proof of (i) follows by similar arguments as \cite[Theorem 3.7]{fuehr2015coorbit}, 
and hence we only provide a sketch. 
We first write 
\begin{align} \label{eq:wavelet_inversefourier}
\mathcal{W}_{\psi} \phi (x,h) = \left( |{\rm det}(h)|^{1/2} \widehat{\phi}
\cdot (D_h \overline{\widehat{\psi}}) \right)^{\vee}(x),
\end{align}
where $D_h \widehat{\psi} := \widehat{\psi} (h^T \cdot)$.
Thus, if $K_1 \cap h^{-T} K_2 = \emptyset$, 
then the right-hand side of \eqref{eq:wavelet_inversefourier} is zero.
That is, if $h \notin ((K_1, K_2))$, then  
$\mathcal{W}_{\psi} \phi (\cdot,h) \equiv 0.$

In order to get the point-wise decay estimate with respect to $x \in \mathbb{R}^d$, 
we first estimate the partial derivatives of \eqref{eq:wavelet_inversefourier}. 
Applications of Leibniz' formula and the chain rule yields 
\begin{eqnarray*}
\left|\partial^\alpha (\widehat{\phi} \cdot D_h \widehat{\psi}) (\xi)
\right| & \le &  \sum_{\gamma+\beta =\alpha}  \frac{\alpha!}{\beta! \gamma!}  \left|\left(
\partial^\beta \widehat{\phi} \right)(\xi) \right|~ \left| \left(
\partial^\gamma (D_h \widehat{\psi}) \right)(\xi) \right|  
\\
& \le & C(d,\alpha) (1+\|h \|_\infty)^{|\alpha|} \sum_{|\beta|, |\gamma| \le |\alpha|}  
 \left|\left(
\partial^\beta \widehat{\phi} \right)(\xi) \right| \cdot \left| \left( \partial^\gamma \widehat{\psi} \right) (h^T \xi) \right|
\\ 
& \le & C(d,\alpha)  (1+\|h \|_\infty)^{|\alpha|} \| \widehat{\psi} \|_{K_1,|\alpha|} \| \widehat{\phi } \|_{K_2,|\alpha|} \mathds{1}_{K_2}(\xi).
\end{eqnarray*} 
By integrating this estimate over $\xi \in \RHat^d$, we obtain 
\begin{eqnarray*}
\left| \mathcal{W}_{\psi} \phi (x,h) \right| &  \le  & C(d) (1+|x|)^{-N}  \max_{|\alpha| \le N} \left\| 
\partial^\alpha  \left( \left( |{\rm det}(h)|^{1/2} \widehat{\phi}
\cdot (D_h \overline{\widehat{\psi}}) \right) \right) \right\|_{L^1} \\
& \le &   C(d,N,K_2)  (1+|x|)^{-N} (1+\|h \|_\infty)^{N+1/2} \| \widehat{\psi} \|_{K_1,N}  \| \widehat{\phi } \|_{K_2,N} \mathds{1}_{((K_1, K_2))} (h),
\end{eqnarray*}  where we used Hadamard's inequality to estimate the determinant against the norm.

(ii) Let $f \in \SOdual$. 
Then $\widehat{f} \in \mathcal{D}'(\mathcal{O})$
and hence, for every
compact $K \subset \mathcal{O}$, there exists $ N(K) \in \mathbb{N}$ and a constant $ C(K) > 0$
such that
\begin{equation} \label{eqn:estimate_functional_f}
|\langle f , \varphi \rangle | = |\langle \widehat{f}, \widehat{\varphi} \rangle| \leq C(K) \| \widehat{\varphi} \|_{K, N(K)}
\end{equation}
for every $\varphi \in \mathcal{S}_{\mathcal{O}} (\mathbb{R}^d)$ 
with $\supp \widehat{\varphi} \subset K$, e.g., see \cite[Theorem 6.8]{rudin1973functional}. 
We want to apply this to $\varphi = \pi (x,h) \psi$. For this purpose, we first estimate, via another application of the Leibniz formula, for all $\xi \in K$,
\begin{eqnarray*}
 \left|  \partial^\alpha \mathcal{F} \left( \pi(x,{\rm Id}) \psi \right)  (\xi) \right| & = & \left|  \partial^\alpha \left( \exp(2 \pi i \langle x, \cdot \rangle ) \widehat{\psi} \right) (\xi) \right| \\
 & \le & C(d,\alpha,K) (1+|x|)^{|\alpha|} \| \widehat{\psi} \|_{K,|\alpha|} ~.
\end{eqnarray*}
By the chain rule argument already employed for part (i), we further get 
\[
 \left| \partial^\alpha \mathcal{F} \left( \pi(0,h) \psi \right)  (\xi) \right| \le  C(d, |\alpha|) (1+\|h\|_\infty)^{|\alpha|+1/2} \| \widehat{\psi} \|_{K, |\alpha|}
\]
Plugging $\varphi = \pi(x,{\rm Id}) \circ \pi(0,h) \psi$ into (\ref{eqn:estimate_functional_f}) then yields 
\[
| W_{\psi} f (x,h) | \leq C (d,K) (1 + |x|)^{N(K)} (1+\|h\|_\infty)^{N(K)+1/2} \| \widehat{\psi} \|_{K_1, N(K)}
\]
provided that $K \supset \supp( \widehat{\pi(x,h) \psi}) = h^{-T} K_1$. 

Lastly, to define the functions $m_1 : H \to \mathbb{R}^+$ and 
$m_2 : H \to \mathbb{N}_0$, take an increasing covering $(A_{\ell})_{\ell \in \mathbb{N}}$
of $\mathcal{O}$ consisting of open, relatively compact sets $A_{\ell} \subset \RHat^d$.
Let $\ell (h) := \inf \{n \in \mathbb{N} \; | \; h^{-T} K_1 \subset \overline{A_n} \}$ for $h \in H$.
Then every compact set $K \subset \mathcal{O}$ is 
contained in some $A_\ell$, hence  $\ell$ is a well-defined measurable function that is bounded on compact sets. The same then holds for $m_1,m_2$, defined by $m_1 (h) = C(\overline{A_{\ell(h)}})$ 
and $m_2(h) = N(\overline{A_{\ell(h)}})+1/2$. These functions yield the desired estimate. 
\end{proof}

As a first application of Lemma \ref{lem:coefficient_estimate}, 
 the $L^2$-reproducing formula 
\eqref{eq:L2_reproducingformula} will be extended 
to the whole distribution space $\SOdual$. 

\begin{lemma} \label{lem:technical_lemma1}
Let $\psi_1 \in \mathcal{S}_{\mathcal{O}} (\mathbb{R}^d)$ be admissible. 
Then the following assertions hold:
\begin{enumerate}[(i)] 
\item For any $ \psi_2 \in \mathcal{S}_{\mathcal{O}} (\RR^d)$, the map 
\[
\SOdual \ni f 
\mapsto 
\int_G \langle f, \pi(x,h) \psi_1 \rangle \langle \pi(x,h) \psi_1, \psi_2 \rangle \; d_G (x,h)
\in \mathbb{C}
\]
is weakly continuous. 
\item For any two admissible $\psi_1, \psi_2 \in \mathcal{S}_{\mathcal{O}} (\RR^d)$ and any $f \in \SOdual$, 
the reproducing formula $W_{\psi_2} f = W_{\psi_1} f \ast W_{\psi_2} \psi_1$
holds.
\end{enumerate}
\end{lemma}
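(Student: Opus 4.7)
The plan is to reduce both parts to a combination of weak$^*$-continuity arguments and the weak$^*$-density of $\mathcal{S}_{\mathcal{O}}(\RR^d)$ in $\SOdual$ (noted in the lemma following the definition of analyzing vectors), with the classical $L^2$ reproducing formula \eqref{eq:L2_reproducingformula} providing the equality on the dense subspace in part (ii).

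For (i), I would rewrite the integrand, using the sesquilinearity of the pairing, so that the integral becomes $\langle f, \Psi \rangle$ with
\[
\Psi := \int_G \langle \psi_2, \pi(x,h)\psi_1\rangle \, \pi(x,h)\psi_1 \, d_G(x,h),
\]
interpreted as a Bochner integral valued in $\mathcal{S}_{\mathcal{O}}(\RR^d)$. The crucial step is to show this integral converges. By Lemma \ref{lem:coefficient_estimate}(i) applied to $\langle \pi(x,h)\psi_1, \psi_2 \rangle = \overline{W_{\psi_1}\psi_2(x,h)}$, the scalar weight vanishes outside the relatively compact set $((K_1,K_2)) \subset H$ (where $K_i = \supp \widehat{\psi_i}$) and decays faster than any inverse polynomial in $|x|$ on this set. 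Meanwhile, as $h$ ranges over $((K_1,K_2))$, the Fourier supports $\supp \widehat{\pi(x,h)\psi_1} = h^{-T} K_1$ are contained in a common compact subset $K' \subset \mathcal{O}$; so the integrand takes values in the Fréchet space $\mathcal{F}^{-1}(\mathcal{D}_{K'}(\mathcal{O}))$. A direct application of Leibniz' rule and the chain rule, essentially as in the proof of Lemma \ref{lem:coefficient_estimate}(i), gives for each semi-norm a polynomial bound $\|\widehat{\pi(x,h)\psi_1}\|_{K',N} \leq C_N (1+|x|)^N (1+\|h\|)^{N+1/2} \|\widehat{\psi_1}\|_{K_1,N}$. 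Pairing this with the rapid decay of the scalar weight and choosing the decay exponent large enough yields absolute Bochner integrability, so $\Psi \in \mathcal{S}_{\mathcal{O}}(\RR^d)$. Weak$^*$-continuity of $f \mapsto \langle f, \Psi \rangle$ is then definitional.

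For (ii), the computation $W_{\psi_2}\psi_1\bigl((x,h)^{-1}(y,g)\bigr) = \langle \pi(x,h)\psi_1, \pi(y,g)\psi_2 \rangle$, which follows from the unitarity of $\pi$ and the homomorphism property, rewrites the convolution as
\[
(W_{\psi_1} f \ast W_{\psi_2} \psi_1)(y,g) = \int_G \langle f, \pi(x,h)\psi_1 \rangle \langle \pi(x,h)\psi_1, \pi(y,g)\psi_2 \rangle \, d_G(x,h).
\]
Since $\pi(y,g)\psi_2 \in \mathcal{S}_{\mathcal{O}}(\RR^d)$ by $\pi$-invariance, part (i) applied with $\psi_2$ replaced by $\pi(y,g)\psi_2$ shows that the right-hand side is weak$^*$-continuous in $f$ for each fixed $(y,g)$; the left-hand side $W_{\psi_2} f(y,g) = \langle f, \pi(y,g)\psi_2 \rangle$ is weak$^*$-continuous in $f$ by definition. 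On the subspace $\mathcal{S}_{\mathcal{O}}(\RR^d) \subset L^2(\RR^d) \cap \SOdual$, both sides agree pointwise by the $L^2$-reproducing formula \eqref{eq:L2_reproducingformula}. Since $\mathcal{S}_{\mathcal{O}}(\RR^d)$ is weak$^*$-dense in $\SOdual$, equality extends to all $f \in \SOdual$, pointwise in $(y,g)$. The main obstacle, and the only point requiring actual work, is the Bochner integrability of $\Psi$ in the Fréchet topology of $\mathcal{S}_{\mathcal{O}}(\RR^d)$; once this is in hand, the remainder is a mechanical density argument.
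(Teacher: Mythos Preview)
Your proposal is correct and follows essentially the same route as the paper: both arguments establish Bochner integrability of the integrand in a suitable Fr\'echet (or Banach) space of functions with common compact Fourier support, then interchange the functional $f$ with the integral to obtain weak$^*$-continuity, and finally pass part (ii) through weak$^*$-density of $\mathcal{S}_{\mathcal{O}}(\RR^d)$ and the $L^2$ reproducing formula. The only cosmetic difference is that the paper, using admissibility of $\psi_1$, explicitly identifies the Bochner integral $\Psi$ as $\psi_2$ (so the map in (i) is exactly $f \mapsto \langle f, \psi_2\rangle$), whereas you only show $\Psi \in \mathcal{S}_{\mathcal{O}}(\RR^d)$; either suffices for the stated conclusion.
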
 
\begin{proof}
(i) It suffices to prove that
\[
\int_G \langle f, \pi(x,h) \psi_1 \rangle \langle \pi(x,h) \psi_1, \psi_2 \rangle \; d_G (x,h) = \langle f, \psi_2 \rangle.
\]
For this, consider the Fourier-side reproducing formula
\begin{align} \label{eq:Fourier_reproducingformula}
\widehat{\psi_2}  = \int_G W_{\psi_1} \psi_2 (x,h) \widehat{[\pi(x,h) \psi_1]} \; d_G (x,h). 
\end{align}
The integral \eqref{eq:Fourier_reproducingformula} converges absolutely by Lemma \ref{lem:coefficient_estimate}(i). 
Since the action $H \times \mathbb{R}^d \ni (h, \xi) \mapsto h^T \xi$ is proper and
$\widehat{\psi_1} \in \mathcal{D} (\mathcal{O})$, 
there exists a compact set 
$K \subset \mathcal{O}$ such that
\[
\supp \bigg( W_{\psi_1} \psi_2 (x,h) \widehat{[\pi(x,h) \psi_1]} \bigg) \subset K
\]
for all $(x,h) \in G$. Given $N \in \mathbb{N}$ sufficiently large, the decay estimate in
Lemma \ref{lem:coefficient_estimate}(i) guarantees that the map
\[
(x,h) \mapsto  W_{\psi_1} \psi_2 (x,h) [\widehat{ \pi (x,h) \psi_1}]
\]
from $G$ into the Banach space $(C^N (K), \| \cdot \|_{K,N} )$ is Bochner integrable,
yielding that 
\[
\int_G W_{\psi_1} \psi_2 (x,h) [\widehat{ \pi (x,h) \psi_1 }] \; d\mu_G (x,h) \in C^N (K)
\]
is well-defined. 
An application of \cite[Theorem 6.6]{rudin1973functional}, 
together with the Hahn-Banach theorem, 
entails that $\widehat{f} : \mathcal{D}(\mathcal{O}) \cap C^N (K) \to \mathbb{C}$ 
induces a continuous extension to all of $C^N (K)$ for 
sufficiently large $N \in \mathbb{N}$.  
This justifies to interchange Bochner integration and 
the evaluation of continuous
functionals, e.g., see \cite[V.5, Corollary 2]{yosida1980functional}. 
Therefore,
\begin{align*}
\langle f, \psi_2 \rangle &= \langle \widehat{f}, \widehat{\psi_2} \rangle 
= \int_G \big\langle \widehat{f} , \mathcal{F} (W_{\psi_1} \psi_2 (x,h) \cdot \pi (x,h) \psi_1 ) \big\rangle \; d_G (x,h)  \\
&= \int_G \big\langle f, \pi(x,h) \psi_1 \rangle \langle \pi(x,h) \psi_1, \psi_2 \big\rangle \; d_G (x,h), 
\numberthis \label{eq:weak_continuity}
\end{align*}
which shows (i). 

(ii) Let $f \in \SOdual$. 
By density, 
there exists a net $(\phi_{\alpha})_{\alpha \in \Lambda}$ 
of functions $\phi_{\alpha} \in \mathcal{S}_{\mathcal{O}} (\mathbb{R}^d)$ 
converging weakly to $f$. 
Equation \eqref{eq:L2_reproducingformula} yields that
 $W_{\psi_1} \phi_{\alpha} \ast W_{\psi_2} \psi_1 =  W_{\psi_2} \phi_{\alpha}$ 
for all $\alpha \in \Lambda$. 
Since the mapping
\[ f \mapsto \int_G \langle \pi(y^{-1}) f, \pi (g) \psi_1 \rangle 
\langle \pi(g) \psi_1, \psi_2 \rangle \; d\mu_G (g) = 
(W_{\psi_1} f \ast W_{\psi_2} \psi_1) (y) \] 
is weakly continuous by part (i), 
it follows that
\[ \lim_{\alpha \in \Lambda} \big(W_{\psi_1} \phi_{\alpha} \ast W_{\psi_2} \psi_1\big) (y) = (W_{\psi_1} f \ast W_{\psi_2} \psi_1) (y)
\]
 for every $y \in G$. 
On the other hand, clearly
$\lim_{\alpha \in \Lambda} W_{\psi_2} \phi_{\alpha} = W_{\psi_2} f$.
Combining both identities gives therefore
$W_{\psi_2} f = W_{\psi_1} f \ast W_{\psi_2} \psi_1$, 
as required. 
\end{proof}

\subsection{Norm estimates} \label{sec:norm-estimates}

The left Haar measure $\mu_G$ on $G = \mathbb{R}^d \rtimes H$ 
is given by $\mu_G (x,h) = |\det h|^{-1} \cdot dx dh$,
 and the modular function $\Delta_G$ on $G$ is $\Delta_G (x,h) = |\det h |^{-1} \Delta_H (h)$. 
Given a weight $v : G \to \RR^+$, the associated weighted mixed Lebesgue space 
is defined as
\[
L^{p,q}_v (G) := \bigg\{ F \in L^1_{\loc} (G) \; : \; \| f \|_{L^{p,q}_v} < \infty \bigg\}
\]
with 
\[
\|f \|_{L^{p,q}_v} := \bigg( \int_H \bigg( \big\| v(\cdot, h) f (\cdot, h) \big\|_{L^p (\RR^d)} \bigg)^{q} 
\frac{dh}{|\det h|} \bigg)^{1/q} < \infty 
\]
for $p \in [1,\infty]$ and $q \in [1,\infty)$ and
$
\| f \|_{L^{p,\infty}_v} := \esssup_{h \in H} \big( \big\| w(\cdot, h) f(\cdot, h) \big\|_{L^p (\RR^d)} \big). 
$
As usual, the conjugate $p'$ of $p \in (1,\infty)$ is defined as $p' := \frac{p}{p-1}$, and
$1' := \infty$ and $\infty' := 1$.

Throughout this paper, any weighting function on $G$ will be assumed to 
be \emph{admissible} in the following sense. 

\begin{definition}
A continuous weighting function $v : G \to (0, \infty)$ is called \emph{admissible} 
if
\begin{enumerate}[(i)]
\item The submultiplicativity condition is satisfied, i.e.,
$v((x_1, h_1) (x_2, h_2)) \leq C_v \cdot v(x_1, h_1) v(x_2, h_2)$ for all
$(x_1, h_1), (x_2, h_2) \in \mathbb{R}^d \rtimes H$ and some $C_v > 0$. 
\item There exists a locally bounded weight $v_0 : H \to (0,\infty)$ 
and an $s \in [0,\infty)$
such that $v(x,h) \leq (1+|x|)^s v_0 (h)$ for all $(x,h) \in \mathbb{R}^d \rtimes H$. 
\end{enumerate}
\end{definition}

Norm estimates for convolution operators play an essential role in the sequel. 
In particular, it will be used repeatedly that $L^{p,q}_v (G)$ forms a Banach convolution module over $L^1_w (G)$ 
for a so-called \emph{control weight} $w : G \to \mathbb{R}^+$. 

\begin{definition}
Let $Y = L^{p,q}_v (G)$ for $p,q \in [1,\infty]$ and an admissible weight $v : G \to \mathbb{R}^+$. 
A weight $w : G \to \mathbb{R}^+$ is called a \emph{control weight} for $Y$ if it satisfies
\[
w (x,h) = \Delta_G (x,h)^{-1} w ((x,h)^{-1})
\]
and
\[
\max \big( \|L_{(x,h)} \|_{Y \mapsto Y}, \|L_{(x,h)^{-1}} \|_{Y \mapsto Y}, \|R_{(x,h)} \|_{Y \mapsto Y},
\|R_{(x,h)^{-1}} \|_{Y \mapsto Y} \Delta_G (x,h)^{-1} \big) \leq w (x,h),
\]
where $L_{(x,h)}$ and $R_{(x,h)}$ denote the left and right translation by $(x,h) \in G$, respectively. 
\end{definition}

Given an admissible weight, 
there exists an associated (admissible) control weight of the same type. The corresponding 
control weight is explicitly given by the following result, 
which is \cite[Lemma 2.3]{fuehr2015coorbit}. 

\begin{lemma}[\cite{fuehr2015coorbit}] \label{lem:control_weight}
Let $p, q \in [1,\infty]$ and let $v : G \to \mathbb{R}^+$ 
be an admissible weight satisfying $v (x,h) \leq (1+|x|)^s v_0(h)$ for $s \in [0,\infty)$. Then there
exists a control weight $w : G \to \mathbb{R}^+$ for $Y = L^{p,q}_v (G)$ satisfying the estimate
\[
w (x,h) \leq (1 + |x|)^s w_0 (h), 
\]
where $w_0 : H \to \mathbb{R}^+$ is defined by
\begin{align*}
w_0 (h) &= \big(v_0(h) + v_0(h^{-1}) \big) \cdot \max \big ( \Delta_G (0,h)^{-\frac{1}{q}}, \Delta_G (0,h)^{\frac{1}{q} - 1} \big) \\
& \quad \quad 
\cdot \big( | \det h |^{\frac{1}{q} - \frac{1}{p}} + |\det h|^{\frac{1}{p} - \frac{1}{q}} \big) \big(1 + \|h\|_{\infty} + \|h^{-1} \|_{\infty} \big)^s
\end{align*}
with the convention $1/\infty := 0$. 
\end{lemma}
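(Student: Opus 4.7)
The plan is to compute the operator norms of left and right translations on $L^{p,q}_v(G)$ directly from the integral definition, obtain explicit bounds in terms of $v$, the determinant of $h$, and the modular function $\Delta_G$, and then symmetrize so that the result $w$ satisfies the required functional equation $w(x,h) = \Delta_G(x,h)^{-1} w((x,h)^{-1})$. The admissibility assumption on $v$ enters only in the final step, to separate the dependence on $x$ and $h$ into the form $(1+|x|)^s w_0(h)$.

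For left translation, I would use $(L_{(y,k)}F)(x,h) = F(k^{-1}(x-y), k^{-1}h)$, then apply the substitution $x'=k^{-1}(x-y)$ in the inner $L^p$-integral (producing a factor $|\det k|^{1/p}$) and the submultiplicativity $v(y+kx',h) \le C_v v(y,k) v(x',k^{-1}h)$ to pull $v(y,k)$ outside; the subsequent left-Haar substitution $h' = k^{-1}h$ on $H$ produces a factor $|\det k|^{-1/q}$, yielding
\[
\|L_{(y,k)}\|_{Y\to Y} \le C_v\, v(y,k)\, |\det k|^{1/p - 1/q}.
\]
For right translation, the analogous computation uses $(R_{(y,k)}F)(x,h) = F(x+hy, hk)$, writes $v(x'-hy,h) \le C_v v(x',hk) v((y,k)^{-1})$, and applies the right-Haar substitution $h'=hk$ which produces a factor $\Delta_H(k)^{-1}|\det k|=\Delta_G(0,k)^{-1}$; the resulting bound is
\[
\|R_{(y,k)}\|_{Y\to Y} \le C_v\, v((y,k)^{-1})\, \Delta_G(0,k)^{-1/q}.
\]
Substituting $(y,k)\mapsto (y,k)^{-1}=(-k^{-1}y,k^{-1})$ gives bounds for $\|L_{(y,k)^{-1}}\|$ and $\|R_{(y,k)^{-1}}\|$, and multiplying the last one by $\Delta_G(y,k)^{-1}=\Delta_G(0,k)^{-1}$ converts the $\Delta_G(0,k^{-1})^{-1/q}=\Delta_G(0,k)^{1/q}$ factor into $\Delta_G(0,k)^{1/q-1}$. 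Collecting the four bounds under a single maximum yields a function $\tilde w(y,k)$ controlled by
\[
\bigl(v(y,k)+v((y,k)^{-1})\bigr)\Bigl(|\det k|^{1/p-1/q}+|\det k|^{1/q-1/p}\Bigr)\max\bigl(\Delta_G(0,k)^{-1/q},\Delta_G(0,k)^{1/q-1}\bigr).
\]

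Finally, I would symmetrize by setting $w(x,h) := \tilde w(x,h) + \Delta_G(x,h)^{-1}\tilde w((x,h)^{-1})$, which automatically satisfies the identity $w = \Delta_G^{-1}\cdot(w\circ \mathrm{inv})$ while still dominating all four translation norms. To obtain the explicit form of the statement, I would invoke $v(y,k)\le (1+|y|)^s v_0(k)$ and $v((y,k)^{-1})\le (1+|k^{-1}y|)^s v_0(k^{-1}) \le (1+\|k^{-1}\|_\infty)^s(1+|y|)^s v_0(k^{-1})$, which separates the $(1+|y|)^s$ factor and produces the factors $v_0(k)+v_0(k^{-1})$ and $(1+\|k\|_\infty+\|k^{-1}\|_\infty)^s$ appearing in $w_0$. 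The main obstacle is purely bookkeeping: tracking the interplay between Lebesgue measure on $\mathbb{R}^d$, left Haar measure on $H$, the determinant and modular function factors produced by the various changes of variable, and the special cases $p=\infty$ or $q=\infty$ where the convention $1/\infty:=0$ must be handled so that each of the exponents $1/p-1/q$, $-1/q$, $1/q-1$ in the final formula comes out correctly.
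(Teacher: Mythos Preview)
Your proposal is correct. The paper does not actually prove this lemma: it is stated as a direct citation of \cite[Lemma~2.3]{fuehr2015coorbit}, with no argument given. Your sketch---computing the four translation-operator norms on $L^{p,q}_v(G)$ by explicit change of variables, then symmetrizing via $w := \tilde w + \Delta_G^{-1}\,(\tilde w\circ\mathrm{inv})$, and finally separating the $x$- and $h$-dependence using the admissibility bound $v(x,h)\le(1+|x|)^s v_0(h)$---is exactly the natural route, and the intermediate bounds
\[
\|L_{(y,k)}\|_{Y\to Y}\lesssim v(y,k)\,|\det k|^{1/p-1/q},\qquad
\|R_{(y,k)}\|_{Y\to Y}\lesssim v((y,k)^{-1})\,\Delta_G(0,k)^{-1/q}
\]
that you obtain are correct (the second uses $\Delta_G(0,k)^{-1}=|\det k|/\Delta_H(k)$, which appears from the right-Haar substitution on $H$ together with the $|\det h|^{-1}$ density). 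This is presumably also the argument in the cited source; there is no genuinely different method being compared here.
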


\begin{remark}
Let $v : G \to \mathbb{R}^+$ be an admissible weight. 
The control weight $w : G \to \mathbb{R}^+$ for
$Y = L^{p,q}_v (G)$ provided by Lemma \ref{lem:control_weight} depends on $p,q \in [1,\infty]$ and $s \in [0,\infty)$. 
By fixing $s' > 0$ and a submultiplicative, locally bounded weight $v_0 : H \to \mathbb{R}^+$, 
a control weight $w' : G \to \mathbb{R}^+$ can be chosen that works simultaneously for all $p, q \in [1,\infty]$ 
and all $s \in [0, s']$, e.g., the weight 
\begin{align*}
w' (x,h) &= (1+|x|)^{s'} (v_0 (h) + v_0(h)^{-1}) \max \big(1, \Delta_G (0,h)^{-1} \big) \\
& \quad \quad \cdot \big( |\det (h)| + |\det (h)|^{-1} \big) \big(1 + \|h\|_{\infty} + \|h^{-1}\|_{\infty} \big)^{s'} 
\end{align*}
satisfies $w(x,h) \leq w'(x,h)$ for all $(x,h) \in G$, where $w : G \to \mathbb{R}^+$ is any control weight 
obtained via Lemma \ref{lem:control_weight}. 
\end{remark}

For reference purposes, we mention the following well-known properties. 

\begin{proposition} \label{prop:admissible_weight_properties}
Let $v : G \to \mathbb{R}^+$ be an admissible weight and let $p,q \in [1,\infty]$ be arbitrary. 
\begin{enumerate}[(i)]
\item The space $Y = L^{p,q}_v (G)$ is a solid Banach function space. Moreover, left and right
translation act strongly continuously on $L^{p,q}_v (G)$. 
\item There exists an admissible control weight $w : G \to \mathbb{R}^+$ such that 
$L^{p,q}_v (G) \ast L^1_{w} (G) \hookrightarrow L^{p,q}_v (G)$,
with $\| F_1 \ast F_2 \|_{L^{p,q}_v}  \leq \| F_1 \|_{L^{p,q}_v} \|F_2 \|_{L^1_w}$. 
\item Any $F_1 \in L_v^{p,q} (G)$ and $F_2 \in L_{1/v}^{p',q'} (G)$ satisfy the \emph{generalized H\"older inequality}
\[
\bigg| \int_G F_1(g) F_2 (g) \; d\mu_G (g) \bigg| \leq \|F_1\|_{L^{p,q}_v} \|F_2 \|_{L^{p',q'}_{1/v}}, 
\]
where $p',q' \in [1,\infty]$ denote the conjugate exponents of $p,q \in [1,\infty]$. 
\end{enumerate}
\end{proposition}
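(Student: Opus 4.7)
The plan is to treat the three assertions in sequence, treating them as standard verifications for weighted solid mixed-norm Banach function spaces on a locally compact group, adapted to the semi-direct product $G = \RR^d \rtimes H$. The main tools are Fubini--Tonelli, Minkowski's integral inequality, and Lemma \ref{lem:control_weight} to supply the control weight required in (ii).

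For (i), the solidity of $L^{p,q}_v(G)$ is immediate from monotonicity of the inner $L^p$-norm and the outer $L^q$-norm on the absolute value of $F$. Completeness I would obtain by the usual Riesz--Fischer argument: any absolutely summable series in $L^{p,q}_v$ converges pointwise a.e.\ after two applications of Minkowski's integral inequality (first in $L^p(\RR^d, v(\cdot,h)^p\,dx)$, then in $L^q(H, dh/|\det h|)$), so the series converges in norm. For strong continuity of left and right translations (in the cases where both indices are finite, which is the standard setting), I would approximate $F \in L^{p,q}_v$ by $C_c(G)$-functions and combine uniform continuity on compacta with local boundedness of the operator norms $\|L_{(x,h)}\|_{Y\to Y}$ and $\|R_{(x,h)}\|_{Y\to Y}$, which follows from the submultiplicativity of $v$ and the estimate $v(x,h) \leq (1+|x|)^s v_0(h)$.

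For (ii), I would rewrite convolution via the left-Haar change of variable $z = y^{-1}g$ as
\[
(F_1 \ast F_2)(g) = \int_G F_1(gz^{-1}) F_2(z) \Delta_G(z)^{-1} \, d\mu_G(z) = \int_G (R_{z^{-1}} F_1)(g) \, F_2(z) \Delta_G(z)^{-1} \, d\mu_G(z),
\]
and apply Minkowski's integral inequality in $Y = L^{p,q}_v(G)$ to obtain
\[
\|F_1 \ast F_2\|_Y \leq \int_G |F_2(z)| \, \Delta_G(z)^{-1} \|R_{z^{-1}} F_1\|_Y \, d\mu_G(z).
\]
Choosing $w$ to be any control weight as produced by Lemma \ref{lem:control_weight}, the defining estimate $\|R_{z^{-1}}\|_{Y\to Y} \Delta_G(z)^{-1} \leq w(z)$ bounds the integrand pointwise by $w(z)|F_2(z)| \|F_1\|_Y$, yielding the claim with $\|F_1\ast F_2\|_Y \leq \|F_1\|_Y \|F_2\|_{L^1_w}$. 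The admissibility of $w$ (submultiplicativity and a bound of the form $w(x,h) \leq (1+|x|)^{s'} w_0(h)$) is read off the explicit formula for $w_0$ in Lemma \ref{lem:control_weight}.

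For (iii), I would apply H\"older's inequality twice: first in $x \in \RR^d$ with conjugate exponents $p, p'$, observing that the factor $v(x,h)$ in the estimate for $F_1$ cancels against the factor $1/v(x,h)$ in the estimate for $F_2$; then in $h \in H$ with conjugate exponents $q, q'$ against the measure $dh/|\det h|$. Combining the two gives the bound $\|F_1\|_{L^{p,q}_v} \|F_2\|_{L^{p',q'}_{1/v}}$.

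The only part I would expect to require real care is the modular-function bookkeeping in (ii), and in particular verifying that $w$ as produced by Lemma \ref{lem:control_weight} does satisfy $\|R_{z^{-1}}\|_{Y\to Y}\Delta_G(z)^{-1} \leq w(z)$ (and the analogous estimates for $L_z$, $L_{z^{-1}}$, $R_z$); once this is recorded, Minkowski's inequality closes the argument. The other two assertions are essentially routine consequences of Fubini and H\"older.
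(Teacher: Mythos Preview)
The paper does not prove this proposition; it is introduced with ``For reference purposes, we mention the following well-known properties'' and stated without argument. Your sketch is correct and is exactly the standard route one would take: solidity and completeness via Riesz--Fischer/Minkowski, the convolution bound via Minkowski's integral inequality together with the control-weight estimate $\|R_{z^{-1}}\|_{Y\to Y}\,\Delta_G(z)^{-1}\le w(z)$ supplied by Lemma~\ref{lem:control_weight}, and the generalized H\"older inequality by two successive applications of ordinary H\"older. Your caveat about strong continuity of translations failing when $p=\infty$ or $q=\infty$ is well taken; this is a common imprecision in such reference statements, and the applications in the paper that genuinely need strong continuity (density of $C_c$, etc.) implicitly sit in the range $p,q<\infty$.
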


Lastly, it will be 
shown that the wavelet transform of an analyzing vector belongs to a certain 
amalgam space \cite{holland1975harmonic, fournier1985amalgams, feichtinger1983wiener}. 
In particular, that the quasi-regular representation 
$(\pi, L^2 (\mathbb{R}^d))$ is $v$-integrable for any admissible weight $v$. 

\begin{definition} \label{def:amalgam}
Let $Y = L^{p,q}_v (G)$ for $p,q \in [1,\infty]$ and an admissible weight $v : G \to \RR^+$. 
Let $U \subseteq G$ be a relatively compact unit neighborhood. 
For $F \in L^{\infty}_{\loc}  (G)$, define 
\[ F^{\sharp}_U : G \to [0,\infty], \quad x \mapsto \| \mathds{1}_{Ux} F \|_{L^{\infty}}.
\]
The \emph{(right-sided) Wiener amalgam space} is defined by
\[
W^R (L^{\infty}, Y)(G) := \{ F \in L^{\infty}_{\loc} (G) \; | \; F^{\sharp}_U \in Y \}
\]
and equipped with the norm $\|F\|_{W^R(L^{\infty}, Y)} = \|F_U^{\sharp} \|_{Y}$. 
\end{definition}

\begin{remark}
The amalgam spaces  in Definition \ref{def:amalgam} 
form Banach spaces and are independent of the chosen neighborhood,
with equivalent norms for different choices, e.g., see \cite{feichtinger1983wiener}. 
\end{remark}

\begin{proposition} \label{prop:amalgam_estimate}
Suppose $\psi \in \mathcal{S}_{\mathcal{O}} (\mathbb{R}^d)$. 
Then $W_{\psi} \psi \in W^R (L^{\infty}, L^{p,q}_v) (G)$ for all $p, q \in [1,\infty]$
and any admissible weighting function $v : G \to \RR^+$. 
In particular, $W_{\psi} \psi \in L^1_v (G)$. 
\end{proposition}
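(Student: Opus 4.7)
The plan is to obtain the amalgam estimate directly from the pointwise coefficient bound in Lemma \ref{lem:coefficient_estimate}(i), combined with the relative compactness of $((K_1,K_1))$ furnished by Corollary \ref{cor:KKcp}.

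First, I would set $K := \supp \widehat{\psi}$, a compact subset of $\mathcal{O}$. Applying Lemma \ref{lem:coefficient_estimate}(i) with $\phi = \psi$ yields, for every $N \in \mathbb{N}$, a constant $C_N > 0$ such that
\[
|W_{\psi}\psi(x,h)| \;\leq\; C_N \,(1+|x|)^{-N}\,(1+\|h\|)^{N+1/2}\,\mathds{1}_{((K,K))}(h)
\]
for all $(x,h) \in G$. By Corollary \ref{cor:KKcp}, $((K,K))$ is relatively compact in $H$, so the factor $(1+\|h\|)^{N+1/2}$ restricted to the support is bounded by a constant depending on $N$ and $K$. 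Hence $W_{\psi}\psi$ is pointwise dominated by a function of the form $C'_N (1+|x|)^{-N}\mathds{1}_{L}(h)$, where $L := \overline{((K,K))} \subset H$ is compact.

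Next, I would control the localized maximal function $F^{\sharp}_U$ for a fixed relatively compact unit neighborhood $U = U_1 \times U_2 \subset \mathbb{R}^d \times H$. For $(x',h') \in U(x,h)$ one has $h' = h_0 h$ and $x' = x_0 + h_0 x$ with $(x_0,h_0) \in U$. The nonvanishing of $F^{\sharp}_U(x,h)$ therefore forces $h \in U_2^{-1} L$, which is still a relatively compact subset $L'$ of $H$. For $h \in L'$ the operator norms $\|h_0\|$ are uniformly bounded for $h_0 \in U_2$, so an elementary estimate $|x_0 + h_0 x| \geq c|x| - c'$ gives
\[
F^{\sharp}_U(x,h) \;\leq\; C''_N (1+|x|)^{-N}\,\mathds{1}_{L'}(h),
\]
with the constant depending on $U$, $K$ and $N$, but not on $(x,h)$.

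It remains to plug this bound into the definition of $\|\cdot\|_{L^{p,q}_v}$ and use that $v$ is admissible, $v(x,h) \leq (1+|x|)^s v_0(h)$. For any fixed $h \in L'$,
\[
\|v(\cdot,h) F^{\sharp}_U(\cdot,h)\|_{L^p(\mathbb{R}^d)} \;\leq\; C''_N v_0(h) \,\bigl\|(1+|\cdot|)^{s-N}\bigr\|_{L^p(\mathbb{R}^d)},
\]
which is finite as soon as $N > s + d/p$. Since $L'$ is relatively compact and $v_0$ is locally bounded, a further integration in $h$ over $L'$ against the finite measure $|\det h|^{-1}dh$ yields $\|F^{\sharp}_U\|_{L^{p,q}_v} < \infty$, simultaneously for all $p,q \in [1,\infty]$ by choosing $N$ sufficiently large. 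The final statement $W_{\psi}\psi \in L^1_v(G)$ then follows from $|W_{\psi}\psi| \leq F^{\sharp}_U$ pointwise, or simply by specializing to $p=q=1$.

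The only mildly subtle step is the pointwise maximal bound on $F^{\sharp}_U$, where one must verify that translation by the compact window $U$ on the left preserves both the compact $h$-support (using Corollary \ref{cor:KKcp}) and the rapid $x$-decay (using uniform boundedness of $\|h_0\|$ for $h_0 \in U_2$); once this is in hand, the rest is a routine integration with $N$ taken large enough to absorb all polynomial factors coming from $v$.
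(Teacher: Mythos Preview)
Your proposal is correct and follows essentially the same route as the paper: apply the pointwise bound from Lemma~\ref{lem:coefficient_estimate}(i), use the relative compactness of $((K,K))$ (Corollary~\ref{cor:KKcp}) to control the $h$-support of the maximal function and absorb the polynomial factor in $\|h\|$, propagate the rapid $x$-decay through the compact window $U$, and then choose $N > s + d/p$ to make the $L^{p,q}_v$-norm finite. The only cosmetic difference is that you absorb $(1+\|h\|)^{N+1/2}$ into the constant immediately, whereas the paper carries it along and bounds it at the end over the compact set $\overline{V}^{-1}((K,K))$; the argument and the ingredients are otherwise identical.
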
 
\begin{proof}
Let $K := \supp \widehat{\psi}$. 
For some open, relatively compact unit neighborhood $V \subset H$, 
define the unit neighborhood $U := B_1 (0) \times V$ of $G$. 
Let $(x,h) \in \mathbb{R}^d \rtimes H$ be fixed. Then, for arbitrary $N \in \mathbb{N}$, 
an application of Lemma \ref{lem:coefficient_estimate}(i)
gives a constant $C = C(N) > 0$ such that
\begin{align*}
(W_{\psi} \psi)_U^{\sharp} (x,h) 
& \leq \sup_{(y, k) \in U} | W_{\psi} \psi (y + k x, kh)| \\
& \leq C \| \widehat{\psi} \|^2_{K, N} \sup_{(y,k) \in U} (1 + |y + k x|)^{-N} (1 + \| k h \|_{\infty})^{N+1/2} \mathds{1}_{((K, K))} (kh). \numberthis \label{eq:amalgam_pointwise}
\end{align*}
To estimate \eqref{eq:amalgam_pointwise}, 
let $(y, k) \in U = B_1 (0) \times V$ be fixed.
First, combining the estimates $|x| \leq \| k^{-1} \|_{\infty} |kx| \leq \sup_{k \in \overline{V}} \| k^{-1} \|_{\infty} |kx|$ 
and $|y + kx| \geq |kx|-|y| \geq |x| \big( \sup_{k \in \overline{V}} \| k^{-1} \|_{\infty} \big)^{-1} - 1$ gives
\begin{align} \label{eq:estimate_expanded}
1+|x| \leq 1 + \sup_{k \in \overline{V}} \| k^{-1} \|_{\infty} (1 + |y + kx|) \leq (1 + \sup_{k \in \overline{V}} \| k^{-1} \|_{\infty}) (1 + |y + kx|). 
\end{align}
On the other hand, we have
$ (1 + \| k h \|_{\infty}) \leq (1 + \sup_{k \in \overline{V}} \|k \|_{\infty}) (1 + \| h \|_{\infty}).
$
Therefore, defining $C' :=  C (1 + \sup_{k \in \overline{V}} \| k^{-1} \|)^{-N}  (1 + \sup_{k \in \overline{V}} \| k \| )^\frac{1}{N+1/2}$, we can estimate \eqref{eq:amalgam_pointwise} by
\begin{align} \label{eq:local_estimate}
(W_{\psi} \psi)_U^{\sharp} (x,h)  \leq C' 
 (1 + |x|)^{-N} (1 + \| h \|_{\infty})^N \mathds{1}_{\overline{V}^{-1} ((K, K))} (h). 
\end{align} 

To estimate the norm $\| W_{\psi} \psi \|_{W^R (L^{\infty}, Y)}$, 
recall that since $v : G \to (0,\infty)$ is admissible, 
there exists an $s \in (0,\infty)$ and a locally bounded weight $v_0 : H \to (0,\infty)$
yielding the estimate $v(x,h) \leq (1+|x|)^s v_0 (h)$.  
Using this estimate and \eqref{eq:local_estimate} directly entails
\begin{align*}
\| W_{\psi} \psi \|_{W^R (L^{\infty}, Y)} 
&\leq C'  \bigg\| |\det (h^{-1}) |^{\frac{1}{q}} v_0 (h) \mathds{1}_{\overline{V}^{-1} ((K, K))} (h) (1+\|h\|_{\infty})^N \bigg\| (1+|x|)^{(s - N)} \bigg\|_{L^p } \bigg\|_{L^q} \\
&\leq C' \sup_{h \in \overline{V}^{-1} ((K, K))} \bigg( |\det (h^{-1})|^{\frac{1}{q}} v_0 (h) \bigg) 
\bigg\| (1+|x|)^{(s - N)} \bigg\|_{L^p} \bigg\| \mathds{1}_{ \overline{V}^{-1} ((K,K))} \bigg\|_{L^q }
\end{align*}
for arbitrary $N \in \mathbb{N}$. 
Hence, choosing $N \in \mathbb{N}$ such that
$(1 + |x|)^{(s-N)} \in L^p (\RR^d)$, that is, $N > d/p + s$,
 yields the claim. 
 
The $v$-integrability follows since $W^R (L^{\infty}, L^1_v)(G) \hookrightarrow L^1_v (G)$. 
\end{proof}

\section{Coorbit spaces associated to integrably admissible dilation groups}
\label{sec:coorbit}
This section considers coorbit spaces associated with integrably admissible dilation groups. 
In the first subsection the spaces are formally defined and several of their basic properties are proven. 
Section \ref{sec:discretization} is devoted to several discretization results. 

\subsection{Definition and basic properties}

\begin{definition} \label{def:coorbit}
Let $H \leq \mathrm{GL}(d, \mathbb{R})$ be integrably admissible 
with essential frequency support $\mathcal{O}$.
Let $v : G \to (0, \infty)$ be an admissible weight 
and let $Y = L^{p,q}_v (G)$ for $p, q \in [1,\infty]$. 
For a fixed admissible vector $\psi \in \mathcal{S}_{\mathcal{O}} (\mathbb{R}^d)$,
the associated \emph{coorbit space} is defined by
\begin{align} \label{eq:definition_coorbit}
 \CoSYpsi = \bigg \{ f \in \SOdual
: W_{\psi} f \in Y \bigg\}
\end{align}
and will be equipped with the norm $\| f \|_{\CoSYpsi} = \| W_{\psi} f \|_Y$. 
\end{definition}

In settling the basic properties of coorbit spaces, 
we first prove that the subspace
\[
Y_{\psi} := \{ F \in Y \; : \; F = F \ast W_{\psi} \psi \}
\]
is a reproducing kernel Banach space in $Y$,
provided that $\psi \in L^2 (\RR^d)$
is an analyzing vector.

\begin{lemma} \label{lem:reproducing}
Let $H \leq \mathrm{GL}(d, \mathbb{R})$ be integrably admissible 
with essential frequency support $\mathcal{O}$. 
Let $v : G \to (0, \infty)$ be an admissible weight 
and let $Y = L^{p,q}_v (G)$ for $p, q \in [1,\infty]$. 
Then, for $\psi \in \mathcal{S}_{\mathcal{O}} (\RR^d)$,
the space $Y_{\psi}$ is closed in $Y$. 
\end{lemma}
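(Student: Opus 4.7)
My plan is to realize $Y_\psi$ as the kernel of a bounded linear operator on $Y$, which is automatically closed. Specifically, I would introduce the right-convolution operator
\[
C_\psi : Y \to Y, \quad F \mapsto F \ast W_\psi \psi,
\]
and show that it is bounded. Then $Y_\psi = \ker(\mathrm{Id} - C_\psi)$ is closed as the preimage of $\{0\}$ under the bounded (hence continuous) map $\mathrm{Id} - C_\psi$.

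The boundedness of $C_\psi$ follows by combining two facts from the preliminaries. First, by Proposition \ref{prop:admissible_weight_properties}(ii), there exists an admissible weight $w : G \to \mathbb{R}^+$ (which we may take to be a control weight for $Y$, obtained from Lemma \ref{lem:control_weight}) such that the convolution estimate
\[
\|F_1 \ast F_2\|_{L^{p,q}_v} \leq \|F_1\|_{L^{p,q}_v} \, \|F_2\|_{L^1_w}
\]
holds for all $F_1 \in Y$ and $F_2 \in L^1_w(G)$. Second, since $\psi \in \mathcal{S}_{\mathcal{O}}(\mathbb{R}^d)$ and $w$ is admissible, Proposition \ref{prop:amalgam_estimate} applied with the weight $w$ yields
\[
W_\psi \psi \in W^R(L^\infty, L^1_w)(G) \hookrightarrow L^1_w(G).
\]
Combining these, $\|C_\psi F\|_Y \leq \|W_\psi \psi\|_{L^1_w} \|F\|_Y$, so $C_\psi$ is bounded on $Y$.

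With $C_\psi$ bounded, the operator $\mathrm{Id} - C_\psi : Y \to Y$ is continuous, so $Y_\psi = (\mathrm{Id} - C_\psi)^{-1}(\{0\})$ is closed in $Y$. I do not anticipate any serious obstacle here: the argument is a standard continuity-of-convolution argument and all the non-trivial input — existence of an admissible control weight and the amalgam-space membership of $W_\psi \psi$ — has already been packaged in Proposition \ref{prop:admissible_weight_properties} and Proposition \ref{prop:amalgam_estimate}. The only point that deserves a brief comment is the choice of $w$: one should fix once and for all an admissible control weight dominating the weight produced by Lemma \ref{lem:control_weight}, so that the same $w$ can be used both for the convolution estimate on $Y$ and for the amalgam estimate on $W_\psi \psi$.
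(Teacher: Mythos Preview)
Your argument is correct. Both proofs ultimately show that the map $F \mapsto F \ast W_\psi \psi$ is continuous in a sense strong enough to force the kernel of $\mathrm{Id} - C_\psi$ to be closed, but they do it differently. The paper works pointwise: it passes to an a.e.\ convergent subsequence, uses $W_\psi \psi \in L^{p',q'}_{1/v}(G)$ (from Lemma~\ref{lem:coefficient_estimate}(i)) together with the generalized H\"older inequality (Proposition~\ref{prop:admissible_weight_properties}(iii)) to see that each evaluation $(F \ast W_\psi \psi)(g)$ is a bounded linear functional on $Y$, and then estimates $|F(g) - (F \ast W_\psi \psi)(g)|$ directly. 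Your route instead exploits the $L^1_w$-module structure of $Y$ (Proposition~\ref{prop:admissible_weight_properties}(ii)) together with $W_\psi \psi \in L^1_w(G)$ (Proposition~\ref{prop:amalgam_estimate}) to conclude that $C_\psi$ is bounded in operator norm, whence $Y_\psi = \ker(\mathrm{Id} - C_\psi)$ is closed. Your approach is shorter and more conceptual, and avoids the subsequence argument; the paper's approach has the minor advantage of showing that $(F \ast W_\psi \psi)(g)$ is defined for \emph{every} $g$, a fact used later in Proposition~\ref{prop:coorbit_basic}. Your remark about the choice of $w$ is handled automatically: Proposition~\ref{prop:amalgam_estimate} applies to \emph{any} admissible weight, in particular to the control weight furnished by Proposition~\ref{prop:admissible_weight_properties}(ii).
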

\begin{proof}
Let $(F_n)_{n \in \mathbb{N}}$ be a sequence in $Y_{\psi}$
converging to some $F \in Y$. Then there exists a subsequence
$(F_{n_k})_{k \in \mathbb{N}}$ converging to $F \in Y$ $\mu_G$-almost everywhere.
Since $W_{\psi} \psi \in L^{p', q'}_{1/v} (G)$ by Lemma \ref{lem:coefficient_estimate}(i), 
it follows from Proposition \ref{prop:admissible_weight_properties}(iii) that 
$(F \ast W_{\psi} \psi) (g)$ is well-defined for every $g \in G$. 
Hence, for $\mu_G$-a.e. $g \in G$, 
\begin{align*}
&|F (g) - (F \ast W_{\psi} \psi)(g) | \\
&\quad \quad \leq |F(g) - F_{n_k} (g)| 
+ |F_{n_k} (g) - (F_{n_k} \ast W_{\psi} \psi) (g) |
+ |(F_{n_k} \ast W_{\psi} \psi) (g) - (F \ast W_{\psi} \psi) (g) | \\
& \quad \quad \leq |F(g) - F_{n_k} (g) | + \| W_{\psi} \psi \|_{L^{p',q'}_{1/v}} \|F_{n_k} - F \|_{L^{p,q}_v},
\end{align*}
since $F_{n_k} - (F_{n_k} \ast W_{\psi} \psi)  = 0$ $\mu_G$-a.e. on $G$. 
From this it follows easily that $F = F \ast W_{\psi} \psi$ $\mu_G$-a.e. on $G$, 
and thus $F \in Y_{\psi}$, as required. 
\end{proof}

\begin{proposition} \label{prop:coorbit_basic}
Let $\CoSYpsi$ be a coorbit space defined by 
an admissible vector $\psi \in \mathcal{S}_{\mathcal{O}} (\RR^d)$.
Then the following assertions hold:
\begin{enumerate}[(i)]
\item The space $\CoSYpsi$ is a $\pi$-invariant Banach space. 
\item The space $\CoSYpsi$ is independent of the chosen admissible vector,
 i.e., for any two
admissible vectors $\psi_1, \psi_2 \in \mathcal{S}_{\mathcal{O}} (\mathbb{R}^d)$,
it holds $\CoSYone = \CoSYtwo$, with equivalent norms. 
\item The map $W_{\psi} : \CoSYpsi \to Y_{\psi}$ is an isometric isomorphism. 
\end{enumerate}
\end{proposition}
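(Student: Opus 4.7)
The plan is to establish the three parts in the stated order, using as main ingredients the inversion identity implicit in Lemma~\ref{lem:technical_lemma1}(i), the closedness of $Y_\psi$ from Lemma~\ref{lem:reproducing}, and the convolution-module and amalgam estimates of Propositions~\ref{prop:admissible_weight_properties} and~\ref{prop:amalgam_estimate}. For part~(i), linearity of $W_\psi$ reduces the norm property of $\| \cdot \|_{\CoSYpsi}$ to definiteness, which follows from the identity $\langle f, \phi \rangle = \int_G W_\psi f(g) \cdot \overline{W_\psi \phi(g)} \, d_G(g)$ obtained by specializing Lemma~\ref{lem:technical_lemma1}(i) to $\psi_1=\psi$, $\psi_2=\phi$: if $W_\psi f \equiv 0$ then $\langle f, \phi\rangle = 0$ for every $\phi \in \mathcal{S}_{\CalO}(\RR^d)$. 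For completeness, a Cauchy sequence $(f_n)$ yields a sequence $W_\psi f_n$ converging in $Y$ to some $F \in Y$; since each $W_\psi f_n$ belongs to $Y_\psi$ by the reproducing formula of Lemma~\ref{lem:technical_lemma1}(ii), and $Y_\psi$ is closed by Lemma~\ref{lem:reproducing}, also $F \in Y_\psi$. I would then define $f \in \SOdual$ by $\langle f, \phi \rangle := \int_G F(g) \overline{W_\psi \phi(g)} \, d_G(g)$, which is a well-defined antilinear functional by Hölder (Proposition~\ref{prop:admissible_weight_properties}(iii)) once $W_\psi \phi \in L^{p',q'}_{1/v}(G)$ is verified. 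A direct computation using $F = F \ast W_\psi \psi$ together with the involution identity $W_\psi \psi(g^{-1}) = \overline{W_\psi \psi(g)}$ then gives $W_\psi f(g_0) = (F \ast W_\psi \psi)(g_0) = F(g_0)$, so $f_n \to f$ in $\CoSYpsi$. Finally, $\pi$-invariance is immediate from the pointwise identity $W_\psi[\pi(g_0) f] = L_{g_0} W_\psi f$ together with the boundedness of left translation on $Y$ (Proposition~\ref{prop:admissible_weight_properties}(i)).

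Part~(ii) is a direct consequence of the change-of-window formula $W_{\psi_2} f = W_{\psi_1} f \ast W_{\psi_2} \psi_1$ of Lemma~\ref{lem:technical_lemma1}(ii). By Proposition~\ref{prop:amalgam_estimate}, applied to the pair $(\psi_1, \psi_2)$, $W_{\psi_2}\psi_1$ lies in $L^1_w(G)$ for any control weight $w$ associated with $Y$ as in Lemma~\ref{lem:control_weight}, and Proposition~\ref{prop:admissible_weight_properties}(ii) then yields $\| f \|_{\CoSYtwo} \leq \| W_{\psi_2} \psi_1 \|_{L^1_w} \| f \|_{\CoSYone}$; interchanging the roles of $\psi_1$ and $\psi_2$ gives the reverse inequality. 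For part~(iii), the map $W_\psi \colon \CoSYpsi \to Y_\psi$ is an isometry by the very definition of the norm, its image is contained in $Y_\psi$ by the reproducing formula of Lemma~\ref{lem:technical_lemma1}(ii), and surjectivity is precisely the reconstruction step already carried out in the completeness argument for part~(i).

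The only genuinely delicate point is this reconstruction: that the candidate $f$ defined above is continuous on $\mathcal{S}_{\CalO}(\RR^d)$, so that $f \in \SOdual$. This requires controlling $\| W_\psi \phi \|_{L^{p',q'}_{1/v}}$ uniformly by the seminorms $\|\widehat{\phi}\|_{K,N}$ for $\widehat{\phi}$ supported in a fixed compact $K \subset \CalO$. This control is available through the pointwise decay estimate of Lemma~\ref{lem:coefficient_estimate}(i): the $H$-support of $W_\psi \phi$ is confined to $((\supp\widehat{\psi},K))$, which is relatively compact by Corollary~\ref{cor:KKcp}, and on this compact set the weight $1/v$ is bounded while the $\RR^d$-integral decays at any polynomial rate at the cost of a seminorm factor $\|\widehat{\phi}\|_{K,N}$. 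This is essentially the same mechanism as in the proof of Proposition~\ref{prop:amalgam_estimate}. Once this estimate is in place, every remaining step is a formal manipulation with the reproducing formula and the module structure of $Y$.
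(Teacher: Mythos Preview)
Your proposal is correct and follows essentially the same route as the paper's proof: definiteness via the inversion identity of Lemma~\ref{lem:technical_lemma1}(i), completeness via closedness of $Y_\psi$ (Lemma~\ref{lem:reproducing}) combined with the reconstruction functional $\phi \mapsto \int_G F(g)\,\overline{W_\psi\phi(g)}\,d\mu_G(g)$, independence of the window via the change-of-window convolution identity, and surjectivity of $W_\psi$ by recycling the reconstruction step. If anything, you are slightly more careful than the paper in spelling out why the reconstruction functional is continuous on $\mathcal{S}_{\mathcal{O}}(\RR^d)$, invoking the compactness of $((\supp\widehat{\psi},K))$ from Corollary~\ref{cor:KKcp}; the paper simply asserts well-definedness via Lemma~\ref{lem:coefficient_estimate}(i).
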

\begin{proof}
(i). The subadditivity and absolute homogeneity of $\| \cdot \|_{\CoSYpsi}$ 
are clear. To show that $\| \cdot \|_{\CoSY}$ is positive definite, let $f \in \CoSYpsi$ and
suppose that $\| f \|_{\CoSYpsi} = 0$. 
Then $\langle f, \pi(x,h) \psi \rangle = 0$ for $\mu_G$-a.e. $(x,h) \in G$. 
Using this, together with the identity \eqref{eq:weak_continuity}, yields that
\[
\langle f, \phi \rangle = \int_G \langle f, \pi(g) \psi \rangle 
	\langle \pi(g) \psi, \phi \rangle \; d\mu_{G} (g) = 0
\]
for arbitrary $\phi \in \mathcal{S}_{\mathcal{O}} (\RR^d)$. 
Thus $f = 0$, showing that $\| \cdot \|_{\CoSYpsi}$ is indeed a norm. 

For the completeness of $\CoSYpsi$, 
let $(f_n)_{n \in \mathbb{N}}$ be a Cauchy sequence in $\CoSYpsi$. 
Then $(W_{\psi} f_n)_{n \in \mathbb{N}}$ is a Cauchy sequence in 
$Y_{\psi}$, and hence converges to some $F \in Y_{\psi}$ by Lemma \ref{lem:reproducing}.
 The map
$W_{\psi} : \mathcal{S}_{\mathcal{O}} (\mathbb{R}^d) \to L^{p',q'}_{1/v} (G)$  
is well-defined by Lemma \ref{lem:coefficient_estimate}(i). Combining this with 
Proposition \ref{prop:admissible_weight_properties}(iii)
yields that 
\[ f : \mathcal{S}_{\mathcal{O}} (\mathbb{R}^d) \to \mathbb{C}, 
\quad \phi \mapsto 
\int_G F(g) \langle \pi(g) \psi, \phi \rangle \; d\mu_G (g)
\]
is a well-defined element of $\SOdual$. 
Evaluating $f \in \SOdual$ on $\phi = \pi(y) \psi$, 
$y \in G$, gives
\begin{align*}
W_{\psi} f (y) = \int_G F(g) \langle \pi (g) \psi, \pi(y) \psi \rangle \; d\mu_G (g) 
= F \ast W_{\psi} \psi (y) = F(y). 
\end{align*}
From this it easily follows that $\CoSYpsi$ is complete, and thus a Banach space.  
The $\pi$-invariance of $\CoSYpsi$ follows directly from the left translation-invariance of $Y$. 

(ii). Let $\psi_1, \psi_2 \in \mathcal{S}_{\mathcal{O}} (\mathbb{R}^d)$ be admissible.
Then $W_{\psi_2} \psi_1 \in L^{1}_{w} (G)$ for any admissible control weight $w$ 
by Proposition \ref{prop:amalgam_estimate}. 
Thus,  if $W_{\psi_1} f \in Y$, 
then 
$W_{\psi_2} f =W_{\psi_1} f \ast W_{\psi_2} \psi_1 \in Y \ast L^1_w (G) \hookrightarrow Y$,
with
\[
\| W_{\psi_2} f \|_{Y} = \| W_{\psi_1} f \ast W_{\psi_2} \psi_1 \|_Y \leq \| W_{\psi_2} \psi_2 \|_{L^1_w} \| W_{\psi_1} f \|_{Y}. 
\]
by Lemma \ref{prop:admissible_weight_properties}(ii). 
Reversing the role of $\psi_1$ and $\psi_2$ yields $\| W_{\psi_1} f \|_Y \asymp \| W_{\psi_2} f \|_Y$, 
as required.  

(iii). We prove the surjectivity of $W_{\psi} : \CoSYpsi \to Y_{\psi}$. 
The inclusion $W_{\psi} (\CoSYpsi) \subseteq Y_{\psi}$ follows by 
Lemma \ref{lem:technical_lemma1}(ii). 
For the reverse, let $F \in Y_{\psi}$.
Then similar arguments as in (i) show that the map
$
\phi \mapsto \int_G F(g) \langle \pi(g) \psi, \phi \rangle \; d\mu_G (g)
$
defines an element $f \in \SOdual$ satisfying
$W_{\psi} f = F \ast W_{\psi} \psi = F \in Y$, and thus $f \in \CoSYpsi$.  
\end{proof}

In light of the above independence result, 
we will omit the dependence of $\CoSYpsi$ on $\psi \in \mathcal{S}_{\mathcal{O}} (\RR^d)$ 
and simply write $\CoSY = \CoSYpsi$ in the remainder. 

\subsection{Discretizations of the reproducing formula}  \label{sec:discretization}
In this section the results on the discretization of convolution operators as developed in 
\cite{feichtinger1989banach1, grochenig1991describing} will be applied to the current setting. 
For this, we adopt the following terminology. 

\begin{definition}
Let $G$ be a locally compact group, let $X = (x_i)_{i \in I} \subseteq G$ be a countable, discrete set 
and let $U \subseteq G$ be a unit neighborhood. 
\begin{enumerate}[(i)]
\item The set $X = (x_i)_{i \in I}$  is called \emph{$U$-dense} in $G$ if $\bigcup_{i \in I} x_i U = G$.
\item The set $X = (x_i)_{i \in I}$ is called \emph{$U$-separated} in $G$ if $(x_i U)_{i \in I}$ is 
pairwise disjoint. The set $X$ is called \emph{separated} if it is $U$-separated
for some unit neighborhood $U \subseteq G$ and is called \emph{well-spread} if it is $U$-dense 
and separated. 
\item The set $X = (x_i)_{i \in I}$ is \emph{relatively separated} if it is a finite union of
separated sets. 
\end{enumerate}
\end{definition}

\begin{remark}
In $G = \mathbb{R}^d \rtimes H$, the typical discrete sets $X \subseteq G$ 
possess the form
\[
X = \{ (h_j x_k, h_j ) \; | \; j \in J, k \in I \}
\]
for  $U$-dense and separated sets $(h_j)_{j \in J} \subseteq H$ and 
$(x_k)_{k \in I} \subseteq \RR^d$. 
Any such set $X$ can be shown to be uniformly dense and separated in $G$. 
\end{remark}

\begin{definition}
Let $Y = L^{p,q}_v (G)$ for $p,q \in [1,\infty]$ and some weight $v : G \to \mathbb{R}^+$. 
Let $X = (x_i)_{i \in I} \subseteq G$ 
be a relatively separated set. For a relatively compact neighborhood of the identity
$U \subseteq G$, define
\[
\big\| (c_i)_{i \in I} \big\|_{Y_d (X)} := \bigg\| \sum_{i \in I} |c_i| \; \mathds{1}_{x_i U} \bigg\|_Y
\]
and $Y_d (X) = \{ (c_i)_{i \in I} \in \CC^I \; : \; \|(c_i)_{i \in I} \|_{Y_d (X)} < \infty \}$. 
\end{definition}

For further reference, we remark some basic properties of $Y_d (X)$ to be used in the sequel. 

\begin{remark}
\begin{enumerate}[(i)]
\item The sequence space $Y_d (X)$ forms a well-defined Banach space, 
independent of the choice of the defining neighborhood $U$. 
\item The norm $\| \cdot \|_{Y_d (X)}$ is equivalent to a weighted $\ell^{p,q}$-norm. 
In particular, for the Lebesgue space $Y = L^p (G)$, the associated sequence space is $Y_d (X) = \ell^p (X)$
for all $p \in [1,\infty]$. 
\end{enumerate}
\end{remark}

The following result on atomic decompositions and frames is obtained by applying the
general discretization procedure of convolution operators in
\cite[Section 5]{feichtinger1989banach1} and \cite[Section 4]{grochenig1991describing} to the integrable  
kernel $ W_{\psi} \psi$.
The proofs are similar to 
\cite[Theorem S]{grochenig1991describing} and \cite[Theorem T]{grochenig1991describing}.

\begin{theorem} \label{thm:atomic}
Suppose that 
$\psi \in \mathcal{S}_{\mathcal{O}} (\RR^d)$ is admissible. 
Then there exists a unit neighborhood $U \subset G$ such that,
 for any $U$-dense and relatively separated set $X = (x_i)_{i \in I}$,
the following assertions hold:
\begin{enumerate}[(i)]
\item Given $f \in \CoSY$, there exists $(c_i (f))_{i \in I} \in Y_d (X)$ such that
\begin{align} \label{eq:atomic}
f = \sum_{i \in I} c_i (f) \pi(x_i) \psi, 
\end{align}
and
$\| (c_i (f))_{i \in I} \|_{Y_d} \lesssim \| f \|_{\CoSY}$, 
with an implicit constant only depending on $\psi \in \mathcal{S}_{\mathcal{O}} (\RR^d)$. 
\item Every $(c_i)_{i \in I} \in Y_d (X)$ defines an element 
\begin{align} \label{eq:reverse_atomic}
f = \sum_{i \in I} c_i \pi(x_i) \psi 
\end{align}
in $\CoSY$ with $\| f \|_{\CoSY} \lesssim \|(c_i)_{i \in I} \|_{Y_d}$. 
\item The norm equivalence
\begin{align} \label{eq:banach_frame}
 \| f \|_{\CoSY} \asymp \big\| ( \langle f, \pi(x_i) \psi \rangle)_{i \in I} \|_{Y_d} 
\end{align}
holds for all $f \in \CoSY$. 
\end{enumerate}
The series \eqref{eq:atomic} and \eqref{eq:reverse_atomic} converge unconditionally 
in the norm of $\CoSY$ if $p,q \in [1,\infty)$ and in the weak$^*$-topology, otherwise. 
\end{theorem}
\begin{proof}
By assumption, we have $W_{\psi} \psi \in W^R (L^{\infty}, L^1_w)$ for a control weight $w : G \to \mathbb{R}^+$.

(i) An application of  \cite[Theorem 4.10]{grochenig1991describing} yields the invertibility
of the operator 
\[ 
T_{\Phi} : Y_{\psi} \to Y_{\psi}, \quad F \mapsto \sum_{i \in I} \langle \phi_i, F \rangle L_{x_i} W_{\psi} \psi
\]
for some partition of unity $\Phi = (\phi_i)_{i \in I}$ subordinate to $U \subseteq G$. 
Applying this to $F = W_{\psi} f \in Y_{\psi}$ for $f \in \CoSY$ yields that
\[
F = T_{\Phi} T_{\Phi}^{-1} F = \sum_{i \in I} \langle \phi_i, T_{\Phi}^{-1} F \rangle L_{x_i} W_{\psi} \psi. 
\]
Pulling this back to $\CoSY$ through 
$W^{-1}_{\psi} : Y_{\psi} \to \CoSY$ gives
$
f = \sum_{i \in I} \langle \phi_i, T_{\Phi}^{-1} W_{\psi} f\rangle \pi (x_i) \psi
$. 
The fact that $(\langle \phi_i, T_{\Phi}^{-1} W_{\psi} f \rangle)_{i \in I} \in Y_d (X)$ 
follows directly from \cite[Proposition 5.1]{feichtinger1989banach1}. 

(ii) By \cite[Lemma 5.2]{feichtinger1989banach1}, the mapping 
\begin{align} \label{eq:synthesis}
(c_i)_{i \in I} \mapsto \sum_{i \in I} c_i L_{x_i} W_{\psi} \psi 
\end{align}
is well-defined and bounded from $Y_d (X)$ into $Y$, 
with convergence in $Y$ if $p,q \in [1,\infty)$ and point-wise, otherwise. 
Let $(c_i)_{i \in I} \in Y_d (X) \hookrightarrow \ell^{\infty} (I)$ be arbitrary. 
Define $F \in L^{\infty} (G)$ by
\[
F(x) = W_{\psi} \bigg( \sum_{i \in I} c_i \pi(x_i) \psi \bigg) (x) = \sum_{i \in I} c_i L_{x_i} W_{\psi} \psi (x)
\] 
for $x \in G$. Choose an enumeration $(i_n)_{n \in \mathbb{N}}$ of the index set $I$ and
define $f_n \in \mathcal{S}_{\mathcal{O}} (\mathbb{R}^d)$ 
as
\begin{align} \label{eq:fn_density}
f_n := \sum_{\ell = 1}^n c_{i_{\ell}} \pi (x_{i_{\ell}}) \psi. 
\end{align}
Then $W_{\psi} f_n (x) = \sum_{\ell = 1}^n c_{i_{\ell}} L_{x_{i_{\ell}}} W_{\psi} \psi (x)$, 
and the triangle inequality and solidity of $Y$ show  that $f_n \in \CoSY$.  
By the Lebesgue dominated convergence theorem, 
\begin{align*}
\lim_{n \to \infty} \langle f_n, \phi \rangle 
&=  \int_G \lim_{n \to \infty} W_{\psi} f_n (x) \langle \pi(x,h) \psi, \phi \rangle \; d\mu_G (x,h) 
= \int_G F(x) \langle \pi(x,h) \psi, \phi \rangle \; d\mu_G (x,h) \\
&= \langle f, \phi \rangle
\end{align*}
for all $\phi \in \mathcal{S}_{\mathcal{O}} (\mathbb{R}^d)$, 
where $f := \sum_{i \in I} c_i \pi (x_i) \psi \in \SOdual$.  
Thus $f_n \to f$ in the $w^*$-topology of $\SOdual$ as $n \to \infty$.
Moreover, if $p, q \in [1,\infty)$, then the norm convergence of the series in \eqref{eq:synthesis}
yields that $f_n \to f$ in $\CoSY$ as $n \to \infty$. 

(iii)  An application of  \cite[Theorem 4.11]{grochenig1991describing} yields the invertibility
of the operator 
\[ 
S_{\Phi} : Y_{\psi} \to Y_{\psi}, \quad F \mapsto \sum_{i \in I} F(x_i) \phi_i \ast W_{\psi} \psi
\]
for a suitable partition of unity $\Phi = (\phi_i)_{i \in I}$ subordinate to $U \subseteq G$. 
In particular, for $F = W_{\psi} f$ with $f \in \CoSY$, we obtain  
\[
F = S_{\Phi}^{-1} S_{\Phi} F = S^{-1}_{\Phi} \bigg( 
\sum_{i \in I} F(x_i) \phi_i \ast W_{\psi} \psi \bigg)
\]
and thus
$
f = W_{\psi}^{-1} S_{\Phi}^{-1} \big(\sum_{i \in I} \langle f, \pi(x_i) \psi \rangle \phi_i \ast W_{\psi} \psi \big).
$
The norm equivalence \eqref{eq:banach_frame} follows from this by a direct calculation:
\begin{align*}
\| f \|_{\CoSY} &\leq \| S_{\Phi}^{-1} \|_{Y_{\psi} \to Y_{\psi}} \bigg\| \sum_{i \in I} F(x_i) \phi_i \ast W_{\psi} \psi \bigg\|_{Y} \\
&\leq \| S_{\Phi}^{-1} \|_{Y_{\psi} \to Y_{\psi}} \bigg\| \sum_{i \in I} F(x_i) \phi_i \bigg\|_Y \| W_{\psi} \psi \big\|_{L^1_w} \\ 
&\leq \| S_{\Phi}^{-1} \|_{Y_{\psi} \to Y_{\psi}} \big\| \big( F(x_i) \big)_{i \in I} \big\|_{Y_d (X)} \| W_{\psi} \psi \big\|_{L^1_w} \\
&\lesssim \| S_{\Phi}^{-1} \|_{Y_{\psi} \to Y_{\psi}} \big\|f \big\|_{\CoSY} \| W_{\psi} \psi \big\|_{L^1_w},
\end{align*}
where the last line follows from the boundedness of 
$\CoSY \ni f \mapsto (W_{\psi} f (x_i))_{i \in I} \in Y_d (X)$, 
with the implicit constant independent of $f$, e.g., see \cite[Theorem 8.1]{feichtinger1989banach2}. 
\end{proof}

\begin{corollary} \label{cor:density_coorbit}
Let $H \leq \mathrm{GL}(d, \mathbb{R})$ be integrably admissible 
with essential frequency support $\mathcal{O}$. The space $\mathcal{S}_{\mathcal{O}} (\RR^d)$
is norm dense in $\CoSY$ if $p, q \in [1,\infty)$ and $w^*$-dense, otherwise. 
Moreover, for any $f \in \CoSY$, there exists a sequence $(f_n)_{n \in \mathbb{N}}$ in $\mathcal{S}_{\mathcal{O}} (\RR^d)$ with $f_n \in \mathcal{S}_{\mathcal{O}} (\RR^d)$ 
such that $f_n \to f$ in $\SOdual$ 
and a constant $C>0$ such that $\| f_n \|_{\CoSY} \leq C \| f \|_{\CoSY}$ 
for all $n \in \mathbb{N}$. 
\end{corollary}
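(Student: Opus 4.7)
The plan is to deduce both density claims, together with the uniform norm bound, directly from the atomic decomposition in Theorem \ref{thm:atomic}. The key observation is that the partial sums constructed in the proof of that theorem already lie in $\mathcal{S}_{\mathcal{O}}(\mathbb{R}^d)$, since $\psi \in \mathcal{S}_{\mathcal{O}}(\mathbb{R}^d)$ and this space is $\pi$-invariant, so they are the natural candidates for the approximating sequence $(f_n)_{n \in \mathbb{N}}$.

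Concretely, fix $f \in \CoSY$. Applying Theorem \ref{thm:atomic}(i) with some suitable well-spread set $X = (x_i)_{i \in I}$ produces coefficients $(c_i(f))_{i \in I} \in Y_d(X)$ with $\|(c_i(f))\|_{Y_d} \leq C_1 \|f\|_{\CoSY}$ and $f = \sum_{i \in I} c_i(f) \pi(x_i)\psi$. Choosing an enumeration $(i_n)_{n \in \mathbb{N}}$ of $I$, I define
\[
f_n := \sum_{\ell=1}^n c_{i_\ell}(f) \, \pi(x_{i_\ell}) \psi \in \mathcal{S}_{\mathcal{O}}(\mathbb{R}^d).
\]
If $p,q \in [1,\infty)$, the series in Theorem \ref{thm:atomic}(i) converges unconditionally in the norm of $\CoSY$, so $f_n \to f$ in $\CoSY$, giving norm density. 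In general, the partial sums converge to $f$ in the weak$^*$-topology (this was established in the proof of Theorem \ref{thm:atomic}(ii) for synthesis, and the same argument applies here since $W_\psi \psi \in W^R(L^\infty, L^1_w)$ ensures absolute convergence of the defining integrals), so $\mathcal{S}_{\mathcal{O}}(\mathbb{R}^d)$ is always $w^*$-dense in $\CoSY$.

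The uniform bound $\|f_n\|_{\CoSY} \leq C \|f\|_{\CoSY}$ is obtained by applying the synthesis bound Theorem \ref{thm:atomic}(ii) to the truncated coefficient sequence $(c_i^{(n)})_{i \in I}$ defined by $c_i^{(n)} = c_i(f)$ if $i \in \{i_1,\ldots,i_n\}$ and $c_i^{(n)} = 0$ otherwise. By solidity of $Y$ (Proposition \ref{prop:admissible_weight_properties}(i)) and the definition of $\| \cdot \|_{Y_d}$,
\[
\|(c_i^{(n)})\|_{Y_d} \leq \|(c_i(f))\|_{Y_d} \leq C_1 \|f\|_{\CoSY},
\]
and therefore $\|f_n\|_{\CoSY} \leq C_2 \|(c_i^{(n)})\|_{Y_d} \leq C_1 C_2 \|f\|_{\CoSY}$ with $C := C_1 C_2$ independent of $n$ and $f$.

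The only mildly delicate point is the $w^*$-convergence of $f_n \to f$ in $\SOdual$, which does not immediately follow from $f = \sum_i c_i(f)\pi(x_i)\psi$ understood only at the level of coefficient sequences in $Y_d(X)$. However, the dominated convergence argument carried out in the proof of Theorem \ref{thm:atomic}(ii) applies verbatim: pairing $f_n$ with an arbitrary $\phi \in \mathcal{S}_{\mathcal{O}}(\mathbb{R}^d)$ and using the pointwise domination $|W_\psi f_n(x)| \leq \sum_i |c_i(f)| \, |L_{x_i} W_\psi \psi (x)|$, which lies in $L^\infty_{\text{loc}}$ and pairs absolutely against $W_\psi \phi \in L^1(G)$ via the generalized Hölder inequality, justifies passing the limit inside the integral and yields $\langle f_n, \phi\rangle \to \langle f, \phi\rangle$. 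This completes the proof.
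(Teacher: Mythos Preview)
Your proof is correct and follows essentially the same approach as the paper: use the atomic decomposition from Theorem \ref{thm:atomic} to obtain coefficients $(c_i(f))_{i\in I}$, take partial sums $f_n$ of the expansion (which lie in $\mathcal{S}_{\mathcal{O}}(\mathbb{R}^d)$), and combine the synthesis bound with solidity of $Y_d$ applied to the truncated coefficients to get $\|f_n\|_{\CoSY} \le C_1 C_2 \|f\|_{\CoSY}$. Your additional paragraph on the $w^*$-convergence is more explicit than the paper, which simply invokes the convergence assertion already contained in the statement of Theorem \ref{thm:atomic}.
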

\begin{proof}
Let $f \in \CoSY$ and $\psi \in \mathcal{S}_{\mathcal{O}} (\RR^d)$.
Then $f = \sum_{i \in I} c_i (f) \pi (x_i) \psi$ for some coefficients $(c_i (f))_{i \in I} \in Y_d (X)$
 by Theorem \ref{thm:atomic}. Let $f_n \in \mathcal{S}_{\mathcal{O}} (\RR^d)$ be as in \eqref{eq:fn_density}. 
 Then  $f_n \to f$ as $n \to \infty$
  in the norm topology of $\CoSY$ if $p,q \in [1,\infty)$ and in the $w^*$-topology, otherwise. 
  
Lastly, by Theorem \ref{thm:atomic}, 
there exist constants $C_1, C_2 > 0$ such that
\[
\| f_n \|_{\CoSY} \leq C_2 \| (c_i (f))_{i \in I} \mathds{1}_{\{i_1, ..., i_n \}} \|_{Y_d} 
\leq C_1 C_2 \| f \|_{\CoSY},
\]
which yields the claim. 
\end{proof}

For other combined results on frames and atomic decompositions, 
the interested reader is referred to \cite[Section 5]{grochenig1991describing}. 
As the involved discretization techniques only rely on the integrability of the representation 
and the existence of a reproducing formula \cite[Remark 6.6]{grochenig1991describing}, 
the results could also be obtained in the setting of the current paper, with similar proofs.

Dual to the results on frames and atomic decompositions, 
we show the validity of the interpolation property and the existence of $p$-Riesz sequences.
The following interpolation property of the wavelet transform is \cite[Theorem 8.2]{feichtinger1989banach2}, 
adapted to our current setting.

\begin{theorem} \label{prop:interpolation}
Suppose $\psi \in \mathcal{S}_{\mathcal{O}} (\mathbb{R}^d)$ is normalized. 
Then there exists a compact set $K \subseteq G$ such that,
for every $K$-separated set $X = (x_i)_{i \in I}$ in $G$, 
the following assertions hold:
\begin{enumerate}[(i)]
\item Given $c = (c_i)_{i \in I} \in Y_d (X)$,
there exists $f \in \CoSY$ such that $W_{\psi} f (x_i) = c_i$
for all $i \in I$, and  $\| f \|_{\CoSY} \lesssim \|c \|_{Y_d(X)}$, with an implicit 
constant only depending on $\psi \in \mathcal{S}_{\mathcal{O}} (\mathbb{R}^d)$. 
\item The norm equivalence 
\[
 \|c\|_{Y_d (X)} 
\asymp \bigg\| \sum_{i \in I} c_i \pi (x_i) \psi \bigg\|_{\CoSY}
\]
holds for all $c = (c_i)_{i \in I} \in Y_d (X)$. 
\end{enumerate}
\end{theorem} 
\begin{proof}
(i) By \cite[Theorem 7.2]{feichtinger1989banach2}, 
there exists a compact set $K \subseteq G$
such that, for every $K$-separated set $X = (x_i)_{i \in I}$, the mapping
$F \mapsto (F \ast W_{\psi} \psi (x_i))_{i \in I}$ is a bounded surjection from $Y$ onto $Y_d (X)$,
i.e., given $c = (c_i)_{i \in I} \in Y_d (X)$, 
there exists $F \in Y$ such that $(F \ast W_{\psi} \psi)(x_i) = c_i$ 
for all $i \in I$, and $\|F\|_Y \lesssim \|c\|_{Y_d (X)}$. 
Since $W_{\psi} : \CoSY \to Y_{\psi}$ is an isometric isomorphism,
the claim follows readily. 

(ii)
Let $c = (c_i)_{i \in I} \in Y_d (X)$ be arbitrary. 
Choose a $c' = (c'_i)_{i \in I} \in Y'_d (X)$ satisfying $\| c' \|_{Y'_d (X)} = 1$
and $\langle c', c \rangle = \sum_{i \in I} c'_i \overline{c_i} = \|c\|_{Y_d (X)}$. 
By part (i), there exists an $f \in \CoSYdual$ such that 
$\|f\|_{\CoSYdual} \lesssim 1$ and $W_{\psi} f (x_i) = c'_i$ for all $i \in I$. 
Hence
\begin{align*}
\|c \|_{Y_d (X)} &= \sum_{i \in I} c'_i \overline{c_i} 
= \bigg \langle f, \sum_{i \in I} c_i \pi (x_i) \psi \bigg \rangle 
\leq \|f\|_{\CoSYdual} \bigg\| \sum_{i \in I} c_i \pi(x_i) \psi \bigg\|_{\CoSY} \\
&\lesssim \bigg \| \sum_{i \in I} c_i \pi(x_i) \psi \bigg\|_{\CoSY}.
\end{align*}
The upper bound follows  by Theorem \ref{thm:atomic}. 
\end{proof}

\section{Besov-type decomposition spaces associated to induced covers}
\label{sec:decomposition}
This section is devoted to decomposition spaces and the main ingredients 
for defining these spaces, namely admissible coverings and associated
subordinate partitions of unity. In particular, we will construct
covers and partitions of unity that are suitable for identifying coorbit spaces 
as decomposition spaces. 

\subsection{Admissible and induced covers}

We  allow for admissible covers of a rather general type \cite{feichtinger1985banach}. 

\begin{definition}\label{def:AdmissibleCovering}
A family $\mathcal{Q} = (Q_i)_{i \in I}$ of non-empty subsets $Q_i \subset \mathcal{O}$ is
called an \emph{admissible covering} of the open subset $\mathcal{O} \subset \RHat$, if
\begin{enumerate}
  \item[(i)] $\mathcal{Q}$ is a covering of $\mathcal{O}$, i.e,
             $\mathcal{O} = \bigcup_{i \in I} Q_i$;

  \item[(ii)] $N_{\mathcal{Q}} := \sup_{i \in I} |i^*| < \infty$,
 where $i^* := \{ \ell \in I \; : \; Q_{\ell} \cap Q_i \neq \emptyset \}$.
\end{enumerate}
\end{definition}

The aim of this subsection is to construct a cover
of the essential frequency support $\mathcal{O}$ 
that is induced by the action of the associated 
integrably admissible dilation group $H \leq \mathrm{GL}(d, \mathbb{R})$.  
The construction relies on the following technical lemmata,
which extend corresponding results in \cite{fuehr2015wavelet} to the setting of 
integrably admissible dilation groups. 

\begin{lemma} \label{lem:neighborhoods_bounded}
Let $K_1, K_2 \subseteq \mathcal{O}$ be compact. Given a relatively compact unit neighborhood $V \subseteq H$ and a $V$-separated family
$(h_i)_{i \in I} \subseteq H$, there exists a constant $C = C(K_1, K_2,V) > 0$ such that, for all $h \in H$,
\[
\# I_h (K_1, K_2) \leq C,
\]
 where $I_h(K_1, K_2) := \{ i \in I \; | \; h^{-T} K_1 \cap h_i^{-T} K_2 \neq \emptyset \} $.
\end{lemma}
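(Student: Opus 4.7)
My plan is to reduce the counting problem to a volume comparison using a left Haar measure on $H$, after showing that the relevant $h_i$'s all lie in a single left translate of a fixed compact set.

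First I would rewrite the defining condition for $i \in I_h$ in terms of the sets $((\cdot,\cdot))$ from the paper. The condition $h^{-T}K \cap h_i^{-T}K \neq \emptyset$ is equivalent to the existence of $k_1,k_2 \in K$ with $h_i^T h^{-T} k_1 = k_2$, i.e.\ $(h^{-1}h_i)^T k_1 = k_2$, which in turn says exactly that $h^{-1}h_i \in ((K,K))$. Now I would invoke Corollary~\ref{cor:KKcp}, which (since $K \subseteq \mathcal{O}$ is compact) guarantees that $L := \overline{((K,K))}$ is compact in $H$. In particular, $I_h \subseteq \{i \in I : h_i \in hL\}$ for every $h \in H$.

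Next I would exploit the $V$-separation of $(h_i)_{i\in I}$ together with left-invariance of Haar measure $\mu_H$ on $H$. Since the sets $h_i V$ are pairwise disjoint and $h_i \in hL$ implies $h_i V \subseteq hLV$, I get
\[
|I_h| \cdot \mu_H(V) \;=\; \sum_{i \in I_h} \mu_H(h_i V) \;=\; \mu_H\!\Big(\bigcup_{i \in I_h} h_i V\Big) \;\leq\; \mu_H(hLV) \;=\; \mu_H(LV).
\]
Since $L$ is compact and $V$ is relatively compact, $LV \subseteq L\overline{V}$ is contained in a compact set, so $\mu_H(LV) < \infty$. On the other hand, $\mu_H(V) > 0$ because $V$ is a (necessarily open or at least with non-empty interior) unit neighborhood. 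Setting $C := \mu_H(LV)/\mu_H(V)$ yields $|I_h| \leq C$, where $C$ depends only on $K$ and $V$, as claimed.

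There is essentially no serious obstacle here once the translation to $((K,K))$ is made; the only point that needs a minor check is that $\mu_H(V) > 0$, which I would handle by noting that any unit neighborhood contains a non-empty open set, and $\mu_H$ is positive on non-empty open sets. The key conceptual input is Corollary~\ref{cor:KKcp}, which encapsulates the integrable admissibility assumption and replaces the single-orbit argument used in the irreducibly admissible setting of \cite{fuehr2015wavelet}.
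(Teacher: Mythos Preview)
Your proposal is correct and follows essentially the same approach as the paper: both rewrite the condition $i \in I_h$ as $h_i \in hL$ with $L = ((K,K))$ relatively compact (via Corollary~\ref{cor:KKcp}), then use the disjointness of the $h_iV$ together with left-invariance of Haar measure to bound $|I_h|$ by a ratio of Haar measures depending only on $K$ and $V$. The only cosmetic differences are that the paper works with $V^\circ$ instead of $V$ and passes through finite subsets $J \subseteq I_h$ before taking the supremum, which makes the positivity of $\mu_H(V^\circ)$ automatic and sidesteps any countability question about $I_h$.
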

\begin{proof}
Let $K_1, K_2 \subseteq \mathcal{O}$ be compact and let $(h_i)_{i \in I} \subseteq H$ 
be a $V$-separated set. Set $L := ((K_1,K_2))$, which is relatively compact by Corollary \ref{cor:KKcp}. 
Fix $h \in H$. If $i \in I_h(K_1, K_2)$, then $h^{-T} K_1 \cap h_i^{-T} K_2 \not= \emptyset$, 
and thus $h_i \in h  L$. Hence 
$h_i V^{\circ} \subseteq hL\overline{V}$, and $(h_i V^{\circ})_{i \in I_h}$ is a pairwise disjoint collection
of subsets of $hL \overline{V}$. Therefore, for a finite subset $J \subseteq I_h(K_1, K_2)$, 
\[
\#J = \frac{1}{\mu_H (V^{\circ})} \sum_{i \in J} \mu_H (h_i V^{\circ}) 
= \frac{1}{\mu_H (V^{\circ})} \mu_H \bigg( \bigcup_{i \in J} h_i V^{\circ} \bigg)
\leq \frac{\mu_H (hL\overline{V})}{\mu_H (V^{\circ})} 
= \frac{\mu_H (L\overline{V})}{\mu_H (V^{\circ})}. 
\]
Since $J \subseteq I_h$ can be chosen arbitrary, the result follows. 
\end{proof}

\begin{lemma} \label{lem:existence_covering}
Suppose that $(h_i)_{i \in I}$ is a well-spread family in $H$. 
Then there exists a relatively compact set $Q \subset \RHat$ 
satisfying $\overline{Q} \subset \mathcal{O}$ and $\mathcal{O} = \bigcup_{i \in I} h_i^{-T} Q$. 
\end{lemma}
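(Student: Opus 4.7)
The plan is to construct $Q$ as a neighborhood of the compact set $\overline{V}^{-T}C$, where $V$ is a relatively compact $V$-density neighborhood for $(h_i)_{i\in I}$, and then verify the covering condition by a direct decomposition argument using the $V$-density.

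First I would fix a relatively compact open unit neighborhood $V\subseteq H$ such that $\bigcup_{i\in I} h_i V = H$. This is legitimate because the well-spreadness of $(h_i)_{i\in I}$ supplies some unit neighborhood $V_0$ with $\bigcup_{i\in I} h_i V_0 = H$, and any relatively compact open unit neighborhood containing $V_0$ works equally well; such neighborhoods exist since $H\le\mathrm{GL}(d,\mathbb{R})$ is a closed subgroup. Next I would define
\[
K := \overline{V}^{-T} C.
\]
The set $K$ is compact as the image of the compact set $\overline{V}\times C$ under the continuous map $(h,c)\mapsto h^{-T}c$. Since $H$ is closed, $\overline{V}\subseteq H$, so $\overline{V}^{-T}\subseteq H^T$, and therefore $K\subseteq H^T C = \mathcal{O}$.

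Because $\mathcal{O}\subseteq\RHat^d$ is open and $K\subset\mathcal{O}$ is compact, the distance from $K$ to the closed complement $\RHat^d\setminus\mathcal{O}$ is strictly positive, so one can choose an open relatively compact set $Q\subset\RHat^d$ with $K\subseteq Q$ and $\overline{Q}\subset\mathcal{O}$ (for example, a suitable open $\varepsilon$-neighborhood of $K$).

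It remains to verify the covering identity $\mathcal{O}=\bigcup_{i\in I} h_i^{-T} Q$. The inclusion $\bigcup_i h_i^{-T} Q\subseteq\mathcal{O}$ is automatic: $h_i^{-T}\in H^T$ and $\mathcal{O}$ is $H^T$-invariant, so $h_i^{-T}Q\subseteq h_i^{-T}\mathcal{O}=\mathcal{O}$. For the converse, let $\xi\in\mathcal{O}$. Using $\mathcal{O}=H^T C$, write $\xi = g^T c$ with $g\in H$ and $c\in C$. Apply $V$-density to $g^{-1}\in H$: there exist $i\in I$ and $v\in V$ with $g^{-1} = h_i v$, hence $g^T = v^{-T} h_i^{-T}$, i.e., $h_i^T g^T = v^{-T}$. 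Therefore
\[
h_i^T \xi \;=\; h_i^T g^T c \;=\; v^{-T} c \;\in\; \overline{V}^{-T} C \;=\; K \;\subseteq\; Q,
\]
which gives $\xi\in h_i^{-T} Q$, as required.

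There is no real obstacle here; the argument is topological and uses only the openness and $H^T$-invariance of $\mathcal{O}$, the closedness of $H$ (to ensure $\overline{V}\subseteq H$), the representation $\mathcal{O}=H^T C$, and the $V$-density provided by well-spreadness. The mildly delicate point is separating the roles of the covering neighborhood $V$ and the base set $C$: one must apply the $V$-density to $g^{-1}$ rather than to $g$ in order to make the transpose factor $h_i^T$ cancel into a $v^{-T}$ coming from $V$, producing an element of the fixed compact set $K\subset\mathcal{O}$.
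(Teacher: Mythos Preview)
Your argument is essentially the paper's: take a relatively compact density neighborhood $V$ (the paper's $U$), set $Q$ to be (a slight enlargement of) $V^{-T}C$, and verify the covering using $\mathcal{O}=H^TC=H^{-T}C=\bigcup_i h_i^{-T}V^{-T}C$. The extra step of enlarging $K=\overline{V}^{-T}C$ to an open $Q$ is harmless but not needed, since the statement does not require $Q$ open and the paper simply takes $Q=U^{-T}C$ with $\overline{Q}=\overline{U}^{-T}C\subset\mathcal{O}$.

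One small slip: from $g^{-1}=h_i v$ you get $g=v^{-1}h_i^{-1}$ and hence $g^T=h_i^{-T}v^{-T}$, not $g^T=v^{-T}h_i^{-T}$ as written (the transpose reverses the order). With the correct order, $h_i^T g^T = v^{-T}$ follows immediately, so your conclusion $h_i^T\xi = v^{-T}c\in K\subseteq Q$ is right; only the intermediate formula needs correcting.
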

\begin{proof}
Let $\mathcal{O} = H^T C$ be the essential frequency support of $H$. 
Let $(h_i)_{i \in I}$ be a well-spread family and let $U \subseteq H$ be a relatively compact set
such that $H = \bigcup_{i \in I} h_i U$. Then
\[
\mathcal{O} = H^T C = H^{-T} C = \bigcup_{i \in I} h_i^{-T} U^{-T} C, 
\]
which shows that $\mathcal{Q} = (h_i^{-T} U^{-T} C)_{i \in I}$ is a covering of $\mathcal{O}$. 
Note that $Q := U^{-T} C $ has a compact closure 
$\overline{Q} = \overline{U}^{-T} C \subseteq \mathcal{O}$ since $C \subseteq \mathcal{O}$ is compact. 
\end{proof}

Motivated by the previous result, we make the following formal definition. 

\begin{definition} \label{def:induced_cover}
Let $(h_i)_{i \in I} \subseteq H$ be well-spread and let $Q \subseteq \RHat^d$ be  relatively compact with
 $\overline{Q} \subseteq \mathcal{O}$. A covering $\mathcal{Q} = (Q_i)_{i \in I}$ of $\mathcal{O}$ possesing the form 
$Q_i = h_i^{-T} Q$ is called a \emph{covering of $\mathcal{O}$ induced by $H$}. 
\end{definition}

Any induced cover is admissible in the sense of Definition \ref{def:AdmissibleCovering}:

\begin{proposition} \label{prop:induced_admissible}
Any cover $\mathcal{Q} = (Q_i)_{i \in I}$ of $\mathcal{O}$ induced by $H$ is an 
admissible cover.
\end{proposition}
\begin{proof}
Let  $\mathcal{Q} = (h_i^{-T} Q)_{i \in I}$ be an induced cover of $\mathcal{O}$, 
with $Q \subseteq \RHat^d$ having compact closure $\overline{Q} \subseteq \mathcal{O}$.
Then, by Lemma \ref{lem:neighborhoods_bounded}, there exists a constant $C > 0$
such that $|I_{h_i} | \leq C$ for all $i \in I$, where 
$ I_{h_i} := \{j \in I \; | \; h_j^{-T} \overline{Q} \cap h_i^{-T} \overline{Q} \neq \emptyset \}. $
But $i^* = I_{h_i}$, and thus $\mathcal{Q}$ is admissible. 
\end{proof}

\subsection{Weights, discretizations and pull-backs} 

\begin{definition}
Let $\mathcal{Q} = (Q_i)_{i \in I}$ be an admissible covering of an open set $\mathcal{O} \subset \RHat^d$. 
\begin{enumerate}[(i)]
\item A function $u : \mathcal{O} \to \mathbb{R}^+$ is called \emph{$\mathcal{Q}$-moderate}
 if there exists a $C_{\mathcal{Q}} > 0$
such that $u(x) / u(y) \leq C_{\mathcal{Q}}$ for all $x, y \in Q_i$ and $i \in I$. 
\item A sequence $u : I \to \mathbb{R}^+$ is called \emph{$\mathcal{Q}$-moderate} if
$ \sup_{i \in I} \sup_{\ell \in i^*} u_i / u_{\ell} < \infty.$
\item A sequence $u' : I \to \mathbb{R}^+$ is called a \emph{discretization} of 
$u : \mathcal{O} \to \mathbb{R}^+$ if, for every $i \in I$, there exists an $x_i \in Q_i$
such that $u'_i = u(x_i)$. 
\end{enumerate}
\end{definition}

\begin{remark} \label{rem:discrete_weights}
A discretization of a $\mathcal{Q}$-moderate function is $\mathcal{Q}$-moderate as a sequence and, 
moreover, any two discretizations are equivalent as weighting functions. 
\end{remark}

In the sequel, we will repeatedly transfer a weight $v : H \to \mathbb{R}^+$ 
into a function $u : \mathcal{O} \to \mathbb{R}^+$ by means of a pull-back 
through a cross-section. More precisely, if 
$\sigma : \mathcal{O} \to H$ is a cross-section, 
then $\sigma^* v = u$, with $\sigma^* v (\xi) = v(\sigma(\xi))$ for $\xi \in \mathcal{O}$. 
We formally define the following. 

\begin{definition}
Let $\mathcal{O} = H^T C$ be an essential fequency support of $H \leq \mathrm{GL}(d, \mathbb{R})$ 
and let $v : H \to \mathbb{R}^+$ be a weighting function. 
A function $u : \mathcal{O} \to \mathbb{R}^+$ is called a \emph{pull-back of $v$ from $H$ onto $\mathcal{O}$} 
if, for each $\xi \in \mathcal{O}$, there exist $h_{\xi} \in H$ and $\xi_0 \in C$ such that 
$\xi = h^{-T}_{\xi} \xi_0$ and $u(\xi) = v(h_{\xi})$. 
\end{definition}

By suitable assumptions on the weight $v$, a pull-back $u$ is $\mathcal{Q}$-moderate 
and any two pull-backs are equivalent:

\begin{lemma} \label{lem:pull-back}
Let $\mathcal{O} = H^T C$ be an essential frequency support of $H \leq \mathrm{GL}(d, \mathbb{R})$ 
and let $v : H \to \mathbb{R}^+$ be a submultiplicative weight. 
\begin{enumerate}[(i)]
\item If
$
u : \mathcal{O} \to \mathbb{R}^+ 
$ 
is a pull-back of $v$ from $H$ onto $\mathcal{O}$ and if $Q \subset \mathcal{O}$ is compact, 
then there exists a constant $C_0 = C_0(v, C, Q) > 0$ 
such that
\begin{align} \label{eq:Q-moderate}
C_0^{-1} v(h) \leq u(\xi) \leq C_0 v(h) 
\end{align}
for all $\xi \in \mathcal{O}$ and $h \in H$ satisfying $\xi \in h^{-T} Q$. 
\item Any pull-back $u : \mathcal{O} \to \mathbb{R}^+$ of $v$ is $\mathcal{Q}$-moderate for any induced cover $\mathcal{Q}$ of $\mathcal{O}$. 
\item Any two pull-backs $u_1, u_2 : \mathcal{O} \to \mathbb{R}^+$ are equivalent as weights. 
\end{enumerate}
\end{lemma}
\begin{proof}
(i) Let $L := ((C, Q))$, which is relatively compact in $H$ by Corollary \ref{cor:KKcp}. 
Fix  $h \in H$ and $\xi \in \mathcal{O}$ satisfying $\xi \in h^{-T} Q$. 
Choose $h_{\xi} \in H$ and $\xi_0 \in C$ such that $u(\xi) = v(h_{\xi})$ and $h^{-T}_{\xi} \xi_0 = \xi \in h^{-T} Q$. Then 
\[
\emptyset \neq h_{\xi}^{-T} \{\xi_0 \} \cap h^{-T} Q \subseteq h^{-T}_{\xi} C \cap h^{-T} Q,
\]
and thus $h \in h_{\xi} L$. Then, since $v : H \to \mathbb{R}^+$ 
is submultiplicative, there exists a constant $C_0 = C_0 (v, L) > 0$ such that
\[
C_0^{-1} v(h' k) \leq v(h_{\xi}) \leq C_0 v(h' k)
\]
for all $h' \in H$ and $k \in L$. Since $h = h_{\xi} k$ for some $k \in L$, this shows \eqref{eq:Q-moderate}. 

(ii) Suppose that $\mathcal{Q} = (h_i^{-T} Q)_{i \in I}$ is an induced cover and let $\xi, \omega \in Q_i = h_i^{-T} Q$  for some $i \in I$. Then \eqref{eq:Q-moderate} yields a constant $C_0 (v, C, Q) > 0$ such that
\[
u(\xi) \leq C_0 v(h_i) \leq C_0^2 u(\omega)
\]
for all $i \in I$. By symmetry, it follows that $u$ is $\mathcal{Q}$-moderate. 

(iii) Suppose that $u_1, u_2$ are two pull-backs. Let $\xi \in \mathcal{O} = H^T C$ 
be arbitrary. Choose $h \in H$ and $\xi_0 \in C$ such that $\xi = h^{-T} \xi_0 \in h^{-T} C$. 
By \eqref{eq:Q-moderate}, there exist constants $C_1 = C_1(u_1, C) > 0$ and $C_2 = C_2(u_2, C) > 0$ such that
\[
C_1^{-1} C_2^{-1} u_2 (\xi) \leq C_1^{-1} v(h) \leq u_1 (\xi) \leq C_1 v(h) \leq C_1 C_2 u_2 (\xi), 
\]
showing that $u_1, u_2$ are equivalent. 
\end{proof}

\subsection{Partitions of unity subordinate to an induced cover}
This subsection is devoted to the explicit construction of a BAPU subordinate to a cover of $\mathcal{O}$ 
induced by $H$.

\begin{definition}\label{def:BAPU}
  Let $\mathcal{Q} = (Q_i)_{i \in I}$ be an admissible cover of
  an open subset $\mathcal{O} \subset \RHat^d$.
  A family ${\Phi = (\varphi_i )_{i \in I}}$ is called a
  \emph{bounded admissible partition of unity} (BAPU) subordinate to $\mathcal{Q}$ if
  \begin{enumerate}[(i)]
    \item  \label{enu:BAPU_Smoothness}
           $\varphi_i \in C_c^{\infty} (\mathcal{O})$ for all $i \in I$;
    \item \label{enu:BAPU_Partition}
          $\sum_{i \in I} \varphi_i (\xi) = 1$ for all
          $\xi \in \mathcal{O}$;
    \item \label{enu:BAPU_Support}
          $\varphi_i (\xi) = 0$ for all $\xi \in \mathcal{O} \setminus Q_i$
          for all $i \in I$;
    \item \label{enu:BAPU_Boundedness}
          $C_{\Phi}
           := \sup_{i \in I} \| \mathcal{F}^{-1} \varphi_i \|_{L^1} < \infty$.
  \end{enumerate}
  A family ${\Phi = (\varphi_i )_{i \in I}}$ satisfying (i)-(iv) is also called a \emph{$\mathcal{Q}$-BAPU}.
\end{definition}

The construction of the BAPU in Theorem \ref{thm:Q-BAPU} below is 
similar in spirit to the construction in \cite[Section 6]{fuehr2015wavelet} 
and relies on the following auxiliary lemma. 

\begin{lemma}[\cite{fuehr2015wavelet}] \label{lem:construction_BAPU}
For $\psi \in \mathcal{S}_{\mathcal{O}} (\mathbb{R}^d)$ and a measurable, relatively compact set $U \subseteq H$, 
the function 
\begin{align} \label{eq:phiU}
\varphi_U : \RHat^d \to [0,\infty), \quad \xi \mapsto \int_U \big| \hat{\psi} (h^T \xi ) \big|^2 \; d\mu_H (h)
\end{align}
is well-defined and $\varphi_U \in \mathcal{D} (\mathcal{O})$, with 
 $\supp \varphi_U \subseteq \overline{U}^{-T} \supp \hat{\psi} \subseteq \mathcal{O}$.

The inverse Fourier transform $\mathcal{F}^{-1} \varphi_U$ of $\varphi_U$ is given by
\[
\big( \mathcal{F}^{-1} \varphi_U \big) (x) = \int_U \frac{\mathcal{F}^{-1} \big(|\hat{\psi} |^2 \big) (h^{-1} x)}{|\det (h)|} \; d\mu_H (h)
\]
for all $x \in \mathbb{R}^d$, with $\| \mathcal{F}^{-1} \varphi_U \|_{L^1} \leq \mu_H (H) \|\mathcal{F}^{-1} (|\hat{\psi}|^2) \|_{L^1}$.
\end{lemma}

\begin{theorem} \label{thm:Q-BAPU}
Let $\mathcal{O} = H^T C$ be the essential frequency support of $H \leq \mathrm{GL}(d, \mathbb{R})$. 
Let $(h_i)_{i \in I}$ be well-spread in $H$ and let $U \subseteq H$ be a measurable, relatively compact set such that $H = \bigcup_{i \in I} h_i U$.
\begin{enumerate}[(i)]
\item Given an enumeration $(i_n)_{n \in \mathbb{N}}$ of $I$, define 
$
U_{i_n} := h_{i_n} U \setminus \bigcup_{m=1}^{n-1} h_{i_m} U
$
for $n \in \mathbb{N}$.
Then $(U_i)_{i \in I}$ is a measurable partition of $H$ satisfying $U_i \subseteq h_i U$ for all $i \in I$. 
\item For an admissible analyzing vector $\psi \in \mathcal{S}_{\mathcal{O}} (\mathbb{R}^d)$, 
with $\widehat{\psi}^{-1} (\mathbb{C} \setminus \{0\} ) \supset C$, 
define the open set $Q := U^{-T} \big( \hat{\psi}^{-1} (\mathbb{C} \setminus \{0\}) \big)$. 
Then $Q \subset \mathcal{O}$ is relatively compact with closure $\overline{Q} \subset \mathcal{O}$
and the collection $\mathcal{Q} = (h_i^{-T} Q)_{i \in I}$ forms a covering of $\mathcal{O}$ induced by $H$. 
Moreover, the family $(\varphi_{U_i})_{i \in I}$ of functions
\[ 
\varphi_{U_i} := \int_{U_i} \big| \hat{\psi} (h^T \xi) \big|^2 \; d\mu_H (h) \in \mathcal{D} (\mathcal{O})
\]
forms a BAPU subordinate to $\mathcal{Q}$. 
\end{enumerate}
\end{theorem}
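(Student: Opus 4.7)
The plan is to verify the two parts in order, with the bulk of the work being the four defining properties of a BAPU in part (ii).

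Part (i) is a direct set-theoretic verification. Since $H = \bigcup_{n} h_{i_n} U$ and $U$ is measurable, each $U_{i_n}$ is measurable with $U_{i_n} \subseteq h_{i_n} U$ by construction. For $m < n$ the set $U_{i_n}$ is obtained by removing $h_{i_m} U$, so $(U_{i_n})_{n \in \mathbb{N}}$ is pairwise disjoint, and the union exhausts $H$.

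For part (ii), first I would check the geometric claims about $Q$. Since $\widehat\psi \in \mathcal{D}(\mathcal{O})$, the set $\widehat\psi^{-1}(\mathbb{C} \setminus \{0\})$ is open and relatively compact in $\mathcal{O}$, with closure contained in $\supp \widehat\psi \subset \mathcal{O}$. Thus $Q = U^{-T}(\widehat\psi^{-1}(\mathbb{C} \setminus \{0\}))$ is open, with $\overline{Q} \subseteq \overline{U}^{-T} \supp \widehat\psi$ compact (continuous image of a compact set) and contained in $\mathcal{O}$ by $H^T$-invariance. The covering property follows from the hypothesis $\widehat\psi^{-1}(\mathbb{C} \setminus \{0\}) \supset C$: since $H = \bigcup_i h_i U$, one gets $\mathcal{O} = H^T C \subseteq \bigcup_i h_i^{-T} U^{-T} C \subseteq \bigcup_i h_i^{-T} Q$, so $\mathcal{Q}$ is a cover of $\mathcal{O}$ induced by $H$ in the sense of Definition \ref{def:induced_cover}.

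For the four BAPU properties I would check them in the order (i), (iii), (iv), (ii). Smoothness of $\varphi_{U_i}$ with compact support in $\mathcal{O}$ is immediate from Lemma \ref{lem:construction_BAPU} applied to $U = U_i$. For the support condition, if $\xi \in \mathcal{O} \setminus h_i^{-T} Q$, then for every $g \in U$ the point $\xi$ lies outside $h_i^{-T} g^{-T} \widehat\psi^{-1}(\mathbb{C}\setminus\{0\})$; writing $h = h_i g$ gives $h^T = g^T h_i^T$ and hence $h^{-T} = h_i^{-T} g^{-T}$, so $h^T \xi \notin \widehat\psi^{-1}(\mathbb{C}\setminus\{0\})$, forcing $\widehat\psi(h^T \xi) = 0$. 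Since $U_i \subseteq h_i U$, this kills the integrand defining $\varphi_{U_i}(\xi)$. For the uniform $L^1$-bound, Lemma \ref{lem:construction_BAPU} together with the triangle inequality and the left-invariance of $\mu_H$ yields $\|\mathcal{F}^{-1}\varphi_{U_i}\|_{L^1} \leq \mu_H(U_i) \|\mathcal{F}^{-1}(|\widehat\psi|^2)\|_{L^1} \leq \mu_H(U) \|\mathcal{F}^{-1}(|\widehat\psi|^2)\|_{L^1}$, which is finite and independent of $i$.

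The point I expect to require the most care is the partition-of-unity identity (ii). By the admissibility of $\psi$ and the measurable partition $(U_i)_{i \in I}$ of $H$ from part (i), I get
\[
\sum_{i \in I} \varphi_{U_i}(\xi) \;=\; \int_H \big|\widehat\psi(h^T \xi)\big|^2 \, d\mu_H(h) \;=\; 1
\]
for a.e.\ $\xi \in \RHat^d$. To upgrade this to an identity on all of $\mathcal{O}$, I would invoke the admissibility of $\mathcal{Q}$ (already established, or verified on the spot via Lemma \ref{lem:neighborhoods_bounded}), which combined with the support property (iii) shows that at most $N_{\mathcal{Q}}$ summands $\varphi_{U_i}$ are nonzero on any neighborhood of a given $\xi \in \mathcal{O}$. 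Hence $\sum_i \varphi_{U_i}$ is locally a finite sum of continuous functions and therefore continuous on $\mathcal{O}$; a continuous function on the open set $\mathcal{O}$ which equals $1$ almost everywhere must equal $1$ at every point, completing the proof.
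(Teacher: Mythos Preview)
Your proposal is correct and follows essentially the same route as the paper: verify the geometric claims about $Q$ via $\overline{Q} \subseteq \overline{U}^{-T}\supp\widehat\psi \subset \mathcal{O}$ and the covering via $\bigcup_i (h_i U)^{-T} W \supset H^T C$, then check the BAPU properties using Lemma~\ref{lem:construction_BAPU} for smoothness and the uniform $L^1$-bound, and the measurable partition $(U_i)_{i\in I}$ together with the Calder\'on condition for $\sum_i \varphi_{U_i} \equiv 1$. Your explicit continuity argument upgrading the Calder\'on identity from ``a.e.'' to ``everywhere on $\mathcal{O}$'' is a point the paper glosses over, and your direct verification of the support condition (iii) is slightly more careful than the paper's appeal to Lemma~\ref{lem:construction_BAPU} (which only yields $\supp\varphi_{U_i}\subseteq \overline{U_i}^{-T}\supp\widehat\psi$, a set that need not lie inside $h_i^{-T}Q$), but these are elaborations rather than a different strategy.
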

\begin{proof}
Assertion (i) is a standard fact from measure theory. 

(ii) Let $\psi \in \mathcal{S}_{\mathcal{O}} (\mathbb{R}^d)$ be admissible, 
with $W := \widehat{\psi}^{-1} (\mathbb{C} \setminus \{0\} ) \supset C$. 
Define the open set
\[
Q = \bigcup_{h \in U} h^{-T} W \subseteq \mathcal{O}. 
\]
Then $Q$ is relatively compact since
$\overline{Q} \subseteq \overline{U}^{-T} \supp \hat{\psi} \subseteq \mathcal{O}$ is compact. 
For the covering property $\mathcal{O} = \bigcup_{i \in I} h_i^{-T} Q$, 
simply note that
\[
\bigcup_{i \in I} h_i^{-T} Q =
 \bigcup_{i \in I} (h_i U)^{-T} W \supset H^T C = \mathcal{O},
\]
and $\mathcal{O} \supset \bigcup_{i \in I} h_i^{-T} Q$ since $\mathcal{O}$ is $H^T$-invariant. 

It remains to show that $(\varphi_{U_i} )_{i \in I}$ forms a BAPU subordinate to $\mathcal{Q}$. 
For this, note that an application of Lemma \ref{lem:construction_BAPU} 
yields that $\varphi_{U_i} \in \mathcal{D} (\mathcal{O})$ as defined in \eqref{eq:phiU}
with $\varphi_{U_i} \equiv 0$ on $\mathcal{O} \setminus (h_i^{-T} Q)$, and
\[
\| \mathcal{F}^{-1} \varphi_{U_i} \|_{L^1} \leq \mu_H (U_i) \|\mathcal{F}^{-1} (|\hat{\psi}|^2) \|_{L^1} 
\leq \mu_H (\overline{U})  \|\mathcal{F}^{-1} (|\hat{\psi}|^2) \|_{L^1}, 
\]
yielding that $\sup_{i \in I} \| \mathcal{F}^{-1} \varphi_{U_i} \|_{L^1} < \infty$. 
Lastly, using that $\psi \in \mathcal{S}_{\mathcal{O}} (\mathbb{R}^d)$ is admissible 
and $(U_{i_n})_{n \in \mathbb{N}}$ forms a partitition of $H$, 
it follows that
\[
\sum_{i \in I} \varphi_{U_i} (\xi) = \sum_{n = 1}^{\infty} \varphi_{U_{i_n}} (\xi) = 
\sum_{n = 1}^{\infty} \int_{U_{i_n}} \big| \hat{\psi} (h^T \xi) \big|^2 \; d\mu_H (h) =
\int_{H} \big| \hat{\psi} (h^T \xi) \big|^2 \; d\mu_H (h) = 1,
\]
as desired. 
\end{proof}

\subsection{Besov-type decomposition spaces}

Following \cite{triebel1977fourier, stroeckert1979decomposition, voigtlaender_embeddings}, 
we use so-called \emph{Fourier distributions} as a reservoir for the decomposition spaces.

\begin{definition}\label{def:Reservoir}
  Let $\mathcal{O} \subset \RHat^d$ be open and set
  $
    Z (\mathcal{O})
    := \mathcal{F} (C_c^{\infty}  (\mathcal{O})).
  $
  The space $Z (\mathcal{O})$ will be equipped with the unique topology making the
  Fourier transform $\mathcal{F} : C_c^{\infty} (\mathcal{O}) \to Z(\mathcal{O})$
  into a homeomorphism.
  The topological dual space $(Z (\mathcal{O}))'$ of
  $Z (\mathcal{O})$ is denoted by $Z' (\mathcal{O})$.
  The bilinear duality pairing between $Z'(\mathcal{O})$ and $Z(\mathcal{O})$
  is denoted by
  $( \cdot, \cdot ) = Z'(\mathcal{O}) \times Z(\mathcal{O}) \to \CC$. 
\end{definition}

  The \emph{Fourier transform} of an
  $f \in Z'(\mathcal{O})$ is defined by duality as
  \[
    \mathcal{F} : Z'(\mathcal{O}) \to \mathcal{D}'(\mathcal{O}),
    \quad
               f \mapsto  \widehat{f} := f \circ \mathcal{F}.
  \]

\begin{definition}\label{def:DecompositionSpace}
  Let $p, q \in [1, \infty]$. Let $\mathcal{Q}= (Q_i)_{i \in I}$ be an admissible
  covering of an open set $\mathcal{O} \subset \RHat^d$ with
  a subordinate BAPU $(\varphi_i)_{i \in I}$. Let $u = (u_i)_{i \in I}$ be
  $\mathcal{Q}$-moderate.
  For $f \in Z' (\mathcal{O})$, set
  \begin{align} \label{eq:decomp_norm}
    \| f \|_{\mathcal{D} (\mathcal{Q}, L^p, \ell^q_u) }
    := \Big\|
          (
           \| \mathcal{F}^{-1} (\varphi_i \cdot \widehat{f} \, ) \|_{L^p}
          )_{i \in I}
       \Big\|_{\ell^q_u}
    \in [0,\infty] \, ,
  \end{align}
  and define the associated \emph{Besov-type decomposition space}
  $\mathcal{D}(\mathcal{Q},L^p,\ell^q_u)$ as
  \[
    \mathcal{D} (\mathcal{Q}, L^p, \ell_u^q)
    := \bigg\{
          f \in Z' (\mathcal{O})
          \; : \;
          \|f \|_{\mathcal{D} (\mathcal{Q}, L^p, \ell^q_u) } < \infty
       \bigg\}.
  \]
\end{definition}

\begin{remark}
The decomposition spaces $\mathcal{D} (\mathcal{Q}, L^p, \ell^q_u)$ 
form Banach spaces for all $p,q \in [1,\infty]$ and its definition is 
independent of the chosen BAPU, with equivalent norms for different choices. 
For proofs of these and other properties, the reader is referred to  \cite{voigtlaender_embeddings}. 
\end{remark}

The following result, which is specific for the setting considered here, 
is \cite[Corollary 26]{fuehr2015wavelet}. 

\begin{lemma} \label{lem:pull-back_equivnorm}
Suppose that $\mathcal{Q} = (h_i^{-T} Q)_{i \in I}$ and $\mathcal{Q}' = (h_i^{-T} Q')_{i \in I}$ are
coverings of $
\mathcal{O}$ induced by $H$. Then 
\[
\mathcal{D} (\mathcal{Q}, L^p, \ell^q_{u}) = \mathcal{D} (\mathcal{Q}', L^p, \ell^q_{u})
\]
with equivalent norms for every pull-back $u : \mathcal{O} \to \mathbb{R}^+$ 
obtained from a $v_0$-moderate weight $v : H \to \mathbb{R}^+$ 
relative to a locally bounded, submultiplicative weight $v_0 : H \to \mathbb{R}^+$. 
\end{lemma}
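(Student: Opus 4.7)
The plan is to reduce to the standard equivalence principle for decomposition spaces: two admissible coverings of $\mathcal{O}$ yield the same space with equivalent norms as soon as each element of one covering meets only a uniformly bounded number of elements of the other, and the corresponding weight sequences and BAPUs are compatible. The main obstacle will be the bounded cross-intersection property, which in the irreducibly admissible setting of \cite{fuehr2015wavelet} is handled using the single, singly-generated open orbit, whereas here the compact generator $C$ of $\mathcal{O}$ need not be a point --- precisely the situation Corollary \ref{cor:KKcp} was designed to accommodate.

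First I would establish the bounded cross-intersection property. Setting $K := \overline{Q} \cup \overline{Q'}$, a compact subset of $\mathcal{O}$, and $J_i := \{ j \in I : h_j^{-T} Q' \cap h_i^{-T} Q \neq \emptyset \}$, one has $J_i \subseteq \{ j : h_j^{-T} K \cap h_i^{-T} K \neq \emptyset \}$. Since $(h_i)_{i \in I}$ is well-spread and hence $V$-separated for some relatively compact unit neighbourhood $V \subseteq H$, Lemma \ref{lem:neighborhoods_bounded} then yields $\sup_{i \in I} |J_i| < \infty$; the symmetric statement obtained by swapping $Q$ and $Q'$ is identical.

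Second I would verify weight compatibility. Since both $\mathcal{Q}$ and $\mathcal{Q}'$ share the well-spread family $(h_i)_{i \in I}$, for any $\xi_i \in h_i^{-T} Q$ and $\xi'_i \in h_i^{-T} Q'$ the elements $h_{\xi_i}$ and $h_{\xi'_i}$ from the cross-section defining the transplant both lie in right-translates by $h_i^{-1}$ of fixed relatively compact subsets of $H$ (namely $((C,Q))$ and $((C,Q'))$, relatively compact by Corollary \ref{cor:KKcp}). The $v_0$-moderateness of $v$ then forces $u(\xi_i) \asymp u(\xi'_i)$ with constants independent of $i$, so the discretizations of $u$ over the two coverings are equivalent as sequences on $I$.

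Third I would run the standard BAPU cross-estimate. Choose BAPUs $(\varphi_i)_{i \in I}$ and $(\varphi'_j)_{j \in I}$ subordinate to $\mathcal{Q}$ and $\mathcal{Q}'$, as produced by Theorem \ref{thm:Q-BAPU}. For $f \in Z'(\mathcal{O})$ the partition-of-unity identity $\varphi_i = \varphi_i \sum_{j \in J_i} \varphi'_j$ combined with Young's convolution inequality gives
\[
\bigl\| \mathcal{F}^{-1}(\varphi_i \widehat{f}) \bigr\|_{L^p} \;\leq\; \Bigl( \sup_{k \in I} \bigl\| \mathcal{F}^{-1} \varphi_k \bigr\|_{L^1} \Bigr) \sum_{j \in J_i} \bigl\| \mathcal{F}^{-1}(\varphi'_j \widehat{f}) \bigr\|_{L^p}.
\]
Taking weighted $\ell^q_u$-norms and invoking the uniform bound on $|J_i|$ together with the weight equivalence from Step 2 yields one of the two required norm inequalities; the reverse follows by symmetry.
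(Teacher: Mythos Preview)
Your argument is correct and is essentially the standard proof underlying the result the paper invokes; the paper itself does not give a proof but simply records the lemma as \cite[Corollary~26]{fuehr2015wavelet}. What you have written is a self-contained unpacking of that citation, adapted to the integrably admissible setting via Lemma~\ref{lem:neighborhoods_bounded} and Corollary~\ref{cor:KKcp}, which is exactly the adaptation needed when the orbit generator $C$ is a compact set rather than a point.

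One small imprecision worth flagging: Theorem~\ref{thm:Q-BAPU} produces a BAPU only for the \emph{particular} induced covering with $Q = U^{-T}\bigl(\widehat{\psi}^{\,-1}(\mathbb{C}\setminus\{0\})\bigr)$, not for an arbitrary relatively compact $Q$ with $\overline{Q}\subset\mathcal{O}$. So your sentence ``as produced by Theorem~\ref{thm:Q-BAPU}'' is not literally justified for general $\mathcal{Q},\mathcal{Q}'$. This is harmless, however: either observe that the decomposition spaces in the statement are tacitly assumed to be defined (hence BAPUs exist and your Step~3 runs with those), or---more economically---compare each of $\mathcal{Q}$ and $\mathcal{Q}'$ to the single specific covering furnished by Theorem~\ref{thm:Q-BAPU}. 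Your Steps~1 and~2 apply verbatim to that comparison, and then only one BAPU is needed. Either route closes the gap without altering the substance of your proof.
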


Lastly, we mention some density and approximation results that will be 
used in the next section. 
The following notion of a dominated distribution was introduced in \cite{romero_invertibility}.

\begin{definition} 
Let $I$ be a countable index set, and let $u : I \to \RR^+$ be a weight. 
Define
\[
\ell^q_u (I; L^p) := \big\{ F : I \to L^p (\RR^d) \; : \; \big\| (\| F_i \|_{L^p} )_{i \in I} \big\|_{\ell^q_u} < \infty \big\}
\] 
and equip it with the norm $\| F \|_{\ell^q_u (I; L^p)} =  \big\| (\| F_i \|_{L^p} )_{i \in I} \big\|_{\ell^q_u}$. 
Let $\mathcal{Q} = (Q_i)_{i \in I}$ be an admissible covering 
of an open set $\mathcal{O} \subset \RR^d$ with $\mathcal{Q}$-BAPU $(\varphi_i)_{i \in I}$ 
and let $F \in \ell^q_u (I; L^p)$. 

A distribution $f \in Z'(\mathcal{O})$ is called \emph{$F$-dominated} 
if 
\[
|\mathcal{F}^{-1} ( \varphi_i \cdot \widehat{f})| \leq F_i
\]
for all $i \in I$. 
\end{definition}

The following lemma is \cite[Proposition 3.13]{romero_invertibility}. 

 \begin{lemma}[\cite{romero_invertibility}] \label{lem:Density_decomposition}
  Let $\mathcal{Q} = (Q_i)_{i \in I}$ be an admissible covering of an open set
  $\mathcal{O} \subset \RHat^d$ with $\mathcal{Q}$-BAPU $\Phi = (\varphi_i)_{i \in I}$
   and let $u = (u_i)_{i \in I}$ be a $\mathcal{Q}$-moderate weight.
  Then
  \begin{enumerate}[(i)]
    \item The inclusion
               $\mathcal{S}_{\mathcal{O}} (\RR^d) \subset \mathcal{D}(\mathcal{Q}, L^p, \ell_u^q)$
               holds for all $p,q \in [1,\infty]$.
    \item If $p,q \in [1,\infty)$, then $\mathcal{S}_{\mathcal{O}} (\RR^d)$ is norm
                dense in $\mathcal{D}(\mathcal{Q},L^p,\ell_u^q)$.
    \item If $p,q \in [1,\infty]$ and $f \in \mathcal{D}(\mathcal{Q}, L^p, \ell_u^q)$,
    		then there is an $F \in \ell^q_u (I; L^p)$ 
    		and a constant $C_{\Phi, \mathcal{Q}} > 0$ such that
    		\[
    		\| F \|_{\ell^q_u (I; L^p)} 
    		\leq C_{\Phi, \mathcal{Q}} \| f \|_{\mathcal{D}(\mathcal{Q}, L^p, \ell^q_u)} 
    		\]
    		Moreover, there exists a sequence $(f_n)_{n \in \mathbb{N}}$ of
                $F$-dominated functions $f_n \in \mathcal{S}_{\mathcal{O}} (\RR^d)$ such that
                 $f_n \to f$, with convergence in $Z' (\mathcal{O})$.
  \end{enumerate}
\end{lemma}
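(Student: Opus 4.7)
The plan is to handle (i) by a direct support argument, establish (iii) by invoking the density machinery for Besov-type decomposition spaces in \cite{romero_invertibility}, and deduce (ii) from (iii) by dominated convergence on the defining $\ell^q_u(I;L^p)$ norm.

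For (i), let $\psi \in \mathcal{S}_{\mathcal{O}}(\RR^d)$ and set $K := \supp \widehat{\psi} \Subset \mathcal{O}$. Writing the induced cover as $\mathcal{Q} = (h_i^{-T} Q)_{i \in I}$ with $\overline{Q} \subset \mathcal{O}$ compact, I would first show that $I_K := \{i \in I : h_i^{-T} Q \cap K \neq \emptyset\}$ is finite: the condition $h_i^{-T} Q \cap K \neq \emptyset$ forces $h_i$ to lie in a translate of $((\overline{Q}, K))$, which is relatively compact in $H$ by Corollary \ref{cor:KKcp}; the $V$-separatedness of $(h_i)_{i \in I}$ combined with the volume argument of Lemma \ref{lem:neighborhoods_bounded} then bounds $|I_K|$. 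For each $i \in I_K$, the product $\varphi_i \widehat{\psi}$ is a Schwartz function, so $\mathcal{F}^{-1}(\varphi_i \widehat{\psi}) \in L^p$, and the finiteness of $I_K$ immediately yields $\|\psi\|_{\mathcal{D}(\mathcal{Q}, L^p, \ell^q_u)} < \infty$.

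For (iii), given $f \in \mathcal{D}(\mathcal{Q}, L^p, \ell^q_u)$, the natural dominating envelope is $F_i := \bigl|\mathcal{F}^{-1}(\varphi_i \widehat{f})\bigr|$, possibly inflated by a bounded factor depending on the overlap constant $N_{\mathcal{Q}}$ and on $C_\Phi$ so that the envelope simultaneously dominates the approximants. To produce the sequence $(f_n) \subset \mathcal{S}_{\mathcal{O}}(\RR^d)$, one would combine two standard operations: a frequency truncation via an enumeration $(i_k)_{k \in \mathbb{N}}$ of $I$, replacing $\widehat{f}$ by $\bigl(\sum_{k \le n} \varphi_{i_k}\bigr) \widehat{f}$, and a mollification on the Fourier side designed to smooth the truncation and produce an element of $C_c^\infty(\mathcal{O})$. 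The domination $\bigl|\mathcal{F}^{-1}(\varphi_i \widehat{f_n})\bigr| \lesssim \sum_{j \in i^*} F_j$ then emerges from the overlap structure of the BAPU together with the $L^1$-bounds on $\mathcal{F}^{-1} \varphi_i$, and the required $\ell^q_u(I;L^p)$-bound follows using that a $\mathcal{Q}$-moderate weight collapses sums over $i^*$ to a bounded multiple of the original norm. Since this mollification-plus-truncation construction is precisely the content of the density result in \cite{romero_invertibility}, I would appeal to it directly rather than reproving it, yielding the dominating $F$ and the approximants $f_n \in \mathcal{S}_{\mathcal{O}}(\RR^d)$ with $f_n \to f$ in $Z'(\mathcal{O})$.

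Finally, for (ii), I would verify that $(f_n)$ converges in norm when $p,q < \infty$. For each fixed $i$, the convergence $f_n \to f$ in $Z'(\mathcal{O})$, tested against the smooth compactly supported function $\varphi_i \cdot \eta$ for $\eta \in C_c^\infty(\RR^d)$, yields $\mathcal{F}^{-1}(\varphi_i \widehat{f_n}) \to \mathcal{F}^{-1}(\varphi_i \widehat{f})$ weakly, which together with the uniform envelope $|\mathcal{F}^{-1}(\varphi_i \widehat{f - f_n})| \le 2 F_i$ and dominated convergence in $L^p$ (valid since $p < \infty$) gives $\|\mathcal{F}^{-1}(\varphi_i \widehat{f - f_n})\|_{L^p} \to 0$. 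A second application of dominated convergence in $\ell^q_u$, justified by the envelope $(\|F_i\|_{L^p})_{i \in I} \in \ell^q_u$ and $q < \infty$, then gives $\|f - f_n\|_{\mathcal{D}(\mathcal{Q}, L^p, \ell^q_u)} \to 0$. The main obstacle is constructing the $F$-dominated sequence in (iii): a naive partial-sum approximation produces functions in an $L^p$-type space but generally not in $\mathcal{S}_{\mathcal{O}}(\RR^d)$, and controlling the interaction of a frequency mollifier with the BAPU while preserving the envelope bound is the delicate point, which is exactly what the cited density theorem from \cite{romero_invertibility} is designed to handle.
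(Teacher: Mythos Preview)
The paper states this lemma without proof, treating parts (ii) and (iii) as consequences of the density result in \cite{romero_invertibility} (as flagged in the introduction). Your reconstruction is therefore the intended one, and for (iii) your appeal to that reference is exactly what the paper has in mind.

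Two points are worth flagging. First, your argument for (i) tacitly assumes the covering is an \emph{induced} covering $Q_i = h_i^{-T} Q$, invoking Corollary~\ref{cor:KKcp} and Lemma~\ref{lem:neighborhoods_bounded} to get finiteness of $I_K$. The lemma, however, is stated for a general admissible covering admitting a BAPU, and Definition~\ref{def:AdmissibleCovering} alone does not guarantee that a compact set meets only finitely many $Q_i$. In the paper this is harmless, since the lemma is only ever applied to induced coverings (Theorem~\ref{thm:identification}); but strictly speaking your proof of (i) covers a narrower statement than the one written. A general proof would either add a local-finiteness hypothesis or invoke the structured-covering framework of \cite{voigtlaender_embeddings}, where this finiteness is built in.

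Second, your deduction of (ii) from (iii) has a small gap: weak convergence $f_n \to f$ in $Z'(\mathcal{O})$ does not by itself give pointwise a.e.\ convergence of $\mathcal{F}^{-1}(\varphi_i(\widehat{f}-\widehat{f_n}))$, which is what the dominated convergence theorem in $L^p$ needs. The fix is straightforward: since $\varphi_i \widehat{f_n} \to \varphi_i \widehat{f}$ in $\mathcal{D}'(\mathcal{O})$ with all supports contained in the fixed compact set $\supp \varphi_i$, the inverse Fourier transforms are entire functions and the distributional convergence upgrades to locally uniform (hence pointwise) convergence. With that observation inserted, your two-layer dominated convergence argument goes through.
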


For the proof of the following Fatou-like property, cf. \cite[Lemma 36]{fuehr2015wavelet}.

\begin{lemma}[{\cite{fuehr2015wavelet}}] \label{lem:fatou}
Let $\mathcal{D}(\mathcal{Q}, L^p, \ell^q_u)$ be a decomposition space. Suppose $(f_n)_{n \in \mathbb{N}}$ is a sequence in $\mathcal{D}(\mathcal{Q}, L^p, \ell^q_u)$ such that $\liminf_{n \to \infty} \| f_n \|_{\mathcal{D} (\mathcal{Q}, L^p, \ell^q_u)} < \infty$ and $f_n \to f \in Z' (\mathcal{O})$, with convergence in $Z' (\mathcal{O})$. 
Then $f \in \mathcal{D}(\mathcal{Q}, L^p, \ell^q_u)$ and $\| f \|_{\mathcal{D}(\mathcal{Q}, L^p, \ell^q_u)} \leq \liminf_{n \to \infty} \|f_n\|_{\mathcal{D}(\mathcal{Q}, L^p, \ell^q_u)}$. 
\end{lemma}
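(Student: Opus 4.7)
The plan is to establish lower semi-continuity of the decomposition space norm under $Z'(\mathcal{O})$-convergence by reducing, one factor at a time, to two classical Fatou inequalities: first in the inner $L^p$-norms for each block $\mathcal{F}^{-1}(\varphi_i \cdot \widehat{f_n})$, and then in the outer $\ell^q_u$-norm over $i \in I$. Once this is done, the conclusion $f \in \mathcal{D}(\mathcal{Q}, L^p, \ell_u^q)$ drops out automatically, because finiteness of the $\mathcal{D}$-norm is what membership means.

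First, I would pass to a subsequence $(f_{n_k})_{k \in \mathbb{N}}$ realising the liminf, so that $L := \lim_k \| f_{n_k} \|_{\mathcal{D}(\mathcal{Q}, L^p, \ell^q_u)}$ exists and equals $\liminf_n \| f_n \|_{\mathcal{D}(\mathcal{Q}, L^p, \ell^q_u)} < \infty$. Then I fix $i \in I$ and analyse the sequence $g_{i,k} := \mathcal{F}^{-1}(\varphi_i \cdot \widehat{f_{n_k}})$ versus $g_i := \mathcal{F}^{-1}(\varphi_i \cdot \widehat{f})$. The $Z'(\mathcal{O})$-convergence $f_{n_k} \to f$ translates, via the definition of $Z(\mathcal{O}) = \mathcal{F}(C_c^\infty(\mathcal{O}))$ and duality, into $\widehat{f_{n_k}} \to \widehat{f}$ in $\mathcal{D}'(\mathcal{O})$, and hence into $\varphi_i \widehat{f_{n_k}} \to \varphi_i \widehat{f}$ in $\mathcal{D}'(\mathcal{O})$. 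Since $\varphi_i \in C_c^\infty(\mathcal{O})$, all these distributions have supports contained in the fixed compact set $\supp \varphi_i$, so they are compactly supported distributions in $\mathcal{E}'(\mathbb{R}^d)$.

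The crucial step is the passage from distributional convergence of the bandlimited factors to pointwise convergence of the (smooth) inverse Fourier transforms. Because $\varphi_i \widehat{f_{n_k}}$ is compactly supported, Paley--Wiener lets me evaluate $g_{i,k}$ classically by pairing the distribution against the smooth character $\xi \mapsto e^{2\pi i \langle x, \xi \rangle}$ (which, after multiplication by a fixed cut-off supported in a neighbourhood of $\supp \varphi_i$, lies in $C_c^\infty(\mathcal{O})$). This gives $g_{i,k}(x) \to g_i(x)$ pointwise for every $x \in \mathbb{R}^d$. Standard Fatou in $L^p$ then yields
\[
 \| g_i \|_{L^p} \leq \liminf_{k \to \infty} \| g_{i,k} \|_{L^p}
\]
for $p \in [1, \infty)$; the case $p = \infty$ follows from the pointwise bound $|g_i(x)| = \lim_k |g_{i,k}(x)| \leq \liminf_k \| g_{i,k} \|_{L^\infty}$.

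Finally, I would apply Fatou's lemma a second time in the outer sequence space. Setting $a_i^k := \| g_{i,k} \|_{L^p}$ and $a_i := \| g_i \|_{L^p}$, the inequality $a_i \leq \liminf_k a_i^k$ for every $i \in I$ combined with the monotone convergence/Fatou argument for $\ell^q_u$ (with the obvious modification for $q = \infty$) gives
\[
 \| f \|_{\mathcal{D}(\mathcal{Q}, L^p, \ell^q_u)} = \big\| (a_i)_{i \in I} \big\|_{\ell^q_u}
 \leq \liminf_{k \to \infty} \big\| (a_i^k)_{i \in I} \big\|_{\ell^q_u} = L,
\]
which is exactly the claim. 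The main obstacle I anticipate is the bandlimited Paley--Wiener step that upgrades $\mathcal{D}'(\mathcal{O})$-convergence of $\varphi_i \widehat{f_{n_k}}$ to pointwise convergence of $g_{i,k}$; once this is in hand, the two Fatou applications are routine, and no extraction of further subsequences is needed.
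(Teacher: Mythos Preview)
Your argument is correct and is the standard two-layer Fatou argument for decomposition spaces. The paper does not actually supply its own proof of this lemma; it merely cites \cite[Lemma~36]{fuehr2015wavelet}, whose proof proceeds along exactly the lines you sketch: upgrade $Z'(\mathcal{O})$-convergence to pointwise convergence of each bandlimited piece $\mathcal{F}^{-1}(\varphi_i\widehat{f_n})$ via the compact Fourier support (your cut-off/Paley--Wiener step), apply Fatou in $L^p$ blockwise, and then apply Fatou in $\ell^q_u$.
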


\section{Identification of coorbit and decomposition spaces} \label{sec:identification}
This section is devoted to identifying coorbit spaces associated to integrably admissible dilation groups
with suitable decomposition spaces. To obtain this, it will first be shown that the identity map 
$I : \mathcal{S}_{\mathcal{O}} (\RR^d) \to \mathcal{S}_{\mathcal{O}} (\RR^d)$ is bi-continuous
from the coorbit space into the decomposition space. The general result will then be 
obtained by a suitable density argument. 

Throughout the section, we will only consider admissible weighting functions $v : G \to \RR^+$ of the form
 $(x,h) \mapsto v_0(h)$ for some weight $v_0 : H \to \RR^+$. Clearly, any such weight is admissible
 with $s = 0$ and we will identify $v_0$ with its trivial extension and simply write $v = v_0$. 
 Given any such weight $v$, 
 denote 
 \[ \tilde{v} : H \to \RR^+, \; h \mapsto |\det (h^{-1})|^{\frac{1}{2} - \frac{1}{q}} v(h^{-1}) \]
 and let $u : \mathcal{O} \to \RR^+$ be a pull-back of $\tilde{v}$
 with discretization $u : I \to \RR^+$ given by 
 \[u_i = u(x_i) = u(h^{-T}_i \xi_0) = \tilde{v} (h_i^{-1}) =  |\det (h_i)|^{\frac{1}{2} - \frac{1}{q}} v(h_i). \] 
 Recall that other choices of pull-backs or discretizations yield equivalent weights; 
 see Remark \ref{rem:discrete_weights} and Lemma \ref{lem:pull-back}.  

\subsection{Continuity, Part I}

For estimating the decomposition space norm of an $f \in \mathcal{S}_{\mathcal{O}} (\RR^d)$, 
we will make use of the following localization result \cite[Lemma 34]{fuehr2015wavelet}. 

\begin{lemma}[\cite{fuehr2015wavelet}] \label{lem:localization}
Let $U \subseteq H \leq \mathrm{GL}(d, \mathbb{R})$ be relatively compact and measurable. For $f, \psi \in \Schwartz(\RR^d)$, 
\[
\bigg( \mathcal{F}^{-1} \big( \varphi_U \cdot \widehat{f} \; \big) \bigg) (x) 
= \int_U |\det (h) |^{-3/2} \big(W_{\psi} f (\cdot, h) \ast D_{h^{-T}} \psi\big) (x) \; d\mu_H (h)
\]
for any $x \in \RR^d$, where $D_{h} f := f(h^T \cdot)$.
\end{lemma}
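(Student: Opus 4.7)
The plan is a direct computation on the Fourier side followed by an exchange of integration order. The identity essentially says that the cut-off $\mathcal{F}^{-1}(\varphi_U \cdot \hat f)$ reassembles itself as an average of localized reconstructions of $f$ from its wavelet coefficients over the slice $U \subseteq H$, so it is natural to rewrite both sides in frequency variables and compare.

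First I would record the basic Fourier identities. For $\psi \in \Schwartz(\RR^d)$ and $(x,h) \in G$, a change of variables gives
\[
\widehat{\pi(x,h)\psi}(\xi) = |\det h|^{1/2}\, e^{-2\pi i \xi x}\, \widehat{\psi}(h^T \xi),
\]
so that
\[
W_{\psi} f(\cdot, h) = |\det h|^{1/2}\, \mathcal{F}^{-1}\bigl(\widehat{f}\cdot \overline{D_h \widehat{\psi}}\bigr).
\]
Similarly, a direct change of variables yields
\[
\mathcal{F}(D_{h^{-T}} \psi)(\xi) = |\det h|\cdot D_h \widehat{\psi}(\xi).
\]
Combining these via the convolution theorem gives the key pointwise identity
\[
\bigl(W_{\psi} f(\cdot, h) \ast D_{h^{-T}}\psi\bigr)(x) = |\det h|^{3/2}\, \mathcal{F}^{-1}\bigl(\widehat{f}\cdot |D_h \widehat{\psi}|^2\bigr)(x).
\]

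Next I would multiply by $|\det h|^{-3/2}$, integrate over $U$, and insert the inverse Fourier integral explicitly, obtaining
\[
\int_U |\det h|^{-3/2}\bigl(W_\psi f(\cdot, h) \ast D_{h^{-T}}\psi\bigr)(x)\, d\mu_H(h) = \int_U \int_{\RHat^d} \widehat{f}(\xi)\, |\widehat{\psi}(h^T \xi)|^2\, e^{2\pi i \xi x}\, d\xi\, d\mu_H(h).
\]
An application of Fubini's theorem then swaps the two integrals and, by the very definition of $\varphi_U$, produces $\mathcal{F}^{-1}(\varphi_U \cdot \widehat f)(x)$.

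The only point requiring care is the justification of Fubini, and this is where I would be careful but not expect serious difficulty. Since $U$ is relatively compact and $\psi \in \Schwartz(\RR^d)$, the map $(h,\xi) \mapsto |\widehat{\psi}(h^T \xi)|^2$ is bounded on $\overline{U}\times \RHat^d$ and, using Schwartz decay and the boundedness of $\{h^T : h \in \overline U\}$ as a family of linear maps, one has a pointwise majorant of the form $(h,\xi)\mapsto \mathds{1}_U(h)\cdot C\,|\widehat f(\xi)|$, which lies in $L^1(U \times \RHat^d, d\mu_H \otimes d\xi)$ because $\widehat f \in \Schwartz(\RHat^d) \subset L^1(\RHat^d)$ and $\mu_H(U) < \infty$. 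Hence Fubini applies, completing the identification.
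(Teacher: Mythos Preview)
Your proof is correct: the Fourier identities you record are accurate (with the paper's convention $D_h g = g(h^T\cdot)$), the convolution-theorem step cleanly yields $|\det h|^{3/2}\,\mathcal{F}^{-1}(\widehat f\cdot |D_h\widehat\psi|^2)$, and the Fubini justification via the integrable majorant $\mathds{1}_U(h)\,\|\widehat\psi\|_\infty^2\,|\widehat f(\xi)|$ is sound. The paper does not actually prove this lemma but simply cites \cite[Lemma 34]{fuehr2015wavelet}; your direct Fourier-side computation is the natural argument and almost certainly what that reference contains.
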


\begin{proposition} \label{prop:continuity1}
Let $\psi \in \mathcal{S}_{\mathcal{O}} (\RR^d)$ be admissible 
and let $(h_i)_{i \in I}$ be well-spread in $H$
with $H = \bigcup_{i \in I} h_i U$ for some relatively compact unit neighborhood $U \subseteq H$. 
Let $Q = U^{-T} \big( \widehat{\psi}^{-1} (\CC \setminus \{0\}) \big)$ and 
let $\mathcal{Q} = (h_i^{-T} Q)_{i \in I}$
be the corresponding induced covering of $\mathcal{O}$. 
Then 
\[
\| f \|_{\mathcal{D} (\mathcal{Q}, L^p, \ell^q_u )} \lesssim \|f\|_{\CoLw}, \quad f \in \mathcal{S}_{\mathcal{O}} (\RR^d)
\]
for all $p,q \in [1,\infty]$
\end{proposition}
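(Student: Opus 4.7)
The plan is to work pointwise on the index $i \in I$, applying the localization formula of Lemma \ref{lem:localization} to convert the BAPU-weighted Fourier decomposition of $f$ back into an integral against the wavelet transform $W_{\psi} f$, and then pass from an $\ell^q$ sum over $I$ to the defining integral of $\|W_{\psi} f\|_{L^{p,q}_v}$ via the disjointness of the sets $(U_i)_{i\in I}$ constructed in Theorem \ref{thm:Q-BAPU}.

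First, for a fixed $i\in I$, apply Lemma \ref{lem:localization} with $U = U_i$. Since the BAPU element is exactly $\varphi_{U_i}$, this gives
\[
\mathcal{F}^{-1}\bigl(\varphi_{U_i}\cdot \widehat{f}\,\bigr)(x)
= \int_{U_i} |\det h|^{-3/2}\,\bigl(W_{\psi}f(\cdot,h)\ast D_{h^{-T}}\psi\bigr)(x)\,d\mu_H(h).
\]
Taking $L^p$-norms in $x$, applying Minkowski's integral inequality and Young's convolution inequality $\|g\ast k\|_{L^p}\le \|g\|_{L^p}\|k\|_{L^1}$, and noting that $\|D_{h^{-T}}\psi\|_{L^1} = |\det h|\,\|\psi\|_{L^1}$, yields
\[
F_i := \bigl\|\mathcal{F}^{-1}(\varphi_{U_i}\widehat{f}\,)\bigr\|_{L^p}
\;\le\; \|\psi\|_{L^1}\int_{U_i} |\det h|^{-1/2}\,\|W_{\psi}f(\cdot,h)\|_{L^p}\,d\mu_H(h).
\]

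Next, I would raise $F_i$ to the power $q$ (treating the case $q=\infty$ with a supremum). Since each $U_i\subseteq h_iU$ has $\mu_H(U_i)\le \mu_H(U)$ uniformly bounded, H\"older's inequality on $U_i$ gives
\[
F_i^q \;\lesssim\; \int_{U_i} |\det h|^{-q/2}\,\|W_{\psi}f(\cdot,h)\|_{L^p}^{q}\,d\mu_H(h).
\]
For $h \in U_i\subseteq h_iU$, local boundedness of $v_0$ (the submultiplicative majorant) and of $h\mapsto |\det h|$ on the relatively compact $U$ yield $v(h)\asymp v(h_i)$ and $|\det h|\asymp |\det h_i|$, with implicit constants independent of $i$. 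Consequently
\[
u_i^{\,q}\,F_i^{\,q}
= |\det h_i|^{q/2-1}\,v(h_i)^{q}\,F_i^{\,q}
\;\lesssim\; \int_{U_i} v(h)^{q}\,\|W_{\psi}f(\cdot,h)\|_{L^p}^{q}\,\frac{d\mu_H(h)}{|\det h|}.
\]

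Finally, the sets $(U_i)_{i\in I}$ form a measurable partition of $H$ by Theorem \ref{thm:Q-BAPU}(i), so summing over $i\in I$ collapses the right-hand side to an integral over all of $H$:
\[
\|f\|_{\mathcal{D}(\mathcal{Q},L^p,\ell^q_u)}^{q}
= \sum_{i\in I} u_i^{q} F_i^{q}
\;\lesssim\; \int_H v(h)^{q}\,\|W_{\psi}f(\cdot,h)\|_{L^p}^{q}\,\frac{d\mu_H(h)}{|\det h|}
= \|W_{\psi}f\|_{L^{p,q}_v}^{q}
= \|f\|_{\CoLw}^{q}.
\]
For $q=\infty$, the same chain runs with a supremum in place of the integral over $i$; the H\"older step is replaced by pulling the bound $v(h)^{-1}\|W_\psi f\|_{L^{p,\infty}_v}$ out of the integral.

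The main obstacle is bookkeeping: verifying that the factor $|\det h|^{-1/2}$ inherited from the localization formula, combined with the Haar-density $|\det h|^{-1}$ of $\mu_G$ relative to Lebesgue on $H$ and the normalization $u_i = |\det h_i|^{1/2-1/q}v(h_i)$, balances exactly so that the product $u_i^q F_i^q$ matches the $L^{p,q}_v$-integrand after the local replacement $h\leftrightarrow h_i$. Once this book-keeping is done, the disjointness of $(U_i)$ makes the final summation step immediate.
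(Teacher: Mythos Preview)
Your proposal is correct and follows essentially the same route as the paper's proof: apply the localization Lemma \ref{lem:localization} with $U=U_i$, estimate the $L^p$-norm via Minkowski and Young, raise to the $q$th power via H\"older/Jensen using the uniform bound $\mu_H(U_i)\le \mu_H(\overline{U})$, replace $|\det h|$ and $v(h)$ by $|\det h_i|$ and $v(h_i)$ using the relative compactness of $U$, and then sum over $i$ using the partition property of $(U_i)_{i\in I}$. The bookkeeping you flag as the main obstacle is handled in the paper exactly as you outline, and your treatment of the $q=\infty$ case matches as well.
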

\begin{proof}
Throughout the proof, let $p \in [1,\infty]$ and let $(\varphi_{U_i})_{i \in I}$ be a $\mathcal{Q}$-BAPU as guaranteed by Theorem \ref{thm:Q-BAPU}. 
An application of Lemma \ref{lem:localization} and Young's inequality give
\begin{align*}
\bigg\| \mathcal{F}^{-1} \big( \varphi_{U_i} \cdot \widehat{f} \big) \bigg\|_{L^p} 
&\leq \int_{U_i} |\det(h)|^{-\frac{3}{2}} \| W_{\psi} f (\cdot, h) \ast D_{h^{-T}} \psi \|_{L^p} \; d\mu_H (h) \\
&\leq \| \psi \|_{L^1} \int_{U_i} |\det (h)|^{-\frac{1}{2}} \| W_{\psi} f (\cdot, h) \|_{L^p} \; d\mu_H (h). 
\numberthis \label{eq:localization1}
\end{align*}
for fixed $i \in I$. 

\textbf{Step 1.} (Case $1 \leq q < \infty$). 
Assume that $\mu_H (U_i) > 0$. If $q \in [1,\infty)$, then 
using \eqref{eq:localization1}, together with Jensen's inequality, gives
\begin{align*}
&\bigg\| \mathcal{F}^{-1} \big( \varphi_{U_i} \cdot \widehat{f} \big) \bigg\|^q_{L^p} \\
&\leq \bigg( \| \psi \|_{L^1} \mu_H (U_i) \bigg)^q \frac{1}{\mu_H (U_i)} \int_{U_i} \bigg( |\det (h)|^{-\frac{1}{2}} 
\|W_{\psi} f (\cdot, h) \|_{L^p} \bigg)^q \; d\mu_H (h) \\
&\leq \bigg( \| \psi \|_{L^1} (\mu_H (\overline{U}))^{1-\frac{1}{q}} \bigg)^{q} \int_{U_i} \bigg( |\det (h)|^{\frac{1}{q}-\frac{1}{2}} 
\|W_{\psi} f (\cdot, h) \|_{L^p} \bigg)^q \; \frac{d\mu_H (h)}{|\det (h)|}, \numberthis \label{eq:localization2}
\end{align*}
where we also used that $\mu_H (U_i) \leq \mu_H (\overline{U}) < \infty$. 
Clearly, the estimate \eqref{eq:localization2} remains valid if $\mu_H (U_i) = 0$. 

Write $h \in U_i$ as $h = h_i u$ for some $u \in \overline{U}$ and set 
$C_1 := \min_{h \in \overline{U}} |\det (h)|$ and $C_2 := \max_{h \in \overline{U}} |\det (h)|$. 
Then
\begin{align} \label{eq:C3}
\frac{|\det (h)|^{\frac{1}{q}-\frac{1}{2}}}{|\det (h_i)|^{\frac{1}{q} - \frac{1}{2}}} 
\leq \max \bigg\{C_1^{\frac{1}{q} - \frac{1}{2}}, C_2^{\frac{1}{q} - \frac{1}{2}} \bigg\} =: C_3. 
\end{align}
Recall that $u_i = |\det(h_i)|^{\frac{1}{2} - \frac{1}{q}} v(h_i) = v(h_i) \big(|\det (h_i)|^{\frac{1}{q} - \frac{1}{2}}\big)^{-1}$. Using inequalities \eqref{eq:localization2} and \eqref{eq:C3}, 
a direct calculation gives
\begin{align*}
&\sum_{i \in I} \bigg( u_i \big\| \mathcal{F}^{-1} \big( \varphi_{U_i} \cdot \widehat{f} \big) \big\|_{L^p} \bigg)^q \\
&\leq \bigg(\| \psi \|_{L^1} \big(\mu_H (\overline{U})\big)^{1 - \frac{1}{q}}\bigg)^q 
\sum_{i \in I} \bigg(  \int_{U_i} \bigg( \frac{|\det (h)|^{\frac{1}{q} - \frac{1}{2}} }{|\det (h_i)|^{\frac{1}{q} - \frac{1}{2}}} v(h_i) \; \| W_{\psi} f (\cdot, h) \|_{L^p} \bigg)^q \frac{d\mu_H (h)}{|\det (h)|} \bigg) \\
&\leq C_3^q C_v^q \bigg(\| \psi \|_{L^1} \big(\mu_H (\overline{U})\big)^{1 - \frac{1}{q}}\bigg)^q
\sum_{i \in I} \bigg(  \int_{U_i} \bigg( v(h)  \| W_{\psi} f (\cdot, h) \|_{L^p} \bigg)^q \frac{d\mu_H (h)}{|\det (h)|} \bigg) \\
&= C_3^q C_v^q \bigg(\| \psi \|_{L^1} \big(\mu_H (\overline{U})\big)^{1 - \frac{1}{q}}\bigg)^q \| W_{\psi} f \|_{L^{p,q}_v}^q, 
\end{align*}
where $C_v > 0$ is such that $v(h_i) \leq C_v v(h)$. 

\textbf{Step 2.} (Case $q = \infty$.) If $q = \infty$, then \eqref{eq:localization1} gives
\begin{align*}
\sup_{i \in I} \bigg( u_i  \bigg\| \mathcal{F}^{-1} \big( \varphi_{U_i} \cdot \widehat{f} \big) \bigg\|_{L^p} \bigg) 
&\leq \| \psi \|_{L^1} \sup_{i \in I} \int_{U_i} \bigg(\frac{|\det(h)|}{|\det(h_i)|} \bigg)^{-\frac{1}{2}} v (h_i) \| W_{\psi} f (\cdot, h) \|_{L^p} \; d\mu_H (h) \\
&\leq C_1^{-\frac{1}{2}} C_v \| \psi \|_{L^1} \sup_{i \in I} \int_{U_i} v(h) \| W_{\psi} f (\cdot, h) \|_{L^p} \; d\mu_H (h) \\
&\leq C_1^{-\frac{1}{2}} C_v \mu_H (\overline{U}) \| \psi \|_{L^1} \| W_{\psi} f \|_{L^{p,q}_v},
\end{align*} 
which yields the result. 
\end{proof}

\subsection{Continuity, Part II}

We next complement Proposition \ref{prop:continuity1} with a reverse inequality. 

\begin{proposition} \label{prop:continuity2}
Let $\mathcal{Q} = (h_i^{-T} Q)_{i \in I}$ be any cover of $\mathcal{O}$ induced by $H$
admitting a BAPU $\Phi = (\varphi_i)_{i \in I}$ subordinate to $\mathcal{Q}$.  
Then
\[ \| f \|_{\CoLw} \lesssim \|f \|_{\mathcal{D} (\mathcal{Q}, L^p, \ell^q_u)}, \quad f \in \mathcal{S}_{\mathcal{O}} (\mathbb{R}^d)
\]
for all $p,q \in [1,\infty]$,
\end{proposition}
\begin{proof}
Let $(\varphi_i)_{i \in I}$ be a $\mathcal{Q}$-BAPU 
and let $f, \psi \in \mathcal{S}_{\mathcal{O}} (\RR^d)$ with $\psi$ admissible. 
Set $K := \supp \widehat{\psi}$.   
Then an application of Lemma \ref{lem:neighborhoods_bounded} provides a constant $C_1 > 0$ 
such that $\# I_h \leq C_1$ for all $h \in H$,
with 
\[ I_h = I_h(K,Q) := \{ i \in I \; | \; h^{-T} K \cap h_i^{-T} \overline{Q} \neq \emptyset \}. \] 
For fixed $h \in H$, note that 
\[
1 = \sum_{i \in I} \varphi_i (\xi) = \sum_{i \in I_h} \varphi_i(\xi)
\]
 for all $\xi \in h^{-T} K$.  Hence, for fixed $(x,h) \in \mathbb{R}^d \rtimes H$, 
\begin{align*}
W_{\psi} f (x,h) = \langle \widehat{f}, \widehat{\pi (x,h) \psi} \rangle 
= \sum_{i \in I_h} \big \langle \widehat{f} \cdot \varphi_i, \widehat{\pi(x,h) \psi} \big\rangle
= \sum_{i \in I_h} \big \langle \mathcal{F}^{-1} ( \widehat{f} \cdot \varphi_i ), \pi (x,h) \psi \big \rangle
\end{align*}
and
\begin{align*}
\big \langle \mathcal{F}^{-1} ( \widehat{f} \cdot \varphi_i ), \pi (x,h) \psi \big \rangle
&= |\det (h)|^{-\frac{1}{2}} \int_{\mathbb{R}^d} \big( \mathcal{F}^{-1} (\widehat{f} \cdot \varphi_i) \big) (y) 
\overline{D_{h^{-T}} \psi (y-x) } \; d y \\
&= |\det (h)|^{-\frac{1}{2}} \big( \big( \mathcal{F}^{-1} (\widehat{f} \cdot \psi_i) \big) 
\ast \big( D_{h^{-T}} \psi^*) \big) (x),
\end{align*}
where $\psi^* := \overline{\psi(- \cdot)}$.
Therefore, an application of Young's inequality yields
\begin{align*}
\big\| W_{\psi} f (\cdot, h) \big\|_{L^p} 
&\leq |\det (h)|^{-\frac{1}{2}} \sum_{i \in I_h} 
\big\| \big ( \mathcal{F}^{-1} (\widehat{f} \cdot \varphi_i) \ast \big( D_{h^{-T}} \psi^* \big) \big\|_{L^p} \\
&\leq |\det (h) |^{\frac{1}{2}} \| \psi^* \|_{L^1} \sum_{i \in I_h} \big\| \mathcal{F}^{-1} (\widehat{f} \cdot \varphi_i) \big\|_{L^p}. \numberthis \label{eq:W-L^p-estimate}
\end{align*}
We split the remaining part of the proof into three steps:

\textbf{Step 1.} (Auxiliary cover). 
Recall that since $(h_i)_{i \in I}$ is well-spread, there exists a relatively compact, 
measurable set $U \subseteq H$ such that $H = \bigcup_{i \in I} h_i U$. 
Setting $K_2 := \overline{U}^{-T} K \cup \overline{Q}$ therefore yields an induced cover
$\mathcal{Q}' = (Q_i')_{i \in I}$, where $Q_i' := h_i^{-T} K_2$. 
Hence 
$N_{\mathcal{Q}'} := \sup_{i \in I} i^*_{\mathcal{Q}'} < \infty$, 
where $i_{\mathcal{Q}'}^* := \{ \ell \in I \; : \; Q'_{\ell} \cap Q'_i \neq \emptyset\}$, 
since an induced cover is admissible by Proposition \ref{prop:induced_admissible}. 
For $i \in I$,  $h \in h_i U$ and $j \in I_h$, note that 
\[ \emptyset \neq h^{-T} K \cap h_j^{-T} \overline{Q} \subset Q_i' \cap Q_j',\] 
and thus $I_h \subset i^*_{\mathcal{Q}'}$ for any $i \in I$ and $h \in h_i U$.

\textbf{Step 2.} (Case $1\leq q < \infty$). 
Let $q \in [1,\infty[$, and let $i \in I$ and $h \in h_i U$. 
Using the just proven inclusion $I_h \subset i^*_{\mathcal{Q}'}$ in Step 1, 
we use \eqref{eq:W-L^p-estimate} to estimate 
\begin{align*}
&|\det (h)|^{-1} \bigg( v(h) \big \| W_{\psi} f (\cdot, h) \big\|_{L^p} \bigg)^q \\
&\leq |\det (h)|^{-1} \bigg( v(h) |\det(h)|^{\frac{1}{2}} \|\psi^* \|_{L^1} \sum_{\ell \in i^*_{\mathcal{Q}'}} \big\| \mathcal{F}^{-1} (\widehat{f} \cdot \varphi_{\ell} ) \big\|_{L^p} \bigg)^q \\
&\leq \bigg( \| \psi^* \|_{L^1} |\det (h) |^{\frac{1}{2} - \frac{1}{q}} v(h) \bigg)^q N_{\mathcal{Q}'}^q 
 \bigg\| \bigg(\big\| \mathcal{F}^{-1} (\widehat{f} \cdot \varphi_{\ell} ) \big\|^q_{L^p} \bigg)_{\ell \in i^*_{\mathcal{Q}}} \bigg\|_{\ell^{\infty}} \\
&\leq \bigg( N_{\mathcal{Q}'}   \| \psi^* \|_{L^1} |\det(h)|^{\frac{1}{2} - \frac{1}{q}} v(h) \bigg)^q \; \bigg\| \bigg( \big\| \mathcal{F}^{-1} (\widehat{f} \cdot \varphi_{\ell} ) \big\|^q_{L^p} \bigg)_{\ell \in i^*_{\mathcal{Q}}} \bigg\|_{\ell^{1}}. \numberthis \label{eq:deth_estimate}
\end{align*}
Define $C' := \max_{u \in \overline{U}} |\det(u)|^{\frac{1}{2} - \frac{1}{q}} < \infty$, such that
$ |\det(h)|^{\frac{1}{2} - \frac{1}{q}} \leq C' |\det(h_i)|^{\frac{1}{2} - \frac{1}{q}}$.
Moreover, let
 $C_v > 0$ be such that $v(h) \leq C_v v(h_i)$. 
Then 
\[
|\det (h)|^{\frac{1}{2} - \frac{1}{q}} v(h) \leq C_v C' |\det (h_i )|^{\frac{1}{2} - \frac{1}{q}} v(h_i) 
= C_v C' u_i. 
\]
Since $u : I \to \mathbb{R}^+$ is $\mathcal{Q}'$-moderate by Lemma \ref{lem:pull-back}, 
there exists  $C_u > 0$ such that $u_i \leq C_u u_{\ell}$ for all $i \in I$ and $\ell \in i^*_{\mathcal{Q}'}$. 
Therefore, we can estimate \eqref{eq:deth_estimate} by
\begin{align*}
|\det (h)|^{-1} \bigg( v(h) \big \| W_{\psi} f (\cdot, h) \big\|_{L^p} \bigg)^q 
\leq \bigg( C_{u} N_{\mathcal{Q}'} C' C_v \| \psi^* \|_{L^1} \bigg)^q \; \sum_{\ell \in i^*_{\mathcal{Q}'}} \bigg( u_{\ell} \big\| \mathcal{F}^{-1} ( \widehat{f} \cdot \varphi_{\ell} ) \big\|_{L^p} \bigg)^q.
\end{align*}
By setting $C_2 := C_{u} N_{\mathcal{Q}'} C' C_v \| \psi^* \|_{L^1}$, we therefore get
\begin{align*}
\big \| W_{\psi} f \big \|_{L^{p,q}_v}^q 
&\leq \sum_{i \in I} \int_{h_i U} \bigg( v(h) \big\| W_{\psi} f (\cdot, h) \big\|_{L^p} \bigg)^q \frac{d\mu_H (h)}{|\det (h)|} \\
&\leq \mu_H (\overline{U}) C_2^q \sum_{i \in I} \sum_{\ell \in i^*_{\mathcal{Q}'}} \bigg( u_{\ell} \big\| \mathcal{F}^{-1} (\widehat{f} \cdot \varphi_{\ell} ) \big \|_{L^p} \bigg)^q \\
&\leq \mu_H (\overline{U}) C_2^q N_{\mathcal{Q}'} \| f \|_{\mathcal{D}(\mathcal{Q}, L^p, \ell^q_u)}^q,
\end{align*}
where it is used that $\ell \in i^*_{\mathcal{Q}'}$ if, and only if, $i \in \ell^*_{\mathcal{Q}'}$.  

\textbf{Step 3.} (Case $q = \infty$). 
If $q = \infty$, let $h \in H$ be arbitrary and choose $i \in I$ such that $h \in h_i U$. 
Then a direct calculation using \eqref{eq:W-L^p-estimate} gives
\begin{align*}
v(h) \big\| W_{\psi} f (\cdot, h) \big\|_{L^p} 
&\leq C_v \max_{h \in \overline{U}} |\det (h) |^{\frac{1}{2} - \frac{1}{q}} \| \psi^* \|_{L^1} 
\sum_{\ell \in I_h}  u_i \big\| \mathcal{F}^{-1} (\widehat{f} \cdot \varphi_i) \big\|_{L^p} \\
&\leq C_v C_{\mathcal{Q}'} N_{\mathcal{Q}'} \max_{h \in \overline{U}} |\det (h) |^{\frac{1}{2} - \frac{1}{q}} \| \psi^* \|_{L^1} 
\sup_{\ell \in I} \bigg( u_{\ell} \big\| \mathcal{F}^{-1} (\widehat{f} \cdot \varphi_i) \big\|_{L^p} \bigg) \\
&= C_v C_{\mathcal{Q}'} N_{\mathcal{Q}'} \max_{h \in \overline{U}} |\det (h) |^{\frac{1}{2} - \frac{1}{q}} \| \psi^* \|_{L^1} 
\| f \|_{\mathcal{D}(\mathcal{Q}, L^p, \ell^q_u)}. 
\end{align*}
This completes the proof. 
\end{proof}

\subsection{Isomorphism}
This section finishes the proof of the identification of a coorbit space with a suitable
associated decomposition space. For this, we first relate the reservoirs $\SOdual$ and $Z'(\mathcal{O})$ defining 
$\CoSY$ and $\mathcal{D}(\mathcal{Q}, L^p, \ell^q_u)$, respectively. 

\begin{lemma} \label{lem:canonical_identification}
Let $\mathcal{O} \subset \RHat^d$ be an open set. Then the map 
$\Gamma : \SOdual \to Z'(\mathcal{O})$
defined by 
$
\Gamma (f) (\phi) := \langle f, \overline{\phi} \rangle
$
forms a linear homeomorphism. 
\end{lemma}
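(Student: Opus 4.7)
The plan is to realize $\Phi$ as essentially the pull-back along a suitable conjugation map between the two test function spaces. Observe first that for any $\phi = \mathcal{F}\eta \in Z(\mathcal{O})$ with $\eta \in \mathcal{D}(\mathcal{O})$, a direct computation shows
\[
\overline{\phi}(x) = \overline{(\mathcal{F}\eta)(x)} = \mathcal{F}^{-1}(\overline{\eta})(x),
\]
so that $\overline{\phi} \in \mathcal{F}^{-1}(\mathcal{D}(\mathcal{O})) = \mathcal{S}_{\mathcal{O}}(\RR^d)$. Consequently the formula $C\phi := \overline{\phi}$ defines a map $C : Z(\mathcal{O}) \to \mathcal{S}_{\mathcal{O}}(\RR^d)$. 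Factoring $C = \mathcal{F}^{-1} \circ c_{\mathcal{D}} \circ \mathcal{F}^{-1}$, where $c_{\mathcal{D}}$ is complex conjugation on $\mathcal{D}(\mathcal{O})$ (a standard continuous conjugate-linear automorphism of $\mathcal{D}(\mathcal{O})$), and using that both Fourier transforms defining $Z(\mathcal{O})$ and $\mathcal{S}_{\mathcal{O}}(\RR^d)$ are homeomorphisms by construction, one sees that $C$ is a continuous conjugate-linear bijection whose inverse $C^{-1}: \psi \mapsto \overline{\psi}$ is likewise continuous.

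Given this, I would then check that $\Phi(f) = f \circ C$ lies in $Z'(\mathcal{O})$: the composition of the continuous conjugate-linear maps $C$ and $f \in \SOdual$ is continuous and \emph{linear} (the two conjugations cancel), so $\Phi(f) \in Z'(\mathcal{O})$. Linearity of $\Phi$ in $f$ is immediate from the definition. For bijectivity, I would exhibit the explicit inverse $\Psi : Z'(\mathcal{O}) \to \SOdual$ defined by $\Psi(g)(\psi) = g(\overline{\psi}) = g(C^{-1}\psi)$. A symmetric argument, applied to the continuous conjugate-linear bijection $C^{-1}$, shows that $\Psi(g) \in \SOdual$, and $\Phi \circ \Psi = \mathrm{id}_{Z'(\mathcal{O})}$ and $\Psi \circ \Phi = \mathrm{id}_{\SOdual}$ follow immediately from $C \circ C^{-1} = C^{-1} \circ C = \mathrm{id}$.

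Finally, weak$^*$-continuity of $\Phi$ and $\Phi^{-1} = \Psi$ is a direct consequence of the explicit formulas: a net $f_\alpha \to f$ in $\SOdual$ means $\langle f_\alpha, \psi\rangle \to \langle f, \psi\rangle$ for every $\psi \in \mathcal{S}_{\mathcal{O}}(\RR^d)$, and specializing to $\psi = \overline{\phi}$ for $\phi \in Z(\mathcal{O})$ yields $\Phi(f_\alpha)(\phi) \to \Phi(f)(\phi)$; the converse is entirely analogous. There is no real obstacle here: the only point requiring attention is correctly tracking the conjugation so that the conjugate-linearity of $f$ and the conjugate-linearity of $C$ combine to produce a \emph{linear} functional on $Z(\mathcal{O})$, which is precisely what makes the definition of $\Phi$ work.
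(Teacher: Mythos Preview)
Your proposal is correct and follows essentially the same approach as the paper: the paper's proof simply observes that $\mathcal{S}_{\mathcal{O}}(\RR^d) \ni \phi \mapsto \overline{\phi} \in Z(\mathcal{O})$ is a homeomorphism and then says the claim follows by duality. You have unpacked exactly this duality argument in detail, including the verification that the two conjugate-linearities combine to give a genuine linear functional and the explicit check of weak$^*$-continuity, which the paper leaves implicit.
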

\begin{proof}
The map $ \mathcal{S}_{\mathcal{O}} (\RR^d) \ni \phi \mapsto \overline{\phi} \in Z(\mathcal{O})$ forms a homeomorphism. 
The claim follows therefore by duality. 
\end{proof}

In the sequel, we will canonically identify $f \in \SOdual$ and $\Gamma (f) \in Z'(\mathcal{O})$ and write, with an abuse of notation, simply $f = \Theta (f)$. 

\begin{theorem} \label{thm:identification}
Let $H \leq \mathrm{GL}(d, \mathbb{R})$ be integrably admissible 
with essential frequency support $\mathcal{O} \subseteq \RHat^d$.
Let $\mathcal{Q} = (h_i^{-T} Q)_{i \in I}$ be any cover of $\mathcal{O}$ induced by $H$
admitting a BAPU $\Phi = (\varphi_i)_{i \in I}$ subordinate to $\mathcal{Q}$.  
Suppose that $\mathbb{R}^d \rtimes H \ni (x,h) \mapsto v(h) \in \mathbb{R}^+$ is an admissible weight
and let $u : \mathcal{O} \to \RR^+$ be a pull-back of $h \mapsto |\det (h^{-1})|^{\frac{1}{2} - \frac{1}{q}} v(h^{-1})$. Then
\[
\| \cdot \|_{\CoLw} \asymp \| \cdot \|_{\mathcal{D}(\mathcal{Q}, L^p, \ell^q_u)}
\]
 for all $p,q \in [1,\infty]$.
In particular, the spaces $\CoLw$ and $\mathcal{D}(\mathcal{Q}, L^p, \ell^q_u)$ are isomorphic 
as Banach spaces.
\end{theorem}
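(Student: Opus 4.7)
The plan is to upgrade the one-sided estimates on the dense subspace $\mathcal{S}_{\mathcal{O}}(\RR^d)$ established in Propositions \ref{prop:continuity1} and \ref{prop:continuity2} to the whole spaces, by combining approximation procedures with uniform norm control in the source space with a Fatou-type lower semicontinuity in the target space. Throughout, I identify $\SOdual$ with $Z'(\mathcal{O})$ via the homeomorphism $\Phi$ of Lemma \ref{lem:canonical_identification}, so that both $\CoLw$ and $\mathcal{D}(\mathcal{Q}, L^p, \ell^q_u)$ sit inside a common distribution space and weak-$*$ convergence agrees in the two pictures.

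For the inclusion $\CoLw \hookrightarrow \mathcal{D}(\mathcal{Q}, L^p, \ell^q_u)$, I take $f \in \CoLw$ and invoke Corollary \ref{cor:density_coorbit} to produce $(f_n) \subset \mathcal{S}_{\mathcal{O}}(\RR^d)$ with $f_n \to f$ in $\SOdual$ and $\sup_n \|f_n\|_{\CoLw} \lesssim \|f\|_{\CoLw}$. Proposition \ref{prop:continuity1} then yields $\sup_n \|f_n\|_{\mathcal{D}(\mathcal{Q}, L^p, \ell^q_u)} \lesssim \|f\|_{\CoLw}$, and the Fatou-type Lemma \ref{lem:fatou} applied to $f_n \to f$ in $Z'(\mathcal{O})$ places $f$ into $\mathcal{D}(\mathcal{Q}, L^p, \ell^q_u)$ with the desired norm bound.

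For the reverse inclusion, let $f \in \mathcal{D}(\mathcal{Q}, L^p, \ell^q_u)$. Lemma \ref{lem:Density_decomposition}(iii) furnishes an $F \in \ell^q_u(I; L^p)$ with $\|F\|_{\ell^q_u(I; L^p)} \lesssim \|f\|_{\mathcal{D}(\mathcal{Q}, L^p, \ell^q_u)}$ and an $F$-dominated sequence $(f_n) \subset \mathcal{S}_{\mathcal{O}}(\RR^d)$ with $f_n \to f$ in $Z'(\mathcal{O})$. Domination gives a uniform bound on the decomposition-space norms of $f_n$, and Proposition \ref{prop:continuity2} translates this into $\sup_n \|f_n\|_{\CoLw} \lesssim \|f\|_{\mathcal{D}(\mathcal{Q}, L^p, \ell^q_u)}$. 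Since $\Phi$ is a weak-$*$ homeomorphism, $f_n \to f$ in $\SOdual$; combined with the $\pi$-invariance of $\mathcal{S}_{\mathcal{O}}(\RR^d)$ this yields, for every $(x,h) \in G$, the pointwise convergence
\[
W_\psi f_n(x,h) = \langle f_n, \pi(x,h)\psi \rangle \longrightarrow \langle f, \pi(x,h)\psi \rangle = W_\psi f(x,h).
\]
Applying Fatou's lemma in the $L^p$-variable and then in the $L^q$-variable (with the usual modification via lower semicontinuity of $\esssup$ when $p$ or $q$ equals infinity) gives $W_\psi f \in L^{p,q}_v(G)$ with $\|W_\psi f\|_{L^{p,q}_v} \leq \liminf_n \|W_\psi f_n\|_{L^{p,q}_v}$, so that $f \in \CoLw$ with the matching norm bound.

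The main technical point is the Fatou-type argument on the coorbit side: it relies crucially on the fact that the weak-$*$ convergence $f_n \to f$ in $\SOdual$ is strong enough to produce \emph{pointwise} convergence of the full family of wavelet coefficients $(W_\psi f_n(x,h))$, which is ensured by the $\pi$-invariance of the analyzing space together with the identification $\Phi$. Once this pointwise convergence is in hand, solidity of $L^{p,q}_v(G)$ and iterated application of Fatou (or $\esssup$ lower semicontinuity in the endpoint exponents) handle every combination of $p, q \in [1,\infty]$ uniformly; no separate argument for the reflexive and non-reflexive regimes is required.
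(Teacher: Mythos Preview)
Your proof is correct and follows essentially the same strategy as the paper: density via Corollary~\ref{cor:density_coorbit} plus Proposition~\ref{prop:continuity1} and the Fatou-type Lemma~\ref{lem:fatou} for one direction, and Lemma~\ref{lem:Density_decomposition}(iii) plus Proposition~\ref{prop:continuity2} together with pointwise convergence of $W_\psi f_n$ and Fatou in $L^{p,q}_v$ for the other. If anything, you are more explicit than the paper about the Fatou step on the coorbit side (the paper writes $\|f\|_{\CoSY} = \lim_n \|f_n\|_{\CoSY}$ where only $\le \liminf_n$ is actually justified), and your remark on handling the endpoint exponents via lower semicontinuity of the essential supremum is a welcome clarification.
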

\begin{proof}
Let $f \in \CoSY$. By Corollary \ref{cor:density_coorbit}, there exists a sequence
$(f_n)_{n \in \mathbb{N}}$ of functions $f_n \in \mathcal{S}_{\mathcal{O}} (\RR^d)$ 
satisfying $f_n \to f$ in $\SOdual$ and 
$\| f_n \|_{\CoSY} \leq C \| f \|_{\CoSY}$ for all $n \in \mathbb{N}$. 
Proposition \ref{prop:continuity1} yields a constant $C' > 0$
such that $\| \cdot \|_{\mathcal{D}(\mathcal{Q}, L^p, \ell^q_u)} \leq C' \| \cdot \|_{\CoSY}$
on $\mathcal{S}_{\mathcal{O}} (\RR^d)$. 
Combining this, together with Lemma \ref{lem:fatou}, yields that
\begin{align*}
\| f \|_{\mathcal{D} (\mathcal{Q}, L^p, \ell^q_u)} 
\leq \liminf_{n \to \infty} \| f_n \|_{\mathcal{D} (\mathcal{Q}, L^p, \ell^q_u)} 
\leq C' \liminf_{n \to \infty} \| f_n \|_{\CoSY} 
\leq C C' \| f \|_{\CoSY},
\end{align*}
which gives the embedding $\CoSY \hookrightarrow \mathcal{D} (\mathcal{Q}, L^p, \ell^q_u)$. 

For the reverse, let $f \in \mathcal{D}(\mathcal{Q}, L^p, \ell^q_u)$. Then, by
Lemma \ref{lem:Density_decomposition}, there exists an $F \in \ell^q_u (I; L^p)$ satisfying
$\|F \|_{\ell^q_u (I; L^p)} \leq C_{\Phi, \mathcal{Q}} \| f \|_{\mathcal{D}(\mathcal{Q}, L^p, \ell^q_u)}$. Moreover, there exists a sequence $(f_n)_{n \in \mathbb{N}}$ of $F$-dominated functions $f_n \in \mathcal{S}_{\mathcal{O}} (\RR^d)$ such that $f_n \to f$
in the weak$^*$-topology on $Z'(\mathcal{O})$. Hence
$\lim_{n \to \infty} W_{\psi} f_n (x) = W_{\psi} f (x)$ for all $x \in G$. 
Therefore
\begin{align*}
\| f \|_{\CoSY} &= \lim_{n \to \infty} \| f_n \|_{\CoSY} 
\leq C_0 \lim_{n \to \infty} \| f_n \|_{\mathcal{D}(\mathcal{Q}, L^p, \ell^q_u)} 
\leq C_0  \|F\|_{\ell^q_u(I; L^p)} \\
&\leq C_0 C_{\Phi, \mathcal{Q}} \| f \|_{\mathcal{D} (\mathcal{Q}, L^p, \ell^q_u)},
\end{align*}
where $C_0 > 0$ is provided by Proposition \ref{prop:continuity2}. 
\end{proof}

\section{Examples and applications} \label{sec:examples}

This section provides examples of (classes of) integrably admissible dilation groups
and their associated coorbit spaces. Moreover, several applications of their realization 
as a Besov-type decomposition space  are given. 

\begin{example}
Let $H \leq \mathrm{GL}(d, \RR)$ be (irreducibly) admissible, 
i.e., there exists a single open orbit $\mathcal{O} = H^T \xi_0$, for $\xi_0 \in \RHat^d$, 
of full measure
 for which the stabilizer group $H_{\xi_0}$ is compact. 
Then the quasi-regular representation $(\pi, L^2 (\RR^d))$ of $G = \RR^d \rtimes H$
is a discrete series representation by \cite[Corollary 21]{fuehr2010generalized}. 
Moreover, since $H$ is integrably admissible, the representation $(\pi, L^2 (\RR^d))$
is $w$-integrable for any admissible control weight $w : G \to \mathbb{R}^+$ by Proposition \ref{prop:amalgam_estimate}. 
Thus the coorbit space theory developed in \cite{feichtinger1989banach1, feichtinger1989banach2}
is therefore applicable in this setting. We will relate the original spaces \cite{feichtinger1989banach1, feichtinger1989banach2} to the ones defined in \eqref{eq:definition_coorbit}. 

Let $v : G \to \mathbb{R}^+$ be an admissible weight and let $w : G \to \mathbb{R}^+$ be an admissible control weight of the same type. 
Fix a vector $\psi \in L^2 (\RR^d) \setminus \{0\}$ satisfying $W_{\psi} \psi \in L^1_w (G)$ and 
define 
 \[ \mathcal{H}_{1,w} = \bigg\{ f \in L^2 (\RR^d) \; : \; W_{\psi} f \in L^1_w (G) \bigg\} \neq \emptyset. \]
Denoting by $\overline{\mathcal{H}'_{1,w}}$ the anti-dual space of $\mathcal{H}_{1,w}$, 
the coorbit spaces of \cite{feichtinger1989banach1} are defined in terms of $\overline{\mathcal{H}'_{1,w}}$ as
\begin{align} \label{eq:coorbit_original}
\CoHLw := \bigg\{ f \in \overline{\mathcal{H}'_{1,w}} \; : \; 
 \langle f, \pi(\cdot) \psi \rangle \in L^{p,q}_v (G) \bigg\}. 
\end{align}
Note that any non-zero $\psi \in \mathcal{S}_{\mathcal{O}} (\RR^d)$ can be used to define the space
$\mathcal{H}_{1,w}$. 

By \cite[Corollary 11]{fuehr2015wavelet},
the mapping 
$ 
\Lambda : \overline{\mathcal{H}'_{1,w}} \to Z' (\mathcal{O}), \; f \mapsto \big( \phi \mapsto f (\overline{\phi}) \big)
$
is well-defined, injective, linear and continuous with respect to the weak$^*$-topology on $\overline{\mathcal{H}'_{1,w}}$. 
Using the map $\Gamma : \SOdual \to Z'(\mathcal{O})$ 
of Lemma \ref{lem:canonical_identification}, it follows by \cite[Theorem 38]{fuehr2015wavelet}
that $\Gamma^{-1} \circ \Lambda : \overline{\mathcal{H}'_{1,w}} \to \SOdual$ induces an isometric isomorphism 
$
\Gamma^{-1} \circ \Lambda : \CoHLw \to \CoLw.
$
Thus, up to suitable identification, for all $p,q \in [1,\infty]$, 
\[
\CoHLw = \CoLw,
\]
provided that both spaces are defined in terms of some admissible 
$\psi \in \mathcal{S}_{\mathcal{O}} (\RR^d)$. 

For applications of the realization of a coorbit space as a decomposition space in 
the setting of irredubily admissible dilation groups, the reader is referred to \cite[Section 9]{fuehr2015wavelet} 
and \cite{koch_thesis}. 
\end{example}

The following example treats the anisotropic Besov spaces 
considered in \cite{bownik2005atomic, barrios2011characterizations}. 

\begin{example}
Let $H = \langle A \rangle = \{A^j \; : \; j \in \mathbb{Z} \}$ be the cyclic group generated by $A \in \mathrm{GL}(d, \RR)$. 
Recall that a matrix $A \in \mathrm{GL}(d, \RR)$ is called \emph{expansive} if all its
eigenvalues $\lambda \in \sigma (A)$ satisfy $|\lambda|>1$. 
The dilation group $H = \langle A \rangle$ is integrably admissible if,
and only if, either $A$ or its inverse $A^{-1}$ is expansive, 
see \cite{larson2006explicit, schulz2004projections}. 
In this case, the essential frequency support of the quasi-regular representation $(\pi, L^2 (\RR^d))$ 
of $G = \RR^d \rtimes H$ is given by $\mathcal{O} = \RHat^d \setminus \{0\}$. 

Given an open set $C \subset \RHat^d$ such that $\overline{C} \subset \RHat^d \setminus \{0\}$
is compact, a cover $\mathcal{Q}_A = (Q_j)_{j \in \mathbb{Z}}$ of $\RHat^d \setminus \{0\}$, with
$Q_j := A^j \overline{C}$, is called an \emph{homogeneous cover induced by A}, 
see \cite{fuehr_classification}. 
A homogeneous cover induced by $A^T$ is readily seen to be an induced cover of 
$\mathcal{O}$ in the sense of Definition 
\ref{def:induced_cover}. 

For $\alpha \in \mathbb{Z}$, define the weighting function $u_{\alpha, A} : \mathbb{Z} \to \RR^+$ 
by $u_{\alpha, A} (j) = |\det (A) |^{j \alpha}$. 
Then, by \cite[Theorem 5.6]{fuehr_classification}, 
for $p, q \in [1,\infty]$, 
the decomposition space $\mathcal{D} (\mathcal{Q}_{A^T}, L^p, \ell^q_{v_{\alpha, A}})$ 
 coincides, up to suitable identification, with the (homogeneous) anisotropic Besov space
\[
\dot{B}^{p,q}_{\alpha} (\RR^d; A) := \bigg\{ f \in \mathcal{S}' (\RR^d) \; : \; 
\big\| \big( \| f \ast \varphi_j \|_{L^p} )_{j \in \mathbb{Z}} \big\|_{\ell^q_{u_{\alpha, A}}} < \infty \bigg\},
\]
where $\varphi_j := |\det (A)|^j \varphi (A^j \cdot)$ for some $\varphi \in \mathcal{S} (\RR^d)$ 
satisfying $\supp \widehat{\varphi} \subset [-1,1]^d \setminus \{0\}$ and
$\sum_{j \in \mathbb{Z}} \big|\widehat{\varphi} ((A^T)^j \xi ) \big| > 0$ for all $\xi \in \RHat^d \setminus \{0\}$. 
Thus, by means of Theorem \ref{thm:identification}, the anisotropic Besov space 
$\dot{B}^{p,q}_{\alpha} (\RR^d; A)$ can be canonically identified
as a coorbit space $\Co(L^{p,q}_v (\RR^d \rtimes \langle A \rangle))$.

We will next detail the classification results
obtained in \cite{fuehr_classification} to state similar results for the coorbit spaces. 
For this, define an \emph{$A$-homogeneous quasi-norm}  as a Borel map
$\rho_{A} : \RHat^d \to [0, \infty)$ that is positive definite, $A$-homogeneous, i.e., 
$\rho_A (A x) = |\det (A)| \rho_A (x)$, and satisfies  the quasi-triangle inequality
$\rho_A (x+y) \lesssim (\rho_A (x) + \rho_A (y))$. As usual, two quasi-norms
$\rho_A, \rho_B$ are called \emph{equivalent} if $\rho_{A} \asymp \rho_{B}$. 
In this terminology, \cite[Theorem 5.10]{fuehr_classification} asserts
that, given two expansive matrices $A_1, A_2 \in \mathrm{GL}(d, \mathbb{R})$, 
it holds that
$
\dot{B}^{p,q}_{\alpha} (\RR^d; A_1) = \dot{B}^{p,q}_{\alpha} (\RR^d; A_2)
$
for all $p, q \in [1,\infty]$ if, and only if, 
$\rho_{A_1^T}$ and $\rho_{A_2^T}$
are equivalent. As a direct consequence, the corresponding coorbit spaces
coincide, i.e., 
\[
\Co(L^{p,q}_v (\RR^d \rtimes \langle A_1 \rangle))
= \Co(L^{p,q}_v (\RR^d \rtimes \langle A_2 \rangle))
\]
for all $p, q \in [1,\infty]$ if, and only if, the equivalence
$\rho_{A_1^T} \asymp \rho_{A_2^T}$ holds. 

For explicit and checkable criteria for the equivalence of homogeneous quasi-norms, 
the interested reader is referred to \cite[Section 10]{bownik2003anisotropic} and \cite[Section 6]{fuehr_classification}, with the caveat that the latter source corrects some fallacies contained in the earlier one. 
\end{example}

We next  comment on coorbit spaces associated to one-parameter subgroups in more detail. 
The following result formulates a criterion for the integrable admissibility of these groups, 
it is \cite[Theorem 1.1]{grochenig1992compact} rephrased in the terminology used in this paper. 

\begin{proposition} \label{prop:one-parameter}
 Let $A \in \mathbb{R}^{d \times d}$. The associated one-parameter group $H = \exp(\mathbb{R} A)$ is integrably admissible if, and only if, either the real parts of all eigenvalues are strictly positive or strictly negative. The essential frequency support associated to $H$ is given by $\mathcal{O} = \mathbb{R}^d \setminus \{ 0 \}$. 
\end{proposition}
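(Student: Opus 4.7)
The plan is to translate the classical result of \cite{grochenig1992compact} into the framework of Definition \ref{def:integrably_admissible}, using Lemma \ref{lem:characterization_integrably} as the primary verification tool. For $H = \exp(\RR A)$, the dual action on $\RHat^d$ is the linear flow $\Phi_t(\xi) := \exp(tA^T)\xi$, and since $\exp(\RR A) = \exp(\RR(-A))$, the eigenvalue condition is symmetric under $A \mapsto -A$; for the sufficiency direction I may therefore assume all eigenvalues of $A^T$ have strictly positive real parts.

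For sufficiency, I would invoke the classical Lyapunov construction: under the positivity assumption, there is an inner product $\langle\cdot,\cdot\rangle_*$ on $\RR^d$ and a constant $\lambda>0$ with
\[
\frac{d}{dt}\bigl\|\Phi_t\xi\bigr\|_*^2 \;\geq\; 2\lambda \bigl\|\Phi_t\xi\bigr\|_*^2
\]
for all $\xi\in\RR^d$. Consequently $t\mapsto\|\Phi_t\xi\|_*$ is strictly increasing on each orbit, tends to $0$ as $t\to-\infty$ and to $\infty$ as $t\to+\infty$ for every $\xi\neq 0$. Setting $\mathcal{O} := \RHat^d \setminus \{0\}$ and $C := \{\xi : 1 < \|\xi\|_* < e\}$ yields an open, relatively compact subset of $\mathcal{O}$ with $\overline{C}\subset\mathcal{O}$ and, by strict monotonicity, $H^T C = \mathcal{O}$. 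Moreover, $h=\exp(tA)$ lies in $((C,C))$ only if some $\xi\in\overline{C}$ satisfies $\Phi_t\xi\in\overline{C}$, and then both $\|\xi\|_*$ and $\|\Phi_t\xi\|_*$ lie in $[1,e]$; the differential inequality then forces $|t|\leq 1/\lambda$, so $((C,C))$ is relatively compact in $H$. Lemma \ref{lem:characterization_integrably} then yields integrable admissibility with essential frequency support $\RHat^d\setminus\{0\}$.

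For necessity, suppose $H$ is integrably admissible but the eigenvalue hypothesis fails, so $A^T$ has an eigenvalue with zero real part or eigenvalues of both signs. In the first case, the associated (generalized) real eigenspace $V\subseteq\RR^d$ is $H^T$-invariant, and the Jordan decomposition shows the flow restricted to $V$ grows and decays at most polynomially; one then constructs sequences $\xi_n\in V\setminus\{0\}$ and $t_n\to\infty$ such that $\Phi_{t_n}\xi_n$ and $\xi_n$ both lie in a fixed compact subset of $\mathcal{O}$, contradicting the properness requirement of Definition \ref{def:integrably_admissible}(ii). If both signs occur, a similar sequence can be produced by choosing $\xi_n$ in the stable subspace, with $\Phi_{t_n}\xi_n$ staying bounded while $\exp(t_n A)$ exits every compact set of $H$. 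The main obstacle lies in this necessity direction, specifically in handling non-diagonalizable blocks and the central eigenspace carefully so that the polynomial factors from Jordan form analysis do not allow accidental return of orbits into compact cross-sections; these delicate estimates are precisely the content of \cite[Theorem 1.1]{grochenig1992compact}, which the proposition essentially rephrases.
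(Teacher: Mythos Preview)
The paper does not actually prove this proposition: it is introduced as ``\cite[Theorem 1.1]{grochenig1992compact} rephrased in the terminology used in this paper'' and stated without proof. Your proposal therefore goes well beyond what the paper does, supplying a direct verification via Lemma \ref{lem:characterization_integrably} rather than deferring entirely to the cited reference.

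Your sufficiency argument is clean and correct: the Lyapunov inner product gives strict monotonicity of $t\mapsto\|\Phi_t\xi\|_*$, from which both $H^T C=\mathcal{O}$ and the relative compactness of $((C,C))$ follow transparently. This is a nice self-contained route that the paper does not take.

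Your necessity sketch has the right structure but is imprecise in one place. In the ``mixed signs'' case you write that one chooses $\xi_n$ \emph{in} the stable subspace; but if $\xi_n$ lies exactly in the stable subspace then $\Phi_{t_n}\xi_n\to 0\notin\mathcal{O}$, so the pair $(\xi_n,\Phi_{t_n}\xi_n)$ need not remain in any compact subset of $\mathcal{O}\times\mathcal{O}$, and properness is not violated. The correct construction takes $\xi_n$ \emph{near} the stable subspace (with a small unstable component), so that a suitable $t_n\to\infty$ sends $\xi_n$ to a point near the unstable subspace, with both endpoints trapped in a fixed compact annulus in $\mathcal{O}$. Concretely, for $A=\mathrm{diag}(1,-1)$ one may take $\xi_n=(1/n,1)$ and $t_n=\log n$, giving $\Phi_{t_n}\xi_n=(1,1/n)$; both lie in a common compact set bounded away from the origin while $t_n\to\infty$. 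Since you ultimately defer the necessity details to \cite{grochenig1992compact} anyway, this is a minor wording issue rather than a genuine gap.
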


The condition on $A \in \mathbb{R}^{d \times d}$ formulated in the proposition is equivalent to saying that the eigenvalues of $\exp(A)$ either all have modulus $< 1$ or all have modulus $>1$, which is precisely the condition of {\em expansiveness} underlying the construction of anisotropic Besov spaces according to \cite{bownik2003anisotropic}. This raises the very natural question how the coorbit spaces associated to $\exp(\mathbb{R} A)$ are related to those associated to $\langle \exp(A) \rangle$, i.e., to (homogeneous) anisotropic Besov spaces associated to $\exp(A)$. Since $\langle \exp(A) \rangle \subset \exp(\mathbb{R} A)$, with compact quotient, this question is a special case of the following somewhat more general observation. 

\begin{lemma} \label{lem:cocompact}
 Let $H_1 \leq H_2 \leq GL(d,\mathbb{R})$ be two closed subgroups such that $H_2/H_1$ compact. 
 Suppose that $H_2$ is integrably admissible with essential frequency support $\mathcal{O} \subset \RHat^d$. 
 Then the following assertions hold:
 \begin{enumerate}[(i)]
 \item The group  $H_1 \leq \mathrm{GL}(d, \mathbb{R})$ is integrably admissible with essential frequency support $\mathcal{O}$. 
 \item The groups $H_1, H_2 \leq \mathrm{GL}(d, \mathbb{R})$ induce the same coorbit spaces in the following sense:
 
   If
  $v_1, v_2$ are admissible weights on $\mathbb{R}^d \rtimes H_1$ respectively $\mathbb{R}^d \rtimes H_2$,
  depending only on the second variable,  
  such that $v_1$ is equivalent to the restriction $v_2|_{\mathbb{R}^d \rtimes H_1}$, then
 \[ \Co(L^{p,q}_{v_1}(\mathbb{R}^d \rtimes H_1 )) = \Co(L^{p,q}_{v_2}(\mathbb{R}^d \rtimes H_2)) \] 
 for all $1 \le p,q \le \infty$. 
 \end{enumerate}
\end{lemma}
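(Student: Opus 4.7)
The plan is to verify part (i) directly from Definition~\ref{def:integrably_admissible} using the inclusion $H_1 \subseteq H_2$ and the cocompactness of the quotient, and then to derive part (ii) by realizing both coorbit spaces as the same decomposition space via Theorem~\ref{thm:identification}.

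For part (i), openness, co-nullness, and $H_1^T$-invariance of $\mathcal{O}$ are immediate from the hypothesis. Properness of the $H_1$-action on $\mathcal{O}$ follows by restriction from the proper $H_2$-action: for any compact $K \subseteq \mathcal{O}$, the set $\{(h, \xi) \in H_1 \times \mathcal{O} : (h^T \xi, \xi) \in K \times K\}$ is a closed subset of the compact analogue for $H_2$, hence compact. For the third condition, the compactness of $H_2/H_1$ gives a compact section $L \subseteq H_2$ with $H_2 = L H_1$, so that $H_2^T = H_1^T L^T$ and thus $\mathcal{O} = H_2^T C_2 = H_1^T (L^T C_2)$ for any compact generator $C_2$ of $\mathcal{O}$ under the $H_2^T$-action.

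For part (ii), I pick a well-spread family $(h_i)_{i \in I} \subseteq H_1$ and show that it remains well-spread in $H_2$. $U$-denseness transfers because a compact section $L' \subseteq H_2$ with $H_2 = H_1 L'$ gives $H_2 = \bigcup_i h_i U L'$. $V$-separation transfers because $V V^{-1}$ is open in $H_1$ with the subspace topology, so it has the form $\tilde V \cap H_1$ for some open $\tilde V \subseteq H_2$; continuity of multiplication then yields an open $W \subseteq H_2$ with $W W^{-1} \subseteq \tilde V$, and the inclusion $h_i^{-1} h_j \in H_1$ for $i \neq j$ forces $h_i^{-1} h_j \notin W W^{-1}$. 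Taking $Q := U^{-T} C_1$ as in Lemma~\ref{lem:existence_covering}, the cover $\mathcal{Q} := (h_i^{-T} Q)_{i \in I}$ is simultaneously induced by $H_1$ and by $H_2$. The associated discrete weights satisfy $u^{(j)}_i \asymp |\det(h_i)|^{1/2 - 1/q} v_j(h_i)$ for $j \in \{1,2\}$, and since $h_i \in H_1$ and $v_1 \asymp v_2|_{H_1}$, these weights are equivalent. Two applications of Theorem~\ref{thm:identification} therefore give
\[
\Co(L^{p,q}_{v_1}(\mathbb{R}^d \rtimes H_1)) = \mathcal{D}(\mathcal{Q}, L^p, \ell^q_{u^{(1)}}) = \mathcal{D}(\mathcal{Q}, L^p, \ell^q_{u^{(2)}}) = \Co(L^{p,q}_{v_2}(\mathbb{R}^d \rtimes H_2))
\]
with equivalent norms.

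The main obstacle is the transfer of the well-spread property from $H_1$ to $H_2$: $U$-denseness is a straightforward consequence of cocompactness, but the $V$-separation step requires the subtler observation that one can shrink a neighborhood in $H_2$ so that its translates remain pairwise disjoint, combining the subspace topology on $H_1$ with the continuity of multiplication in $H_2$. Once this is in place, the identification of each coorbit space with the common decomposition space $\mathcal{D}(\mathcal{Q}, L^p, \ell^q_u)$, and the invariance of the decomposition norm under passage to an equivalent weight, are routine.
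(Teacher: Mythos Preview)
Your proposal is correct and follows essentially the same route as the paper: both establish (i) by pushing the compact generator of $\mathcal{O}$ through a compact cross-section of $H_2/H_1$, and both derive (ii) by identifying each coorbit space with a common decomposition space via Theorem~\ref{thm:identification}, using a single induced covering built from a well-spread family $(h_i)_{i\in I}\subset H_1$ that remains well-spread in $H_2$. The paper simply asserts this last transfer without justification, whereas you supply the separation argument; one cosmetic point is that to invoke Theorem~\ref{thm:identification} you need $\mathcal{Q}$ to admit a BAPU, so it is cleanest to take $\mathcal{Q}$ from Theorem~\ref{thm:Q-BAPU} (applied to $H_1$) rather than from Lemma~\ref{lem:existence_covering}.
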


\begin{proof}
If $H_2/H_1$ is compact, then there exists a compact set $K \subset H_2$ with $KH_1 = H_2$. Then, if $C \subset \mathcal{O}$ is a relatively compact open subset with $H_2 C = \mathcal{O}$ and 
$((C,C)) \subset H_2$ compact, it is easy to see that $KC \subset \mathcal{O}$ is relatively compact and open as well, with $((KC,KC)) \subset H_1$ compact, and $H_1 K C = H_2 C = \mathcal{O}$. 
This proves (i). 

In order to show (ii), we make use of their decomposition space realization.  
Furthermore, if $(h_i)_{i \in I} \subset H_1$ is well-spread, it is also a well-spread subset of $H_2$.

By assumption on $v_1$ and $v_2$, transferring either $v_1$ or $v_2$ to the induced covering results in equivalent weights on the covering, and therefore the induced decomposition spaces coincide. Theorem \ref{thm:identification} now yields the desired conclusion. 
\end{proof}

\begin{remark}
In the setting of the previous lemma, 
it is easy to see that restricting an admissible weight from $G_2$ to $G_1$ yields an admissible weight on the smaller group. It is less clear whether every admissible weight on the smaller group extends (up to equivalence) to the larger one. For weights only depending on the $H_i$-variable, some partial answers are available. For instance, if $H_2$ is a abelian, or a direct product of $H_1$ and a second (necessarily compact) subgroup, such an extension is easily constructed. In this case, since $H_2/H_1$ is assumed compact, there exists a measurable cross-section $\sigma: H_2/H_1 \to H_2$ with relatively compact image and with $\sigma(h) = e$ for all $h \in H_1$. Here one checks immediately  $v_2(h) = v_1(\sigma(h)^{-1} h)$ yields an extension of $v_1$ to $H_2$ with the desired properties.  
\end{remark}

\begin{example} \label{ex:threegroups}
 Consider the group $H$ described in Example \ref{ex:integrablyadmissible}(\ref{ex:two-parameter}), one has 
 $H_1 \subset H \subset H_3$, where $H_3$ is the diagonal group, and $H_1 = \exp(\mathbb{R} C)$ with 
 \[
  C= \left[ \begin{array}{ccc} 1 && \\ & 1&\\&&\alpha + \beta\end{array} \right]~.
 \] Hence, $H_1, H_3$ are both integrably admissible. 
 However, $H/H_1 \cong \mathbb{R} \cong H_3/H$, hence Lemma \ref{lem:cocompact} is not applicable,
 and we expect that the coorbit spaces associated to $H$ are different from those associated to either $H_1$ or $H_3$. Moreover, unlike for co-compactly contained pairs of groups, the essential frequency supports of $H_1, H_3$ on the one hand, and of $H$ on the other, are different. 
\end{example}

We expect that a more direct proof of Lemma \ref{lem:cocompact}, relating integration over $H_2$ to integration over $H_1$ by Weil's integral formula, is also available.  However, the proof using the decomposition space identification is remarkably effortless. Note also that already the formulation of the lemma is greatly facilitated by the choice of a common reservoir. In the absence of a common reservoir,  comparing coorbit spaces associated to different groups requires making potentially cumbersome identifications.

\begin{corollary} \label{cor:coorbit_besov}
Let $H = \exp(\mathbb{R} A)$ denote an integrably admissible one-parameter group. Then the coorbit spaces induced by $G = \mathbb{R}^d \rtimes H$ are precisely the anisotropic Besov spaces associated to $\exp(A)$.  
\end{corollary}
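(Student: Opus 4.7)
The plan is to apply Lemma \ref{lem:cocompact} to the pair $H_1 := \langle \exp(A) \rangle \leq H_2 := \exp(\mathbb{R} A) = H$, and then combine the resulting identification $\Co(L^{p,q}_{v_2}(\mathbb{R}^d \rtimes H_2)) = \Co(L^{p,q}_{v_1}(\mathbb{R}^d \rtimes H_1))$ with the identification of $\Co(L^{p,q}_{v_1}(\mathbb{R}^d \rtimes H_1))$ with the anisotropic Besov space $\dot{B}^{p,q}_\alpha(\mathbb{R}^d; \exp(A))$ established in the preceding example.

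To verify the hypotheses of Lemma \ref{lem:cocompact}, I first check that $H_2/H_1$ is compact. The exponential map $t \mapsto \exp(tA)$ is a continuous surjective homomorphism $\mathbb{R} \to H_2$ whose kernel is a closed discrete subgroup of $\mathbb{R}$, and $H_1$ is the image of $\mathbb{Z}$ under this map, so $H_2/H_1$ is a continuous image of the compact group $\mathbb{R}/\mathbb{Z}$. By Proposition \ref{prop:one-parameter}, the integrable admissibility of $H_2$ forces the real parts of the eigenvalues of $A$ to have constant nonzero sign, and hence either $\exp(A)$ or its inverse is expansive; by the preceding example this also makes $H_1$ integrably admissible (this is also implied by Lemma \ref{lem:cocompact}(i)), with common essential frequency support $\mathcal{O} = \mathbb{R}^d \setminus \{0\}$.

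Next, I will match up the admissible weights on the two sides. The weights arising in the Besov identification are, up to equivalence, transplants of the submultiplicative weight $v_1^{\alpha}: H_1 \to \mathbb{R}^+$, $\exp(A)^j \mapsto |\det (\exp(A))|^{j\alpha}$. Since $H_2$ is abelian, $v_1^\alpha$ extends to the continuous submultiplicative weight $v_2^{\alpha}(\exp(tA)) := \exp\bigl(t\alpha \cdot \mathrm{tr}(A)\bigr)$ on $H_2$, whose trivial extension to $G_2 = \mathbb{R}^d \rtimes H_2$ is admissible (with $s = 0$) and restricts to $v_1^\alpha$ on $H_1$. Conversely, restriction of any admissible weight on $G_2$ depending only on the $H_2$-variable yields an admissible weight on $G_1$. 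Lemma \ref{lem:cocompact}(ii) therefore applies and gives, for all $p,q \in [1,\infty]$,
\[
\Co\bigl(L^{p,q}_{v_2^\alpha}(G_2)\bigr) = \Co\bigl(L^{p,q}_{v_1^\alpha}(G_1)\bigr),
\]
and the right-hand side coincides with $\dot{B}^{p,q}_\alpha(\mathbb{R}^d; \exp(A))$ by the preceding example. Conversely, every coorbit space of $G$ with a weight depending only on the $H_2$-variable is realized on $G_1$ via restriction and is thus a Besov space, establishing the stated equality of families.

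The only mildly technical point is the bookkeeping of admissible weights across the inclusion $H_1 \leq H_2$, specifically ensuring that extension from $H_1$ to $H_2$ preserves admissibility and that the transplanted weights on induced coverings remain equivalent under the two choices of group; both are routine thanks to the abelian structure of $H_2$ and compactness of $H_2/H_1$, as already observed in the remark following Lemma \ref{lem:cocompact}.
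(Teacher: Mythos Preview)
Your proposal is correct and follows exactly the approach the paper intends: the corollary is stated without proof immediately after Lemma~\ref{lem:cocompact}, and the paper has already remarked (just before the lemma) that $\langle \exp(A) \rangle \subset \exp(\mathbb{R} A)$ with compact quotient, so the result is meant to follow by applying Lemma~\ref{lem:cocompact} to this pair together with the Besov identification from the preceding example. Your additional verification of the weight extension is precisely what the remark following Lemma~\ref{lem:cocompact} addresses in the abelian case.
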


\begin{remark}
There is a converse to Corollary \ref{cor:coorbit_besov}. Given any expansive matrix $A$, there exists a second expansive matrix $B$ with the property that $A$ and $B$ induce the same scales of anisotropic Besov spaces, and in addition, $B = \exp(C)$ for a suitable matrix $C$; see \cite[Lemma 7.8]{fuehr_classification}. Thus the classes of anisotropic Besov spaces on the one hand, and of coorbit spaces associated to one parameter groups on the other, coincide precisely. 
\end{remark}

The following result is well-known, and was already stated in \cite{feichtinger1988unified}. 
\begin{corollary}
 Let $H = \mathbb{R}^+\cdot SO(d)$. Then the coorbit spaces of $H$ and $ \mathbb{R}^+\cdot {\rm Id}_{\mathbb{R}^d}$ coincide. 
\end{corollary}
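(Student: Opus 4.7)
The plan is to deduce the corollary as a direct application of Lemma \ref{lem:cocompact}, with $H_1 = \mathbb{R}^+ \cdot \mathrm{Id}_{\mathbb{R}^d}$ and $H_2 = \mathbb{R}^+ \cdot SO(d)$. Thus the work reduces to verifying the three ingredients of that lemma: integrable admissibility of the larger group, compactness of the quotient $H_2 / H_1$, and compatibility of admissible weights under restriction/extension.

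First I would verify that $H_2 = \mathbb{R}^+ \cdot SO(d)$ is integrably admissible. This is the classical similitude group, and its dual action on $\RHat^d$ has a single open, co-null orbit $\mathcal{O} = \RHat^d \setminus \{0\}$, generated by any nonzero point $\xi_0 \in \RHat^d$, whose stabilizer $(H_2)_{\xi_0} \cong SO(d-1)$ is compact. Hence $H_2$ is irreducibly admissible, and by Example \ref{ex:integrablyadmissible}(a), integrably admissible with essential frequency support $\mathcal{O} = \RHat^d \setminus \{0\}$.

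Second, the inclusion $H_1 \leq H_2$ is obvious, and the polar decomposition $rR \mapsto R$ (with $r \in \mathbb{R}^+$, $R \in SO(d)$) identifies the quotient $H_2 / H_1$ with $SO(d)$, which is compact. In fact $H_2 \cong H_1 \times SO(d)$ as a direct product, which places us exactly in the favourable case highlighted in the remark following Lemma \ref{lem:cocompact}: any admissible weight $v_1 : G_1 \to \mathbb{R}^+$ on $G_1 = \mathbb{R}^d \rtimes H_1$ depending only on the $H_1$-variable extends to an admissible weight $v_2$ on $G_2 = \mathbb{R}^d \rtimes H_2$ via $v_2(x, rR) := v_1(x, r)$, and its restriction to $G_1$ recovers $v_1$. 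Conversely, restricting an admissible weight on $G_2$ that depends only on the $\mathbb{R}^+$-component produces an admissible weight on $G_1$ compatible in the sense of Lemma \ref{lem:cocompact}(ii).

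The conclusion then follows directly by applying Lemma \ref{lem:cocompact}(ii) to obtain
\[
\Co\bigl(L^{p,q}_{v_1}(\mathbb{R}^d \rtimes H_1)\bigr) = \Co\bigl(L^{p,q}_{v_2}(\mathbb{R}^d \rtimes H_2)\bigr)
\]
for every $p, q \in [1, \infty]$. I do not expect any genuine obstacle, since all the analytic content has been packaged into Lemma \ref{lem:cocompact} via the decomposition space identification in Theorem \ref{thm:identification}; the only point that could require a line of care is the remark that $H_2 = H_1 \times SO(d)$ so that the weight compatibility hypothesis can be arranged canonically, and that the open orbit $\mathcal{O} = \RHat^d \setminus \{0\}$ is the common essential frequency support for both groups.
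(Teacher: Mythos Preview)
Your proposal is correct and takes essentially the same approach as the paper: the corollary is stated without proof, but it is positioned immediately after Lemma \ref{lem:cocompact} and Corollary \ref{cor:coorbit_besov} as a further direct consequence, with $H_1 = \mathbb{R}^+ \cdot \mathrm{Id}_{\mathbb{R}^d}$ cocompactly contained in the irreducibly admissible similitude group $H_2 = \mathbb{R}^+ \cdot SO(d)$. Your verification of the hypotheses, including the direct product structure $H_2 \cong H_1 \times SO(d)$ to handle weight compatibility as in the remark following Lemma \ref{lem:cocompact}, is exactly what the paper has in mind.
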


 We sketch a further potential benefit of the ability to switch groups in the description of coorbit spaces. One natural problem arising in the study of coorbit spaces is the question which dilation matrices $g \in {\rm GL}(d,\mathbb{R})$ leave the coorbit spaces invariant. It is shown in \cite{fuehr2015coorbit} that this set of matrices may well depend on the choice of underlying dilation group $H$, but a deeper understanding of this dependence is currently missing. By construction of the coorbit spaces, it is clear that the elements of $H$ leave coorbit spaces associated to $H$ invariant, and more generally, natural candidates are the elements of the {\em normalizer} 
 $N(H)$ of $H$ defined by
 \[
  N(H) = \{ g \in GL(d,\mathbb{R}) : g^{-1} H g = H \}.
 \] 
The following is shown by straigthforward calculations. 
 
 \begin{lemma}
 Let $H \leq \mathrm{GL}(d, \mathbb{R})$ be integrably admissible 
 and let $v : G \rtimes H \to \mathbb{R}^+$ be an admissible weight, depending only on the second variable.
 Then, for any $g \in N(H)$, the action
 \[ \pi(0, g)  : \Co(L^{p,q}_v(\mathbb{R}^d \rtimes H)) 
  \to \Co(L^{p,q}_{v_g} (\mathbb{R}^d \rtimes H)) \] 
  is an isomorphism, where $v_g(h) := v( ghg^{-1})$. 
\end{lemma}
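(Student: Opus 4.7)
The plan is a direct computation: derive the conjugation identity for the quasi-regular representation, use it to express $W_\psi(\pi(0,g) f)$ in terms of a wavelet transform of $f$ with transformed arguments, and then collapse the $L^{p,q}_{v_g}$-norm via successive changes of variable in $\RR^d$ and in $H$.

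First I would establish the key conjugation identity. Extending $\pi$ by the same formula to $\RR^d \rtimes N(H)$, the composition rule $(0,g^{-1})(x,h) = (g^{-1}x,\, g^{-1}h)$ combined with the factorization $g^{-1}h = (g^{-1}hg)\,g^{-1}$, valid since $g \in N(H)$ implies $g^{-1}hg \in H$, yields
\[
\pi(0,g^{-1})\,\pi(x,h) \;=\; \pi\bigl(g^{-1}x,\, g^{-1}hg\bigr)\,\pi(0, g^{-1}).
\]
Substituting into $W_{\psi}(\pi(0,g) f)(x,h) = \langle f,\,\pi(0,g^{-1})\pi(x,h)\psi\rangle$ gives the key formula
\[
W_{\psi}(\pi(0,g) f)(x, h) \;=\; W_{\tilde\psi} f\bigl(g^{-1}x,\, g^{-1}hg\bigr), \qquad \tilde\psi := \pi(0,g^{-1})\psi.
\]

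Second I would verify that $\tilde\psi$ is again admissible and lies in $\mathcal{S}_{\mathcal{O}}(\RR^d)$. Its Fourier support is $g^T\supp\widehat\psi$, and since $g^T$ normalizes $H^T$, the image $g^T\mathcal{O} = H^T(g^T C)$ is again open, conull, and $H^T$-invariant. Under the (mild) topological condition $g^T \mathcal{O} = \mathcal{O}$ — arranged either by choosing $\psi$ with $g^T \supp\widehat\psi \subseteq \mathcal{O}$, or by enlarging the essential frequency support — one has $\tilde\psi \in \mathcal{S}_{\mathcal{O}}(\RR^d)$, and the Calderón identity \eqref{eq:Calderon} transfers from $\psi$ to $\tilde\psi$ via the substitution $h \mapsto h g^T$, up to a normalization factor.

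Finally, the norm equivalence follows from two successive substitutions in the integral defining $\|W_\psi(\pi(0,g)f)\|_{L^{p,q}_{v_g}}$: the change of variable $x \mapsto g x$ in the inner $L^p$-integral contributes a factor $|\det g|^{1/p}$, while $h \mapsto g h' g^{-1}$ is a topological group automorphism of $H$ with constant modulus $c_g>0$, preserves $|\det h|$, and matches $v_g$ with $v$ by the very definition of $v_g$ in terms of the conjugation action. These combined substitutions yield
\[
\|\pi(0,g) f\|_{\Co(L^{p,q}_{v_g})} \;\asymp\; \|W_{\tilde\psi} f\|_{L^{p,q}_{v}} \;\asymp\; \|f\|_{\Co(L^{p,q}_v)},
\]
the last equivalence by Proposition~\ref{prop:coorbit_basic}(ii). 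Running the same argument for $g^{-1} \in N(H)$, whose associated weight recovers $v$, produces a bounded inverse, completing the isomorphism. The only genuine obstacle is the second step: verifying $\tilde\psi \in \mathcal{S}_{\mathcal{O}}(\RR^d)$ is a topological rather than measure-theoretic condition, requiring that conjugation by $g^T$ preserve the essential frequency support, cf.\ the remark following Definition~\ref{def:integrably_admissible}.
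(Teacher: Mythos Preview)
The paper does not actually supply a proof of this lemma; it merely remarks that it ``is shown by straightforward calculations.'' Your outline is precisely the natural way to carry out those calculations: the conjugation identity for $\pi$, the resulting formula $W_\psi(\pi(0,g)f)(x,h) = W_{\tilde\psi}f(g^{-1}x,\,g^{-1}hg)$, and the two changes of variable in the $L^{p,q}$-norm are exactly what one expects. You also correctly isolate the one point that is not purely mechanical, namely that $\tilde\psi = \pi(0,g^{-1})\psi$ must again lie in $\mathcal{S}_{\mathcal{O}}(\RR^d)$, which amounts to $g^T\mathcal{O} = \mathcal{O}$; this is the topological subtlety the paper itself flags in the remark following Definition~\ref{def:integrably_admissible}.

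One bookkeeping caution: with the paper's stated definition $v_g(h) = v(ghg^{-1})$, the substitution $h = gh'g^{-1}$ yields $v_g(gh'g^{-1}) = v(g^2h'g^{-2})$, not $v(h')$. The calculation closes cleanly if one instead takes $v_g(h) = v(g^{-1}hg)$, so either the conjugation in the paper's definition is reversed or the labeling of source and target is. This does not affect the substance of your argument --- the map is still an isomorphism between the two coorbit spaces --- but you should track the direction carefully when writing it out.
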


Once the role of the normalizer is understood, a benefit of Lemma \ref{lem:cocompact} becomes apparent, that is best exemplified with the help of the groups $H_1 = \mathbb{R}^+ \cdot {\rm Id}_{\mathbb{R}^d}$ and $H_2 = \mathbb{R}^+ \cdot SO(d)$. Trying to use the normalizer of $H_2$ to find additional dilational symmetries of the associated coorbit spaces yields very little, since
$N(H_2) $ turns out to be a finite extension of $H_2$. 
By contrast, since $H_1$ is central in $GL(d,\mathbb{R})$, one has $N(H_1) = GL(d,\mathbb{R})$, and in addition, $v_g = v$ for every weight on $H_1$ and every $g \in GL(d,\mathbb{R})$. Thus computing the normalizer of $H_1$ is sufficient to see that the isotropic homogeneous Besov spaces are invariant under arbitrary dilations. 

\section*{Conclusion and outlook}
The decomposition space description of coorbit spaces can be understood as a generalization of the $\varphi$-transform characterization of Besov spaces due to Frazier and Jawerth \cite{Frazier1985Besov, Frazier1990Triebel}. As outlined in \cite{fuehr2015wavelet}, the chief benefit of this description comes from the fact that it provides a natural common framework for the unified treatment of spaces associated to different groups, and the papers \cite{voigtlaender_embeddings,voigtlaender_Sobolev} provide tools to tackle the question of embeddings between decomposition spaces, but also of decomposition spaces into Sobolev and BV spaces, in a systematic manner. As one keeps introducing new classes of function spaces, one fundamental question becomes particularly relevant, namely that of {\em classification}, e.g., the problem of deciding whether two different dilation groups actually induce different scales of coorbit spaces. On the decomposition space level, 
this question is settled in \cite{voigtlaender_embeddings}. This result was used for the classification of anisotropic Besov spaces in \cite{fuehr_classification}, and of shearlet coorbit spaces in \cite{koch_thesis}. Furthermore, the latter develops an interesting connection between decomposition space theory and  coarse geometry \cite{MR2007488}, which will be further pursued in upcoming publications. In this paper, a first glimpse of coarse geometric reasoning can be seen in the discussion in Section \ref{sec:examples}, in particular in connection with Lemma \ref{lem:cocompact} and its applications. We expect that the question whether a converse to this lemma holds, e.g. as described in Example \ref{ex:threegroups}, can be ultimately determined using coarse geometry methods. Likewise, it seems plausible to us that the question whether the essential frequency support of an integrably admissible dilation group is unique is ultimately a coarse geometric one. 

It should also be emphasized that, while the coorbit theory developed here is ultimately a branch of decomposition space theory, the focus on this class allows the formulation of sharper results. This can be exemplified by the discretization results such as Theorem \ref{thm:atomic}, which clearly benefit from the underlying group. Further cases in point are provided by the discussion of dilational symmetries in Section \ref{sec:examples}, and by the
classification results in \cite{koch_thesis}, which also make crucial use of the group structure underlying the construction of the coorbit spaces.

An interesting question for future work regards explicit criteria and existence proofs for atomic decompositions using vectors that are compactly supported in space rather than in frequency. For the irreducible case, these were obtained in \cite{fuehr2016vanishing,MR3683671}. The techniques developed for proving the existence of such wavelets in the irreducible case are already fairly involved, and they make heavy use of the fact that the dual action has a unique open orbit. It is therefore expected that the extension to integrably admissible dilation groups 
is highly non-trivial.

\end{document}